\newtheorem{theor}{Theorem}[section]
\newtheorem{cor}[theor]{Corollary}
\newtheorem{lemma}[theor]{Lemma}
\newtheorem{prop}[theor]{Proposition}
\newtheorem{question}{Question}
\newtheorem*{question*}{Question}
\theoremstyle{definition}
\newtheorem{defn}{Definition}
\theoremstyle{remark}
\newtheorem{remark}[theor]{Remark}
\numberwithin{equation}{section}
\numberwithin{defn}{section}
\newcommand{\N}{\mathbb{N}}        
\newcommand{\R}{\mathbb{R}}        
\renewcommand{\S}{\mathbb{S}}        
\renewcommand{\H}{\mathcal{H}}
\newcommand{\Hut}{H^1_{\mathrm{tan}}(N, \, \mathbb{S}^2)}
\newcommand{\Hu}{H^1(N, \, \R^3)}
\DeclareMathOperator{\infess}{ess\,inf} 
\DeclareMathOperator{\Id}{Id}
\DeclareMathOperator{\dist}{dist}
\DeclareMathOperator{\VMO}{VMO}
\DeclareMathOperator{\BMO}{BMO}
\DeclareMathOperator{\ind}{ind}
\DeclareMathOperator{\proj}{proj}
\renewcommand{\d}{\mathrm{d}}
\newcommand{\abs}[1]{\left| #1 \right|}
\newcommand{\norm}[1]{\left\| #1 \right\|}
\newcommand{\csubset}{\subset\!\subset}
\title[Morse's index formula in VMO for manifolds with boundary]
{Morse's index formula in VMO\\ for compact manifolds with boundary}
\author{Giacomo Canevari}
\address{Sorbonne Universit\'es,
        UPMC --- Universit\'e Paris 06,
        CNRS, UMR 7598, Laboratoire Jacques-Louis Lions,
        4 place Jussieu, \mbox{75005} Paris, France.}
        \curraddr{Mathematical Institute,
        University of Oxford,
        Andrew Wiles Building,
        Radcliffe Observatory Quarter,
        Woodstock Road,
        \mbox{OX2 6GG} Oxford, United Kingdom.}
\email[G. Canevari]{canevari@maths.ox.ac.uk}
\author{Antonio Segatti}
\author{Marco Veneroni}
\address{Dipartimento di Matematica ``F. Casorati'',
  Universit\`a di Pavia,
  Via Ferrata 1, \mbox{27100} Pavia, Italy.}
\email[A. Segatti]{antonio.segatti@unipv.it}
\email[M. Veneroni]{marco.veneroni@unipv.it}
\date{August 20, 2015} 
\begin{document}

\begin{abstract}
In this paper, we study Vanishing Mean Oscillation vector fields on a compact manifold with boundary. 
Inspired by the work of Brezis and Nirenberg, we construct a topological invariant --- the index --- for such fields, and establish the analogue of Morse's formula.
As a consequence, we characterize the set of boundary data which can be extended to nowhere vanishing $\VMO$ vector fields.
Finally, we show briefly how these ideas can be applied to (unoriented) line fields with $\VMO$ regularity,  thus providing a reasonable framework for modelling a surface coated with a thin film of nematic liquid crystals.
\end{abstract}


\subjclass[2010]{55M25, 57R25, 53Z05, 76A15}
\keywords{Index of a vector field, VMO degree theory, Poincar\'e-Hopf-Morse's formula, $Q$-tensors}

\maketitle

\section{Introduction}
\label{sec:intro}

The starting point of the investigations developed in this paper is the
analysis of a variational model for nematic shells. 
Nematic shells are the datum of a two-dimensional surface
(for simplicity, at a first step, without boundary) 
$N\subset\mathbb{R}^3$
coated with a thin film of nematic liquid crystal 
(\cite{KRV11, LubPro92,
NapVer12L,NapVer12E,Nelson02,
ssvARMA,ssvPRE,Straley71}).
This line of research has attracted a lot of attention from the physics
community due to its vast technological applications (see \cite{Nelson02}).
From the mathematical point of view, nematic shells offer an interesting
and nontrivial interplay between calculus of variations, partial differential 
equations, geometry and topology. 
The basic mathematical description of nematic shells
consists in an energy defined on tangent vector fields with unit length, named directors.
This energy, in the simplest situation, takes the form
 \begin{equation}
\label{eq:energy}
\mathcal{E}(n) :=\int_{N}\vert \nabla n\vert^2 \d S,
\end{equation}
where $\nabla$ stands for the covariant derivative of the surface $N$. 
If one is interested in the minimization of this energy, the 
first step is to understand whether there are competitors
for the minimization process. For this type of energy, 
the natural functional space where to look for minimizers
is the space of tangent vector fields with $H^1$ regularity.
This means,
recalling that we are looking for vector fields with unit norm,
the space defined in this way
\begin{equation}
\label{eq:Hut}
\Hut :=\left\{n \in \Hu\colon n(x)\in T_x(N) \hbox{ and } 
\vert n\vert =1 \hbox{ a.e.}  \right\}.
\end{equation}
Now, the problem turns into the 
understanding of the topological conditions on $N$,
if any, that make $\Hut$ empty or not.
Note that this problem, in the case $N=\mathbb{S}^2$,
is indeed a Sobolev version
of the celebrated hairy ball problem
concerning the existence of
a tangent
vector field with unit norm on the two-dimensional sphere.
The answer, when dealing with continuous fields, is
negative. This is a consequence of a more general result,
the Poincar\'e-Hopf Theorem, that relates
the existence of a smooth tangent vector field with unit norm to 
the topology of $N$. More precisely, a smooth vector
field with unit norm exists if and only if $\chi(N)=0$, where
$\chi$ is the Euler characteristic of $N$. 
In case $N$ is a compact surface in $\R^3$, the Euler
 characteristic can be written as a function of the topological
 genus $k$:
 \[
  \chi(N) = 2(1 - k) .
 \] 
In \cite{ssvARMA} it has been proved, using calculus
of variations tools, that the very same result holds for 
vector fields with $H^1$ regularity.
Therefore, up to diffeomorphisms, the only compact surface
in~$\R^3$ which admits a unit norm vector field in 
$H^1$ is the torus, corresponding to $k = 1$.
On the other hand, it is easy to comb the sphere with
a field $v\in W^{1, p}_{\mathrm{tan}}(\S^2, \, \S^2)$ for all $1 \leq p < 2$. 
It is interesting to note that this result could be seen
as a ``non flat'' version of a well know result of Bethuel
that gives conditions for the non emptiness of the space
$$
H^{1}_{g}(\Omega, \, \mathbb{S}^1):=\left\{ v\in H^{1}(\Omega, \, \R^2)\colon
\vert v(x)\vert =1 \hbox{ a.e. in } \Omega \,\,\hbox{ and } v\equiv g \hbox{ on } 
\partial \Omega
\right\},
$$
where $\Omega$ is a simply connected bounded domain in $\R^2$ and $g$ 
is a prescribed
smooth boundary datum with $\vert g\vert =1$. 
The non-emptiness of $H^{1}_{g}(\Omega, \, \mathbb{S}^1)$ is related
to a topological condition on the Dirichlet datum $g$ (see \cite{Bethuel96}
and \cite{BBB})
while in the result in \cite{ssvARMA} the topological constraint is on the genus of the surface.

Instead of using the standard Sobolev theory, we reformulate this
problem in the space of Vanishing Mean Oscillation (VMO) 
functions, introduced by Sarason in \cite{Sarason},  which 
constitute a special subclass of Bounded Mean Oscillations functions, defined by John and Nirenberg in \cite{JN}.
We recall the definitions and some properties of these objects in Section~\ref{sect: VMO}, 
but we immediately note that
VMO contains the critical spaces with respect to Sobolev embeddings, that is,
\begin{equation}
\label{eq:embedding}
 W^{s, p}(\R^n) \subset \VMO(\R^n) \qquad \textrm{when } sp = n , \ 1 < s < n .
\end{equation}
In a sense, $\VMO$ functions are a good surrogate for the continuous
 functions, because some classical topological constructions can
  be extended, in a natural way, to the $\VMO$ setting.
In particular, we recall here the $\VMO$ degree theory, which has been
 developed after Brezis and Nirenberg's seminal papers \cite{BN1}
  and \cite{BN2}.

Besides relaxing the regularity
on the vector field, we will consider $n$-dimensional compact and connected 
submanifolds of $\R^{n+1}$ and,
instead of fixing the length of the vector field to be $1$, 
we will look for vector fields which are bounded and
uniformly positive. 

Thus, the problem of combing a two-dimensional surface
with $H^1$ vector fields can be generalized in the following way.

\begin{question} \label{quest: comb N}
 Let $N$ be a compact, connected submanifold of $\R^{n + 1}$, without boundary, of dimension $n$. Does a vector field $v\in\VMO(N, \, \R^{n + 1})$, satisfying
 \begin{equation} \label{v}
  v(x)\in T_x N \quad \textrm{and}  \quad c_1 \leq \abs{v(x)} \leq c_2 
 \end{equation}
  for a.e. $x\in N$ and some constants $c_1, \, c_2 > 0$, exists?
\end{question}
 
 The first outcome of this paper is to provide a complete answer to Question~\ref{quest: comb N}. By means of the Brezis and Nirenberg's degree theory, we can show that the
 existence of nonvanishing vector fields in $\VMO$ is subject to the same topological obstruction as in the continuous case, that is, we prove the following
 
 \begin{prop} \label{prop: unbounded}
  Let $N$ be a compact, connected $n$-dimensional submanifold of $\R^{n + 1}$, without boundary. There exists a function $v\in\VMO(N, \, \R^{n + 1})$ satisfying~\eqref{v} if and only if $\chi(N) = 0$.
 \end{prop}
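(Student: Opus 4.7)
The \emph{if} direction is classical: when $\chi(N) = 0$, the Poincar\'e--Hopf theorem produces a smooth nowhere vanishing tangent vector field on $N$, which is automatically in $\VMO(N, \R^{n+1})$ and satisfies~\eqref{v} with suitable constants. For the converse, my plan is to regularize the given $\VMO$ field so as to produce a \emph{continuous} nowhere vanishing tangent vector field on $N$, and then invoke the classical Poincar\'e--Hopf theorem. As a preliminary reduction, given $v$ as in~\eqref{v}, the function $|v|$ belongs to $\VMO(N)$ and is bounded between $c_1$ and $c_2$, whence $1/|v|$ is $\VMO$ as well and $w := v/|v|$ is a unit-length tangent $\VMO$ vector field, meaning $w(x) \in T_x N$ and $|w(x)| = 1$ for a.e.\ $x \in N$.

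The next step is a mollification by averaging. For $\epsilon > 0$ small, let $w_\epsilon(x) := |B_\epsilon(x) \cap N|^{-1} \int_{B_\epsilon(x) \cap N} w \, \d S$, where $B_\epsilon(x) \subset \R^{n+1}$ is the Euclidean ball of radius $\epsilon$ centered at $x$. A cornerstone of Brezis and Nirenberg's $\VMO$ theory (see~\cite{BN1}, and the preliminaries promised in Section~\ref{sect: VMO}) is that, because $w$ takes values in $\S^n \subset \R^{n+1}$ almost everywhere, $|w_\epsilon| \to 1$ \emph{uniformly} on $N$ as $\epsilon \to 0^+$; in particular $w_\epsilon$ is continuous and nowhere zero for $\epsilon$ sufficiently small.

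In general $w_\epsilon$ need not be tangent, so I would correct it by orthogonal projection. Let $P_x \colon \R^{n+1} \to T_x N$ denote the projection. The a.e.\ tangency of $w$ together with the smoothness of $y \mapsto P_y$ give $(I - P_x) w(y) = (P_y - P_x) w(y) = O(|y - x|)$, and averaging yields $|w_\epsilon(x) - P_x w_\epsilon(x)| = O(\epsilon)$ uniformly in $x \in N$. Combined with $|w_\epsilon| \to 1$, this shows that $x \mapsto P_x w_\epsilon(x)$ is, for $\epsilon$ small, a continuous nowhere vanishing tangent vector field on $N$, and the classical Poincar\'e--Hopf theorem forces $\chi(N) = 0$.

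The main obstacle is the uniform length estimate $|w_\epsilon| \to 1$. This is not elementary: it is the $\VMO$-into-a-manifold statement from which Brezis and Nirenberg's degree theory grows, and it relies essentially on the defining oscillation condition for $\VMO$ rather than on the pointwise values of $w$. Everything else in the argument --- the $\VMO$ stability of $1/|v|$, the smoothness of the projection $P$, and the $O(\epsilon)$ tangency defect --- is routine once this uniform convergence is in hand.
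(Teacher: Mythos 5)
Your proof is correct, and it takes a genuinely different route from the paper. For the ``only if'' direction, the paper constructs the Gauss map $\gamma$ of the oriented hypersurface and considers the homotopy $H(x,t) := (\cos t)\gamma(x) + (\sin t)\, v(x)/|v(x)|$ as a continuous path in $\VMO(N,\S^n)$. Invariance of the Brezis--Nirenberg degree along this path gives $\deg(\gamma) = \deg(-\gamma)$; since one may assume $n$ even (the odd case is trivial), $\deg(-\gamma) = -\deg(\gamma)$, whence $\deg(\gamma) = 0$, and Gauss--Bonnet converts this into $\chi(N) = 0$. Your argument instead regularizes the field directly: normalize, average, project tangentially, use the $\VMO$-into-sphere estimate to show $|w_\varepsilon| \to 1$ and the smoothness of $x \mapsto T_xN$ to show the tangency defect is $O(\varepsilon)$, and then invoke the classical Poincar\'e--Hopf theorem on the resulting continuous nowhere-vanishing tangent field. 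What you gain is a direct reduction to the classical theorem, requiring neither the $\VMO$ degree nor Gauss--Bonnet; what the paper's proof gains is a slick algebraic obstruction ($\deg\gamma = -\deg\gamma$) that sidesteps the need to produce a continuous field at all. It is worth noting that your construction is essentially the same machinery the paper develops in Section~\ref{sect: VMO index} for the bounded case (cf.\ Lemma~\ref{lemma: overline} and the definitions in~\eqref{def:uege}), so your proof can be read as the boundaryless specialization of the paper's general technique, whereas the paper deliberately presents a separate, self-contained argument here. One small point you should tighten: to see that $w = v/|v|$ belongs to $\VMO$, it is cleanest to observe that $w = \Phi\circ v$, where $\Phi(z) := z/|z|$ is Lipschitz on the compact annulus $\{c_1 \le |z| \le c_2\}$, and composition with a Lipschitz map preserves $\VMO$; the intermediate claim ``$1/|v| \in \VMO$ hence $v\cdot(1/|v|) \in \VMO$'' as stated implicitly uses that the product of two bounded $\VMO$ functions is $\VMO$, which is true but deserves a word.
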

 
 \medskip

After addressing manifolds without boundary, we consider the case where $N$ is a manifold with boundary, and we prescribe Dirichlet boundary conditions to the vector field $v$ on $N$.
The main issue of this paper is to understand which are the 
topological conditions on the manifold~$N$ and on the Dirichlet boundary datum
that guarantee the existence of a nonvanishing and bounded tangent vector field on $N$
extending the boundary condition. Applications of these results can be found in variational problems
for vector fields that satisfy a prescribed boundary condition of Dirichlet type, e.g., in the framework of liquid crystal shells.

More precisely, we address the following problem:
 
\begin{question} \label{quest: comb boundary}
 Let $N \subset \R^d$ be a compact, connected and orientable $n$-submanifold with boundary. Let \mbox{$g\colon \partial N \to \R^d$} be a boundary datum in $\VMO$, satisfying 
 \begin{equation} \label{hp g VMO}
  g(x)\in T_x N \quad \textrm{and} \quad c_1 \leq \abs{g(x)} \leq c_2 
 \end{equation}
 for $\H^{n - 1}$-a.e. $x\in \partial N$ and some constants $c_1, \, c_2 > 0$. Does a field $v\in \VMO(N, \, \R^d)$, which fulfills~\eqref{v} and has trace $g$ (in some sense, to be specified), exist?
\end{question}

When working in the continuous setting, a similar issue can be investigated with the help of a topological tool: the index of a vector field. In particular, even in this weak framework, 
we expect conditions that 
relate the index of the boundary conditions with the index of the tangent vector field
and the Euler characteristic of $N$.
In order to understand the difficulties and to ease the presentation, we recall here some definitions related to the degree theory and an important property.

First, we recall Brouwer's definition of degree.
 Let $N$ be as in Question~\ref{quest: comb boundary} and let $M$ be a connected, orientable manifold without boundary, of the same dimension as $N$.
Let $\varphi\colon N \to M$ be a smooth map, and let $p\in M\setminus \varphi(\partial N)$ be a regular value for $\varphi$ (that is, the Jacobian matrix $D \varphi(x)$ is non-singular for all $x\in \varphi^{-1}(p)$).
We define the degree of $\varphi$ with respect to $p$ as
\[
 \deg(\varphi, \, N, \, p) := \sum_{x\in \varphi^{-1}(p)} \mathrm{sign }(\det D \varphi(x)).
\]
This sum is finite, because $\varphi^{-1}(p)$ is a discrete set (as $\varphi$ is locally invertible around each point of $\varphi^{-1}(p)$) and $N$ is compact. 

It can be proved that, if $p_1$ and $p_2$ are two regular values in the same component of \mbox{$M \setminus\varphi(\partial N)$}, then $\deg(\varphi, \, N, \, p_1) = \deg(\varphi, \, N, \, p_2)$.
Since the regular values of $\varphi$ are dense in $M$ (by Sard lemma), the definition of $\deg(\varphi, \, N, \, p)$ can be extended to every \mbox{$p\in M \setminus \varphi(\partial N)$}. 
Moreover, by approximation it is possible to define the degree when $\varphi$ is just continuous.
In case $N$ is a manifold without boundary, $\deg(\varphi, \, N, \, p)$ does not depend on the choice of $p\in M$, so we will denote it by $\deg(\varphi, \, N, \, M)$.
Let us mention also that, if $N$ and $M$ are compact and without boundary, the following formula holds:
\begin{equation} \label{deg_formula}
 \deg(\varphi, \, N, \, M) = \frac{1}{\tau(M)}\int_N \varphi^*(\d \tau) = \frac{1}{\tau(M)}\int_N \det D\varphi(x) \, \d\sigma(x) ,
\end{equation}
where $\sigma, \, \tau$ are the Riemannian metrics on $N$ and $M$, respectively.

Ideally, given a continuous vector field $v$, one would like to define its index by
\[
 \ind(v, \, N) = \deg(v, \, N, \, 0) .
\]
However, this is not possible, because in order to define the degree it is essential that the domain and the target manifold have the same dimension.
This is not the case here, since the domain manifold $N \subset \R^d$ has dimension strictly less that the target manifold $\R^d$.
To overcome this issue, there are at least two different strategies.
The one we consider in this paper, which is also the most widely studied in the literature
(see, e.g., \cite{Morse, GuiPol74, LLoyd, Milnor, Spivak}),
is to use coordinate charts to represent $v$, locally around its zeros, as a map $\R^n \to \R^n$.
This requires an additional assumption, namely that the zero set of $v$ is discrete.
Thus, within this approach, an approximation technique is needed in order to extend the definition of index to any continuous field.
This construction,  based on the Transversality Theorem, is explained in detail in Section~\ref{app: cont index}.
Another possibility is to consider an open neighbourhood $U \subset \R^d$ of $N$, and extend $v$ to a map $w\colon U \to \R^d$, in a suitable way. 
Then, it would make sense to write
\[
 \ind(v, \, N) := \deg(w, \, U, \, 0) ,
\]
and this would give an equivalent definition of the index.
This approach is inspired by a classical proof of the Poincar\'e-Hopf theorem, which can be found in \cite[Theorem 1, p. 38]{Milnor}).
Some details of this construction are given in Remark~\ref{remark: tubular}.

Once the index has been properly defined, it can been used to establish a precise relation between the behaviour of a vector field $v$ and the topological properties of $N$.
Denote by $\partial_- N$ the subset of the boundary where $v$ points inward 
(that is, letting $\nu(x)$ be the outward unit normal to $\partial N$ in $T_x N$, we have $x\in \partial_- N$ if and only if $v(x)\cdot \nu(x) < 0$).
Call $P_{\partial N}v$ the vector field on $\partial N$ defined by
\[
 P_{\partial N} v(x) := \proj_{T_x \partial N} v(x) \qquad \textrm{for all } x\in \partial N .
\]
Morse proved the following equality (see \cite{Morse}), which was later rediscovered and generalized by Pugh (see \cite{Pugh}) and Gottlieb (see \cite{Gott, GottS}).
 
 \begin{prop}[Morse's index formula] \label{prop: morse index}
 If $v$ is a \emph{continuous} vector field over $N$ satisfying $0\notin v(\partial N)$, with finitely many zeros, and if $P_{\partial N} v$ has finitely many zeros, then
 \begin{equation}
 \label{eq:morse}
  	\ind(v, \, N) + \ind(P_{\partial N}v, \, \partial_- N) = \chi(N),
 \end{equation}
 where $\chi(N)$ is the Euler characteristic of $N$.
 \end{prop}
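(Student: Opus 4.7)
The plan is to reduce Morse's formula~\eqref{eq:morse} to the classical Poincar\'e-Hopf theorem for fields pointing outward on $\partial N$, by modifying $v$ in a collar of $\partial N$ so as to push any inward-pointing contribution outward, in such a way that the new zeros created exactly account for $\ind(P_{\partial N}v,\partial_- N)$.

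After a standard smoothing and transversality step, I may assume that $v$ is smooth, its zeros in $\mathrm{int}(N)$ are non-degenerate, $v$ does not vanish on $\partial N$, and $P_{\partial N}v$ has only non-degenerate zeros on $\partial N$; both sides of~\eqref{eq:morse} are stable under such perturbations. Next, I pick a smooth collar $\Phi\colon \partial N \times [0, \varepsilon) \hookrightarrow N$, let $\nu$ be the inward unit normal along $\partial N$ extended radially through the collar, and after a homotopy supported in the collar I may further assume that $v$ is independent of $t$ there, so that $v = P_{\partial N}v + \beta(y)\,\nu$ with $P_{\partial N}v(y)$ and $\beta(y)$ depending only on $y\in\partial N$. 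Then define
\[
w := P_{\partial N}v + \bigl(\beta(y) - \lambda(t)\bigr)\nu
\]
inside the collar and $w := v$ outside, where $\lambda\in C^{\infty}([0,\varepsilon))$ is strictly decreasing on $[0,\varepsilon/2]$, equal to zero on $[\varepsilon/2,\varepsilon)$, and satisfies $\lambda(0) > \max_{\partial N}\beta$. Then $w$ coincides with $v$ off the collar (so the interior zeros of $v$ persist with their indices), $w$ points strictly outward on $\partial N$, and its new zeros inside the collar solve $P_{\partial N}v(y) = 0$ and $\beta(y) = \lambda(t)$; the first condition is non-empty only at zeros of $P_{\partial N}v$ on $\partial N$, and the second forces $\beta(y) > 0$, i.e., $y \in \partial_- N$. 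The new zeros are thus in bijection with the zeros of $P_{\partial N}v$ in $\partial_- N$, and a block-triangular computation of the Jacobian of $w$ at each such zero shows that its Morse index equals the index of $P_{\partial N}v$ at the corresponding boundary zero, since the diagonal entry $-\lambda'(t^*)$ is strictly positive at the crossing. Hence
\[
\ind(w, N) = \ind(v, N) + \ind(P_{\partial N}v, \partial_- N).
\]

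Since $w$ is outward-pointing on $\partial N$ with isolated interior zeros, the Poincar\'e-Hopf theorem for manifolds with boundary gives $\ind(w, N) = \chi(N)$. This last ingredient can be proved either by doubling $N$ along $\partial N$ and applying the closed-manifold Poincar\'e-Hopf theorem on $\widetilde{N}$ (using $\chi(\widetilde{N}) = 2\chi(N) - \chi(\partial N)$ together with a careful control of the extension of $w$ across the boundary), or by extending $w$ to a tubular neighbourhood of $N$ in $\R^d$ and computing the Brouwer degree on the boundary of that neighbourhood, in the spirit of Milnor's proof. Combining with the previous identity yields~\eqref{eq:morse}.

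The trickiest step is the local index comparison at the new zeros produced in the collar: one must verify, by a careful linearisation that tracks orientations between the product structure $\partial N \times [0,\varepsilon)$ and the tangent bundle of $\partial N$, that the sign contribution of $-\lambda'(t^*)$ matches the orientation conventions so that the new zero of $w$ has exactly the same index as the corresponding zero of $P_{\partial N}v$ on $\partial_- N$. The preliminary homotopy making $v$ independent of $t$ in the collar is precisely what reduces this to a clean block-triangular sign check; once that bookkeeping is settled, the remainder of the argument is a standard application of degree theory.
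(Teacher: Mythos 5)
Your argument is correct, and it is a genuinely self-contained proof of the classical Morse formula, whereas the paper does not prove Proposition~\ref{prop: morse index}: it is cited from \cite{Morse} (see also \cite{Pugh,Gott,GottS}) and is then \emph{used} as a black box in the Appendix to prove the more general Proposition~\ref{prop: continuous index formula}. The paper's Appendix argument is also collar-based, but the collar modification there serves a different purpose: it interpolates between the original field and a transverse approximant $\xi$ of the tangential boundary value, so that the stability lemma (Proposition~\ref{prop: stability}) brings the general continuous case down to the classical transverse case of Proposition~\ref{prop: morse index}. Your modification instead subtracts a large outward bump $\lambda(t)\nu$ so that the boundary term is converted into interior zeros and one is reduced to the Poincar\'e--Hopf theorem for outward-pointing fields. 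Both strategies buy something: the paper's buys Proposition~\ref{prop: continuous index formula} at minimal cost (granting Morse), while yours proves Proposition~\ref{prop: morse index} itself from the more primitive Poincar\'e--Hopf statement, which is closer in spirit to the constructions cited in Remark~\ref{remark: tubular}.

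Two small remarks. First, the orientation bookkeeping that you flag as the ``trickiest step'' is in fact automatic: the local index $\mathrm{sign}\det \d(f_*v)_{f(x)}$ is invariant under orientation-reversing changes of chart (the sign flips cancel in domain and codomain), so the block-triangular computation $\det \d w = \det \d(P_{\partial N}v)\cdot(-\lambda'(t^*))$ and the positivity of $-\lambda'(t^*)$ already close the argument, independently of whether the collar chart is positively oriented. Second, you should make sure $\lambda'<0$ strictly on $(0,\varepsilon/2)$ (which is compatible with $\lambda$ being smooth and vanishing to infinite order at $\varepsilon/2$); since $\beta(y^*)>0$ forces $t^*<\varepsilon/2$, this guarantees $-\lambda'(t^*)>0$ at every new zero, as needed. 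With these clarifications the proof is complete.
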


In figure~\ref{fig:one} we plot some examples on $N=\overline{B_r(0)}$. In this case $\chi(N)=1$.

\begin{figure}[h]
\centering 
\parbox{.3\textwidth}{
	\centering
	\includegraphics[height=4cm]{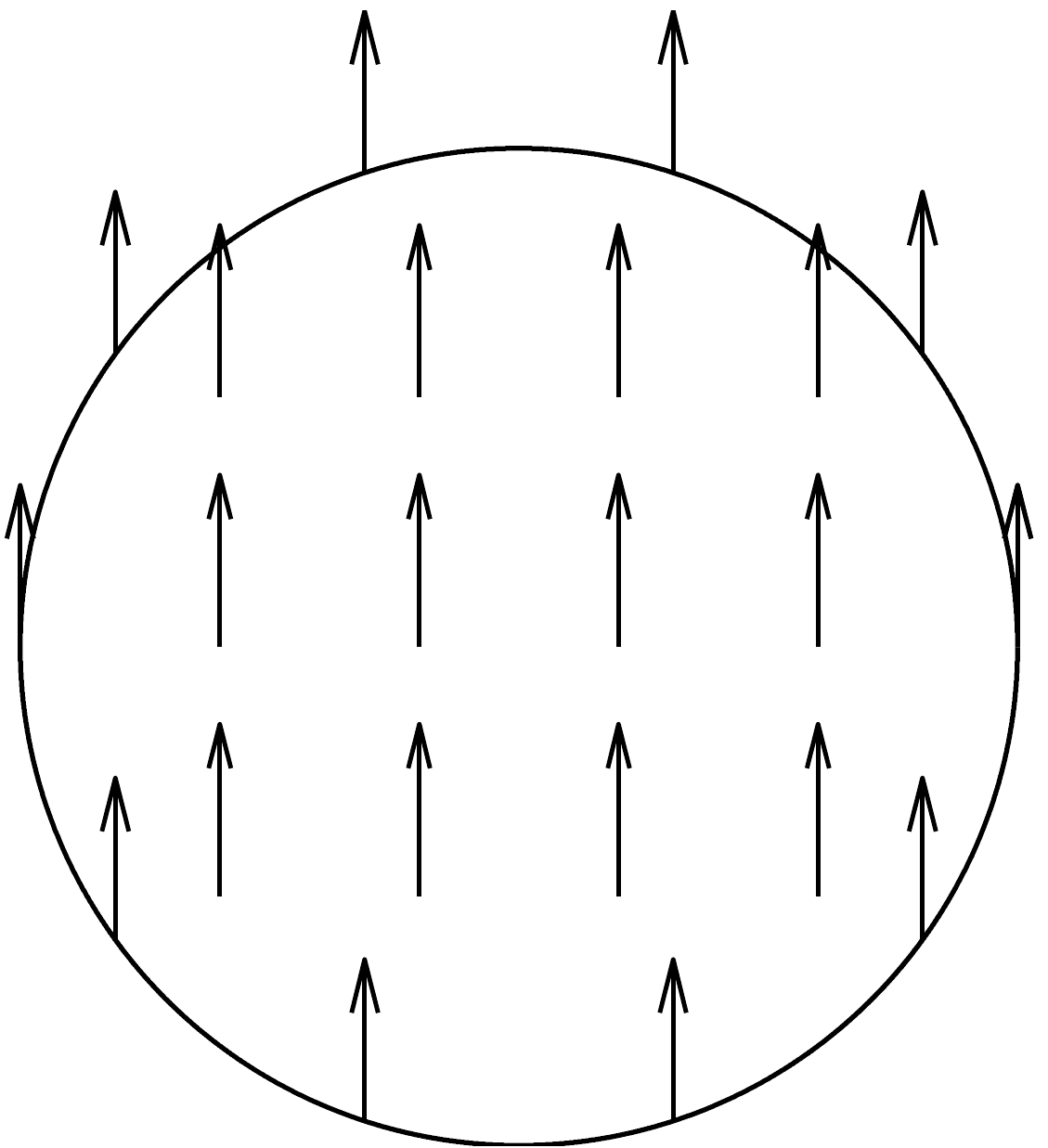}
	\mbox{a) $v_1(x,y)=(0,1);$}}
	\quad 
\begin{minipage}{.3\textwidth}
	\centering
	\includegraphics[height=4cm]{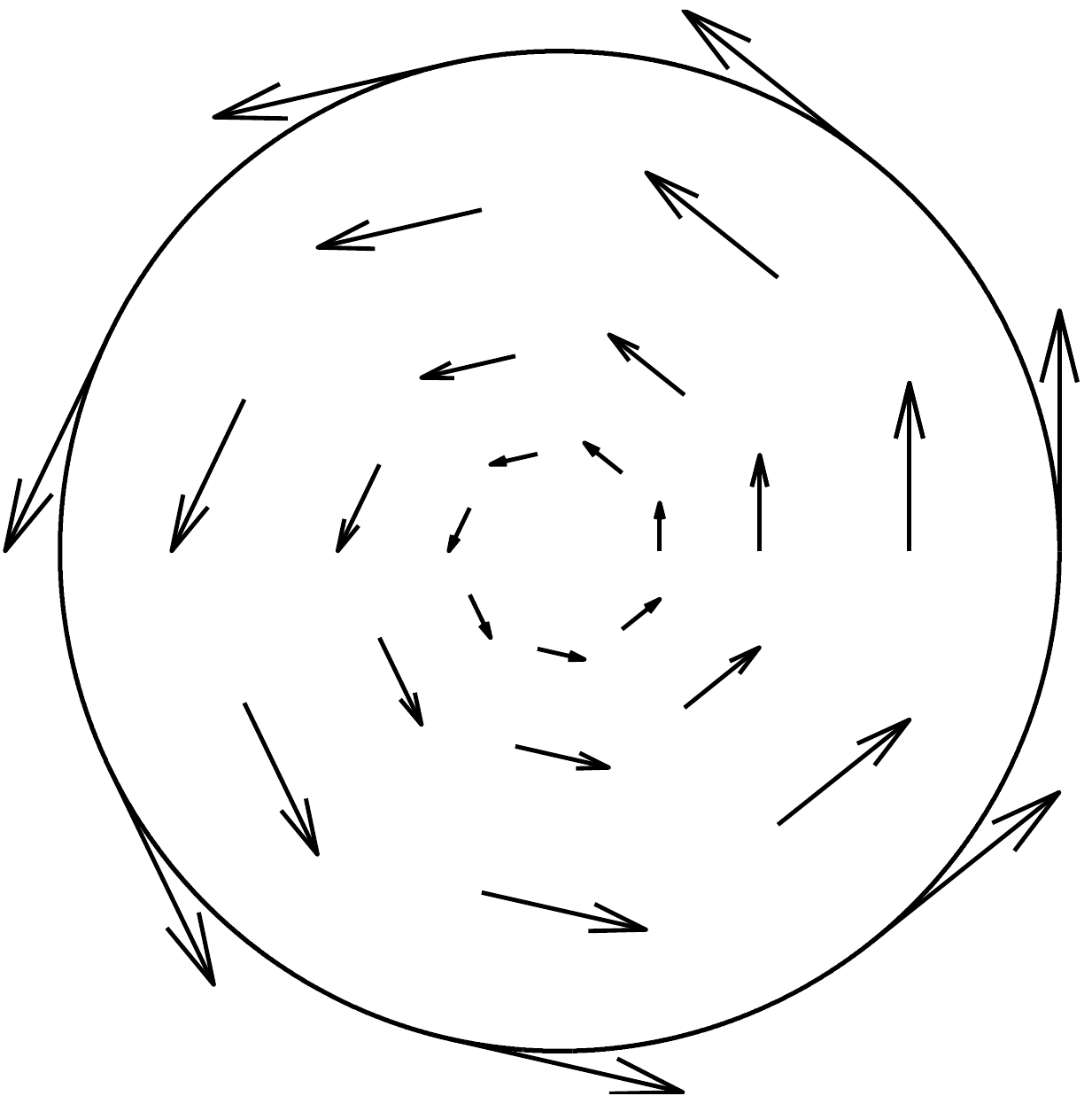}
	\mbox{b) $v_2(x,y)=(-y,x);$}
\end{minipage}
	\qquad 
\begin{minipage}{.3\textwidth}
	\centering
	\includegraphics[height=4cm]{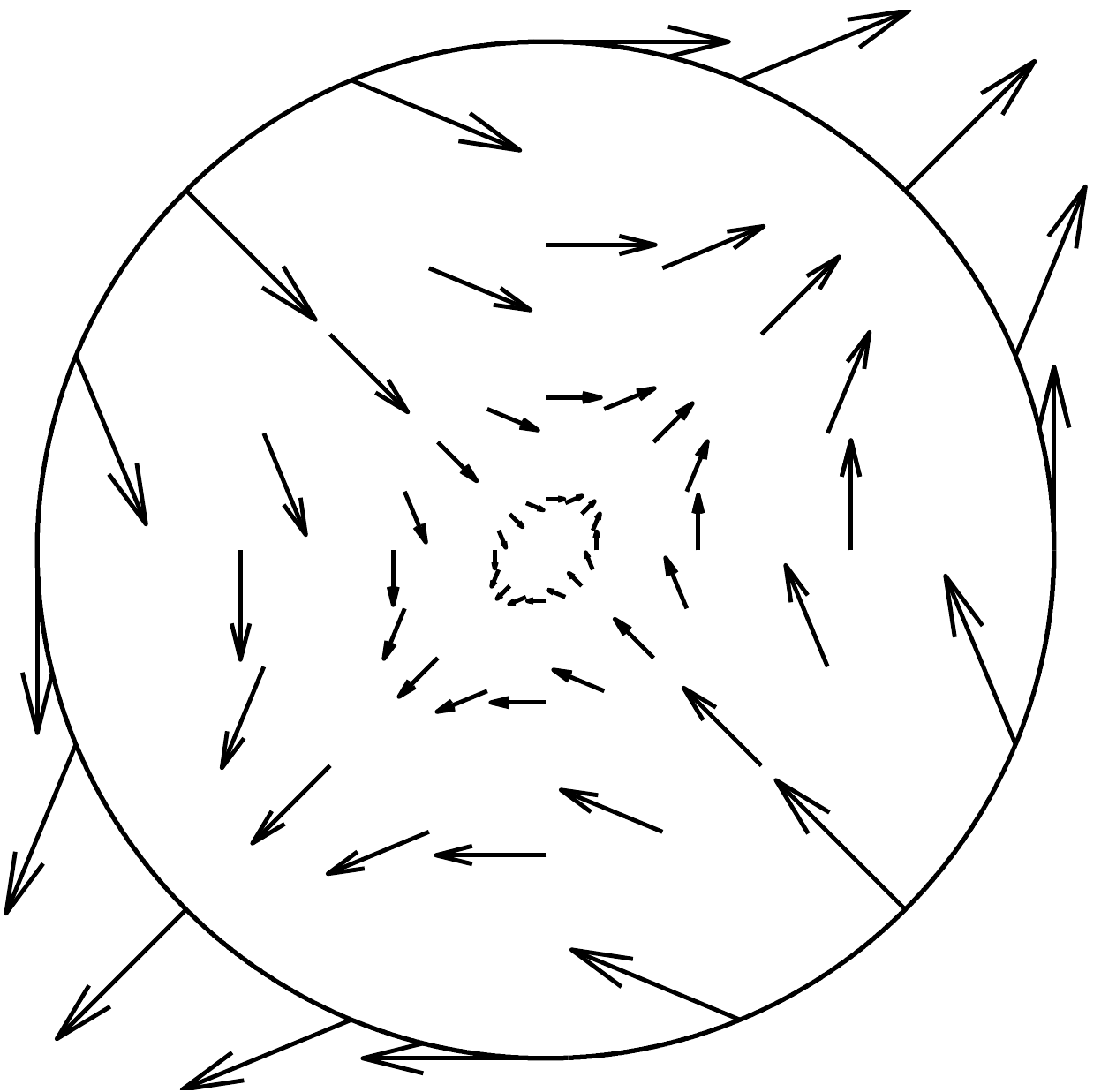}
	\mbox{c) $v_3(x,y)=(y,x)$} 
\end{minipage}
\caption{a) $\ind(v_1, N)=0$, $\ind(P_{\partial N}v_1, \, \partial_- N)=1$; b) $\ind(v_2, N)=1$, $\ind(P_{\partial N}v_2, \, \partial_- N)=0$; c) $\ind(v_3, N)=-1$, $\ind(P_{\partial N}v_3, \, \partial_- N)=2$.}
\label{fig:one}
\end{figure}

Identity~\eqref{eq:morse} can be seen as a generalization of the Poincar\'e-Hopf index formula.
As an immediate corollary, we obtain a necessary condition for the existence of nowhere vanishing vector fields which extends in $N$ a given a boundary datum.
 
 \begin{cor}
  Let $g\colon\partial N \to \R^d$ be a \emph{continuous} function, satisfying~\eqref{hp g VMO}, and assume that $P_{\partial N} g$ has finitely many zeros.
 If there exists a \emph{continuous} vector field $v$, satisfying~\eqref{v}, such that $v_{|\partial N} = g$ then
 \[
  \ind(P_{\partial N} g, \, \partial_- N) = \chi(N).
 \]
 \end{cor}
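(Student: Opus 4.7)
The plan is to apply Morse's index formula (Proposition~\ref{prop: morse index}) directly to the extension $v$. The hypotheses of the formula need to be checked, and only one nontrivial step remains: computing $\ind(v, \, N)$.

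First, I would verify the hypotheses. Since $v$ satisfies~\eqref{v}, we have $\abs{v(x)} \geq c_1 > 0$ for every $x \in N$, so in particular $0 \notin v(\partial N)$, and moreover $v$ has \emph{no} zeros in $N$ at all; hence the assumption of finitely many interior zeros is satisfied vacuously. On the boundary, $v_{|\partial N} = g$ implies $P_{\partial N} v = P_{\partial N} g$, and by assumption this field has finitely many zeros on $\partial N$. The set $\partial_- N$, which depends only on the boundary values of $v$, coincides with the corresponding set defined by $g$.

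Next, since the zero set of $v$ in $N$ is empty, the sum $\sum_{x \in v^{-1}(0)} \mathrm{sign}(\det D v(x))$ defining the index is empty, so $\ind(v, \, N) = 0$. (Equivalently, one can invoke the tubular-neighbourhood picture of Remark~\ref{remark: tubular}: if $v$ admits a nowhere-vanishing extension $w$ to $U$, then $0$ is a regular value outside $w(U)$ and the degree is zero.)

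Finally, substituting into~\eqref{eq:morse},
\[
 0 + \ind(P_{\partial N} g, \, \partial_- N) = \chi(N),
\]
which is the desired identity. There is no real obstacle in this argument; it is a direct corollary, and the only conceptual point worth flagging is the vanishing of $\ind(v, \, N)$ for a nowhere-vanishing continuous field, which is immediate from the chosen definition of the index.
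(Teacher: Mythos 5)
Your proof is correct and is precisely the argument the paper intends (the paper simply labels the statement ``an immediate corollary'' of Proposition~\ref{prop: morse index} without elaborating). Verifying that the extension $v$ is nowhere vanishing, hence vacuously transverse with $\ind(v,\,N)=0$, and that $P_{\partial N}v=P_{\partial N}g$ on $\partial N$, is exactly the content needed to read off the conclusion from~\eqref{eq:morse}.
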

 This Corollary gives an answer to Question~\ref{quest: comb boundary} in case we consider smooth vector fields.
 
 \medskip
 Our aim in this paper is to extend Proposition~\ref{prop: morse index} to the $\VMO$ setting.
 For this purpose, we extend the definition of index to arbitrary $\VMO$ fields, with a trace at the boundary.
 We introduce another quantity, which we call ``inward boundary index'' and denote by $\ind_-(v, \, \partial N)$, playing the role of $\ind(P_{\partial N}v, \, \partial_- N)$. 
 (The reader is referred to Section~\ref{sect: VMO index} for the definitions). \par
 
 Then, our main result is
\begin{theor} \label{th: VMO morse}
 Let $N$ be a compact, connected and orientable submanifold of $\R^d$, with boundary.
 Let $g\in \VMO(\partial N, \, \R^d)$ be a boundary datum which fulfills
 \[
  g(x)\in T_x N \qquad \textrm{and} \qquad c_1 \leq \abs{g(x)} \leq c_2
 \]
 for some constants $c_1, \, c_2 > 0$ and $\H^{n - 1}$-a.e. $x\in \partial N$.
 If \mbox{$v\in \VMO(N, \, \R^d)$} is a map with trace~$g$ at the boundary, satisfying
 \[
  v(x) \in T_x N
 \]
 for a.e. $x\in N$, then
 \[
  \ind(v, \, N) + \ind_- (v, \, \partial N) = \chi(N) .
 \]
\end{theor}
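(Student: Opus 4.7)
The strategy is to approximate the VMO field $v$ by smooth tangent vector fields $v_\varepsilon$ and appeal to Proposition~\ref{prop: morse index}. First I would construct a family $(v_\varepsilon)_{\varepsilon > 0} \subset C^\infty(N, \R^d)$ with the following properties: $v_\varepsilon(x)\in T_x N$ for every $x \in N$; $v_\varepsilon \to v$ in $\VMO(N)$ and the traces $g_\varepsilon := v_{\varepsilon|\partial N}$ satisfy $g_\varepsilon \to g$ in $\VMO(\partial N)$; and both $v_\varepsilon$ and the projected field $P_{\partial N} v_\varepsilon$ have only finitely many, nondegenerate zeros. This is achieved by mollification in a finite atlas of charts via a partition of unity (using half-space convolutions near $\partial N$ so that the trace is controlled), followed by a pointwise reprojection onto $TN$ using the smooth orthogonal projector available in a tubular neighbourhood, and finally a generic small perturbation based on Sard's theorem and the Transversality Theorem to eliminate non-isolated and degenerate zeros both of $v_\varepsilon$ and of $P_{\partial N} v_\varepsilon$.

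For each $\varepsilon > 0$, Proposition~\ref{prop: morse index} applied to $v_\varepsilon$ then yields
\[
 \ind(v_\varepsilon, \, N) + \ind\!\left(P_{\partial N}v_\varepsilon, \, \partial_- N(v_\varepsilon)\right) = \chi(N),
\]
where $\partial_- N(v_\varepsilon) := \{x \in \partial N : v_\varepsilon(x)\cdot \nu(x) < 0\}$. Letting $\varepsilon \to 0$, the plan is to invoke the stability of the two VMO indices introduced in Section~\ref{sect: VMO index}. The convergence $\ind(v_\varepsilon, N) \to \ind(v, N)$ follows from the $\VMO$-stability of the bulk index, which in turn relies on Brezis--Nirenberg's continuity of the $\VMO$-degree on small spheres surrounding the zeros of $v$: one localises in finitely many balls on which $|v|$ has mean bounded below, and on the boundary of each such ball the normalised fields $v_\varepsilon/|v_\varepsilon|$ converge in VMO, hence have eventually constant degree. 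The corresponding statement $\ind(P_{\partial N} v_\varepsilon,\, \partial_-N(v_\varepsilon)) \to \ind_-(v, \partial N)$ should be extracted from the fact that $\ind_-(v, \partial N)$ is defined intrinsically in terms of the VMO trace $g = v_{|\partial N}$, so only the convergence $g_\varepsilon \to g$ on $\partial N$ is needed.

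I expect the main obstacle to be this last convergence. The interior estimate is standard once the VMO bulk index is properly set up, because the zeros of $v$ can be isolated inside small balls on which the averaged norm is bounded away from $0$. The boundary term, however, is genuinely subtle: both the projected field $P_{\partial N}v_\varepsilon$ and its natural domain $\partial_- N(v_\varepsilon)$ vary with~$\varepsilon$, and the inward/outward decomposition is highly unstable near points where $g\cdot\nu$ changes sign, precisely where the zeros of $P_{\partial N}g$ on $\partial_- N$ tend to accumulate. The core estimate I would push through is that any sufficiently small VMO perturbation of $g$ produces only a degree-irrelevant modification of the effective inward component and does not affect the $\VMO$-degree of $P_{\partial N}g /|P_{\partial N}g|$ on spheres encircling the zeros of $P_{\partial N}g$. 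Once this robustness is in hand, passing to the limit in the smooth Morse formula completes the argument.
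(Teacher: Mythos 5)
Your proposal follows the same broad strategy as the paper (approximate by continuous tangent fields, apply the continuous Morse formula, pass to the limit), but there is a genuine gap at the decisive point, which is precisely where the paper invests most of its technical effort.

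The paper defines the two VMO indices via \emph{specific} canonical approximants: $\ind(v,N):=\ind(u_\varepsilon,N)$ with $u_\varepsilon:=P_X\bar u_\varepsilon$ (local average of the double-extension, reprojected onto $TX$), and $\ind_-(v,\partial N):=\ind_-(g_\varepsilon,\partial N)$ with $g_\varepsilon:=P_X\bar g_\varepsilon$. These two sequences of approximants are built from different averages ($n$-dimensional balls in $X$ for $u_\varepsilon$, $(n-1)$-dimensional balls in $\partial N$ for $g_\varepsilon$), and there is no a priori reason why $u_\varepsilon\vert_{\partial N}$ and $g_\varepsilon$ should be close. The entire proof of Theorem~\ref{th: VMO morse} hinges on Lemma~\ref{lemma: u_eps bd}, which establishes the \emph{uniform} estimate $\sup_{\partial N}\lvert u_\varepsilon - g_\varepsilon\rvert\to 0$; together with the lower bound of Lemma~\ref{lemma: g_epsilon positive} and the quantitative stability of Proposition~\ref{prop: stability}, this is what lets one replace $\ind_-(g_\varepsilon,\partial N)$ with $\ind_-(u_\varepsilon,\partial N)$, after which Proposition~\ref{prop: continuous index formula} finishes the proof. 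Your proposal never addresses this compatibility: you introduce a third family $v_\varepsilon$ (mollified and reprojected), and you would have to show that $v_\varepsilon\vert_{\partial N}$ is \emph{uniformly} close to the paper's $g_\varepsilon$, not merely close in $\VMO$. The assertion that ``only the convergence $g_\varepsilon\to g$ on $\partial N$ is needed'' is not sufficient: $\VMO$ convergence of traces does not control sup-norm distance, and neither family converges uniformly to $g$ (indeed $g$ need not be bounded). The stability Proposition~\ref{prop: stability} requires a sup-norm smallness condition, and producing it for your $v_\varepsilon$ against the canonical $g_\varepsilon$ would essentially reprove Lemma~\ref{lemma: u_eps bd}.

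Two further, smaller issues. First, the localisation-on-balls picture for the bulk term does not apply: a $\VMO$ field can vanish on a set of positive measure, so there is no finite collection of small balls encircling ``the zeros of $v$''; the paper avoids this by working directly with the continuous approximant $u_\varepsilon$ and by having at hand Proposition~\ref{prop: continuous index formula}, the Morse formula for \emph{arbitrary} continuous fields with $0\notin v(\partial N)$ (no finiteness of zeros required), rather than Proposition~\ref{prop: morse index}. You therefore do not need, and would in fact complicate your life by, insisting that $v_\varepsilon$ and $P_{\partial N}v_\varepsilon$ have finitely many nondegenerate zeros. Second, well-posedness of the $\VMO$ indices (Lemma~\ref{lemma: constant}: constancy in $\varepsilon$) is obtained via a $\mathscr C^0$-homotopy in $\varepsilon$, not by degree continuity on spheres; you would still need that ingredient if you replace the canonical approximants with mollifications.

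In short: your outline is directionally right, but the sentence ``the boundary term, however, is genuinely subtle'' is exactly where the proof lives, and the concrete content one needs there is the uniform comparison of the bulk trace approximant with the boundary approximant (Lemma~\ref{lemma: u_eps bd}), which your sketch does not supply and cannot replace by VMO convergence alone.
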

Note that this Theorem is the analogous of Proposition~\ref{prop: morse index} for $\VMO$ vector fields. 
Finally, regarding Question ~\ref{quest: comb boundary}, we have the following answer. 
 
\begin{prop} \label{prop: CNS}
 Let $g\in \VMO(\partial N, \, \R^d)$ satisfy the assumption~\eqref{hp g VMO}.
 A field $v\in\VMO(N, \, \R^d)$ that satisfies~\eqref{v} and has trace $g$ exists if and only if
 \begin{equation}
 \label{eq:sufficient}
  \ind_-(g, \, \partial N) = \chi(N) . 
 \end{equation}
\end{prop}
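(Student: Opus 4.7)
Suppose $v\in\VMO(N,\R^d)$ satisfies~\eqref{v} with boundary trace $g$. Since $|v|\ge c_1>0$ a.e., the field carries no interior topological singularity, and the definition of the $\VMO$ index recalled in Section~\ref{sect: VMO index} yields $\ind(v,N)=0$. On the other hand, the inward boundary index depends only on the boundary trace, so $\ind_-(v,\partial N)=\ind_-(g,\partial N)$. Applying Theorem~\ref{th: VMO morse} immediately gives $\ind_-(g,\partial N)=\chi(N)$.

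\textbf{Sufficiency.} Assume $\ind_-(g,\partial N)=\chi(N)$. The plan is to construct $v$ in three stages: a smooth approximation of $g$, the classical smooth extension of this approximation, and a correction of the boundary trace in a tubular collar of $\partial N$. Using mollification composed with projection onto $TN$ and pointwise rescaling to keep the magnitude in $[c_1/2,2c_2]$, choose a smooth tangent field $g_\ast\in C^\infty(\partial N,\R^d)$ arbitrarily $\BMO$-close to $g$; by the stability of $\ind_-$ under $\VMO$ convergence (Section~\ref{sect: VMO index}), one has $\ind_-(g_\ast,\partial N)=\chi(N)$ for a fine enough approximation. The classical converse of Proposition~\ref{prop: morse index} (cf.~\cite{Morse,Pugh}) then produces a smooth, nowhere-vanishing tangent vector field $v_\ast\in C^\infty(N,\R^d)$ with $v_\ast|_{\partial N}=g_\ast$ and uniform bounds $c_1'\le|v_\ast|\le c_2'$. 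Fix a tubular parametrization $\Phi\colon\partial N\times[0,2\delta]\to C_{2\delta}\subset N$, a smooth cutoff $\eta\colon[0,2\delta]\to[0,1]$ with $\eta\equiv 1$ on $[0,\delta/2]$ and $\eta\equiv 0$ on $[\delta,2\delta]$, and let $\tilde g$ denote the parallel transport of $g$ along normal geodesics — a $\VMO$ tangent field on $C_{2\delta}$, of magnitude $|g|$, with trace $g$. Define
\[
v(x):=\begin{cases}\eta(t)\,\tilde g(x)+(1-\eta(t))\,v_\ast(x) & \text{if }x=\Phi(y,t)\in C_{2\delta},\\ v_\ast(x) & \text{if }x\in N\setminus C_{2\delta}.\end{cases}
\]
By construction $v\in\VMO(N,\R^d)$ is tangent to $N$ and has trace $g$.

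\textbf{Main obstacle.} The technical core is to verify the a.e.\ lower bound $|v|\ge c''>0$. Outside the transition annulus $\{\delta/2\le t\le \delta\}$ this is immediate: $v$ coincides with either $\tilde g$ (of magnitude $|g|\ge c_1$) or the nowhere-vanishing smooth $v_\ast$. Inside the annulus, $v$ is a genuine convex combination of $\tilde g$ and $v_\ast$, which could vanish at points where the two vectors are antipodal. To rule this out on a set of positive measure one exploits that $\tilde g/|\tilde g|$ is the parallel transport of $g/|g|$, while near $\partial N$ the smooth direction $v_\ast/|v_\ast|$ is $O(\delta)$-close to $g_\ast/|g_\ast|$ — itself $\BMO$-close to $g/|g|$. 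A genericity argument on the choice of the approximation $g_\ast$ (or alternatively replacing the linear convex combination by a great-circle interpolation in $\mathbb{S}^{d-1}$ between the two directions, avoiding the antipodal cut locus) guarantees that the combination is nowhere zero almost everywhere. Reconciling this pointwise nonvanishing requirement with the merely $\VMO$ character of the boundary datum is the main technical difficulty of the argument.
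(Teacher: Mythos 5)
Your argument is correct and is the same as the paper's: $|v|\geq c_1>0$ a.e.\ forces $\ind(v,N)=0$, and Theorem~\ref{th: VMO morse} gives~\eqref{eq:sufficient}.

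\textbf{Sufficiency: the gap.} The plan of interpolating in a collar between a parallel-transported copy $\tilde g$ of the $\VMO$ datum and a smooth nonvanishing extension $v_\ast$ of an approximant $g_\ast$ cannot be carried out as described, and you have in fact correctly identified the obstruction without overcoming it. The heart of the problem is that $\BMO$-closeness of $g_\ast$ to $g$ gives \emph{no} pointwise control: there can be a set $E\subset\partial N$ of positive $\H^{n-1}$-measure on which $g(y)\cdot g_\ast(y)<0$ (indeed on which the two directions are arbitrarily close to antipodal), no matter how small $\norm{g-g_\ast}_{\BMO}$ is. On the cylinder over $E$, the convex combination $\eta\,\tilde g+(1-\eta)v_\ast$ passes through $0$ for some $\eta\in(0,1)$, so your field $v$ vanishes on a set of positive measure. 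Neither of the proposed remedies addresses this. ``Genericity of $g_\ast$'' is not available: the bad set $E$ is determined by the fixed (and possibly badly oscillating) $\VMO$ field $g$, and perturbing $g_\ast$ within the space of smooth $\BMO$-approximants does not shrink $E$, because pointwise values are simply not controlled by the $\BMO$ norm. Great-circle interpolation on $\S^{d-1}$ suffers from the same antipodal degeneracy: it is undefined (or unstable, hence non-measurably-selectable) precisely where the two directions are (nearly) opposite, which is again a set of possibly positive measure. The sentence you wrote, ``Reconciling this pointwise nonvanishing requirement with the merely $\VMO$ character of the boundary datum is the main technical difficulty'', is accurate, but leaving it unresolved leaves the proof incomplete.

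\textbf{How the paper sidesteps this.} The paper never attempts to interpolate pointwise between a $\VMO$ object and a smooth one. Instead, on the collar it defines the extension \emph{directly from the averages} of $g$: in the tubular coordinates $\varphi\colon\partial N\times[0,r]\to \overline U_r$, one sets $\bar v(\varphi(y,s)):=\bar g_s(y)=\fint_{B_s^{\partial N}(y)}g$ and then $v:=P_X\bar v$ (see~\eqref{interpolating v} and Lemma~\ref{lemma: interpolation}). This field is automatically continuous on $\overline U_r\setminus\partial N$ (dominated convergence), lies in $\VMO(U_r)$ with trace $g$ (by~\cite[Lemma~7]{BN1}), and is bounded below by $c_1/3$ on $\overline U_r$ because of \emph{uniform} bounds on the averages (Lemmas~\ref{lemma: overline} and~\ref{lemma: g_epsilon positive}), not on $g$ pointwise a.e. The matching at $C_r=\varphi(\partial N\times\{r\})$ is then between two \emph{continuous} fields, and the interior extension over $N_r=N\setminus U_r$ is produced by the Transversality Theorem~\ref{th: transversality} together with the index computation $\ind(F,N_r)=\chi(N)-\ind_-(g,\partial N)=0$ and the standard procedure for cancelling finitely many zeros when the total index vanishes. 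If you wish to salvage your approach, you should replace the pointwise interpolation in the collar by the average-based construction~\eqref{interpolating v}; the rest of your outline (transversality plus the zero-cancellation at index zero) then matches the paper's argument.
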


In case the boundary datum satisfies~\eqref{hp g VMO},~\eqref{eq:sufficient} and~$g\in W^{1 - 1/p, p}(\partial N, \, \R^d)$ for some~\mbox{$p > 1$},
one can choose an extension~$v$ which, in addition to~\eqref{v}, satisfies~$v\in W^{1, p}(N, \, \R^d)$ (see Corollary~\ref{cor: sobolev}).
Therefore, the results we discuss in our paper are indeed relevant to the analysis of variational models for nematic shells.

We conclude this introduction with an outline of the paper. 
In Section~\ref{sect: VMO} we discuss some preliminary material, including basic properties of the $\VMO$ space (Subsection~\ref{subsect:VMO}), 
an application of Brezis and Nirenberg's degree theory to the existence of non-vanishing $\VMO$ fields on an unbounded manifold (Subsection~\ref{subsect: unbounded}),
and the definition of index in the basic case of a continuous field with a finite number of zeros (Subsection~\ref{subsect: cont index}).
In Appendix~\ref{app: cont index}, we recall how this notion can be extended to continuous fields with an arbitrary number of zeros, owing to Thom's Transversality Theorem. 
In Section~\ref{sect: VMO index}, by means of an approximation argument, this extension allows us to give a notion of index for a $\VMO$ vector field 
and to prove Theorem~\ref{th: VMO morse} and Proposition~\ref{prop: CNS}.
Finally, in Section~\ref{sec:application}, we apply these results to the existence of line fields with $\VMO$ regularity.
Interestingly, such an existence result shares the same topological obstruction as the existence result for vector fields.
As a side result of the existence of $\VMO$ $Q$-tensor fields, we obtain topological conditions for the existence of line fields with $\VMO$ regularity,
thus extending to this weaker setting a classical result due to Poincar\'e and Kneser.


\medskip
\noindent\textbf{Notation.} In the following sections either $N =\R^n$, or $N$ is a compact, connected and oriented manifold with boundary, of dimension~$n$, 
embedded as a submanifold of $\R^d$ for some $d\in\N$.

\begin{itemize}
 \item The injectivity radius of $N$ (see, e.g., do Carmo \cite{doCarmo}) is called $r_0$.
 \item We denote geodesic balls in $N$ by $B^N_r(x)$ or simply $B_r(x)$, when it is clear from the context that we work in $N$. In case $N = \R^n$, we write $B^n_r(x)$ or $B^n(x, r)$.
 \item For $\varepsilon > 0$, we set
 \[  
    N_\varepsilon := \left\{ x\in N \colon \dist(x, \, \partial N)\geq \varepsilon \right\}.
 \]
 \item For each $x\in\partial N$, we denote by $\nu(x)$ the outward unit normal to $\partial N$ in $T_xN$.
 \item Given a non-empty, convex and closed set $K\subset \R^d$, we denote the nearest-point projection on $K$ by $\proj_K$.
 \item Given a manifold $X\subset \R^d$ and a continuous map $v\colon X \to \R^d$, we denote the tangential component of $v$ by
 \[
  P_X v (x) := \mathrm{proj}_{T_x X} v(x) \qquad \textrm{for } x\in X .
 \]
\end{itemize}


 \section{Preliminary material}
 \label{sect: VMO}
 
 \subsection{VMO functions}
 \label{subsect:VMO}
 
 For the reader's convenience, we recall here the basic definitions about $\VMO$ functions, following the presentation of \cite{BN2} (to which the reader is referred, for more details). 
 All the functions we consider here take values in $\R^d$, so functional spaces such as, e.g., $L^1(N, \, \R^d)$ or $\VMO(N, \, \R^d)$ will be simply written as $L^1(N)$ or $\VMO(N)$.
 
 Recall that $N$ is endowed with a Riemannian measure~$\sigma$. For $u\in L^1(N)$ (with respect to~$\sigma$), define
 \begin{equation} \label{BMO norm}
  \norm{u}_{\BMO} := \underset{\substack{\varepsilon \leq r_0, \, x\in N_{2\varepsilon} \\ }}{\sup} 
  \fint_{B_\varepsilon(x)} \abs{u(y) - \bar u_\varepsilon(x) } \, \d \sigma(y) ,
 \end{equation}
 where
 \begin{equation} \label{medie}
  \bar u_\varepsilon (x) := \fint_{B_\varepsilon(x)} u(y) \, \d \sigma(y) , \qquad \textrm{for } x\in N_{2\varepsilon} .
 \end{equation}
 The set of functions with $\norm{u}_{\BMO} < +\infty$ will be denoted $\BMO(N)$, and~\eqref{BMO norm} defines a norm on $\BMO(N)$ modulo constants. 
 Using cubes instead of balls leads to an equivalent norm. 
 Moreover, if $\varphi\colon X_1 \to X_2$ is a $\mathscr C^1$ diffeomorphism between two unbounded manifolds, then $u\in \BMO(X_2)$ implies $u\circ \varphi\in \BMO(X_1)$ and
 \[
  \norm{u\circ\varphi}_{\BMO (X_1)} \leq C\norm{u}_{\BMO (X_2)}.
 \]
 Bounded functions (in particular, continuous functions) belong to BMO. Following Sarason, we define $\VMO(N)$ as the closure of $\mathscr C(N)$ with respect to the BMO norm. 
 Functions in $\VMO(N)$ can be characterized by means of this lemma (see \cite[Lemma 3]{BN1}):
 \begin{lemma} \label{lemma: VMO}
  A function $u\in\BMO(N)$ is in $\VMO(N)$ if and only if
  \[
   \lim_{\varepsilon\to 0} \sup_{x\in N_{2\varepsilon}} \fint_{B_\varepsilon(x)} \abs{u(y) - \bar u_\varepsilon(x)} \, \d \sigma(y) \to 0 .
  \]
 \end{lemma}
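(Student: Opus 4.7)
The plan is to prove the two implications separately. For brevity, write
\[
 \eta(f, \, \varepsilon) := \sup_{x \in N_{2\varepsilon}} \fint_{B_\varepsilon(x)} \abs{f(y) - \bar f_\varepsilon(x)} \, \d\sigma(y),
\]
so that $\eta(f, \, \varepsilon) \leq \norm{f}_{\BMO}$ for every $\varepsilon \leq r_0$, and the statement to be proved is exactly $u \in \VMO(N) \iff \eta(u, \, \varepsilon) \to 0$ as $\varepsilon \to 0$.

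The forward implication is a triangle-inequality argument against uniform continuity. Given $\delta > 0$, pick $\varphi \in \mathscr{C}(N)$ with $\norm{u - \varphi}_{\BMO} < \delta$, which is possible by definition of $\VMO(N)$. Compactness of $N$ makes $\varphi$ uniformly continuous, so its oscillation over any ball of radius $\varepsilon$ tends to zero uniformly in the centre, yielding $\eta(\varphi, \, \varepsilon) \to 0$ as $\varepsilon \to 0$. Subadditivity of the averaging procedure then gives
\[
 \eta(u, \, \varepsilon) \leq \eta(u - \varphi, \, \varepsilon) + \eta(\varphi, \, \varepsilon) \leq \norm{u - \varphi}_{\BMO} + \eta(\varphi, \, \varepsilon),
\]
so $\limsup_{\varepsilon \to 0} \eta(u, \, \varepsilon) \leq \delta$; sending $\delta \to 0$ concludes this half.

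For the converse, I would exhibit a sequence $u_\varepsilon \in \mathscr{C}(N)$ with $\norm{u - u_\varepsilon}_{\BMO} \to 0$. The natural candidate is the averaged function $u_\varepsilon(x) := \bar u_\varepsilon(x)$, whose continuity in $x$ follows from $L^1$-continuity of translations together with the smooth dependence of the geodesic balls on their centre. The key estimate is $\norm{u - u_\varepsilon}_{\BMO} \lesssim \eta(u, \, C\varepsilon)$, obtained as follows: for a test ball $B_r(x_0)$, write $u - \overline{(u_\varepsilon)}_{B_r(x_0)} = (u - u_\varepsilon) + (u_\varepsilon - \overline{(u_\varepsilon)}_{B_r(x_0)})$; the first summand contributes an integral directly bounded by $\eta(u, \, \varepsilon)$, while the second is handled by expressing $u_\varepsilon(x) - u_\varepsilon(x')$ as a difference of averages over two nearby balls and dominating it, via the triangle inequality applied to the symmetric difference of those balls, by $\eta(u, \, C\varepsilon)$ as well. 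Passing to the supremum in $(x_0, \, r)$ gives the required decay.

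The main obstacle is the behaviour near $\partial N$: the average $\bar u_\varepsilon$ is only defined on $N_{2\varepsilon}$, so $u_\varepsilon$ does not immediately produce a continuous function on the whole of $N$. The cleanest remedy is to work in a collar neighbourhood and pre-extend $u$ across $\partial N$ by reflection along the inward normal geodesics, obtaining a BMO function on a slightly enlarged manifold $\tilde N$ without boundary whose $\eta$-modulus is controlled by that of $u$. The averaging construction is then carried out on $\tilde N$, and the resulting continuous functions are restricted back to $N$. Verifying that the reflection extension preserves the BMO seminorm and the vanishing-mean-oscillation property at the right quantitative level is the only genuinely technical step of the argument; everything else is elementary manipulation of integral averages.
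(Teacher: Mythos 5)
The paper does not actually prove this lemma; it cites it verbatim from Brezis--Nirenberg \cite[Lemma~3]{BN1}. So there is no "paper's own proof" to compare against, and your proposal must be judged on its own. Your forward implication is correct and complete: subadditivity of $\eta$, the bound $\eta(u-\varphi,\varepsilon)\le\norm{u-\varphi}_{\BMO}$, and uniform continuity of $\varphi$ give exactly what is needed.

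The converse has the right ingredients but, as written, a genuine gap in the central estimate. You decompose $u - \overline{(u_\varepsilon)}_{B_r(x_0)} = (u - u_\varepsilon) + \bigl(u_\varepsilon - \overline{(u_\varepsilon)}_{B_r(x_0)}\bigr)$; this estimates the mean oscillation of $u$ over $B_r(x_0)$, not that of $u-u_\varepsilon$, and more seriously, neither summand is uniformly small in $r$. For $r\gg\varepsilon$ the second summand is the mean oscillation of $u_\varepsilon$ over a large ball, which is controlled only by $\norm{u}_{\BMO}$ (bounded, not small); for $r\ll\varepsilon$ the first summand $\fint_{B_r}|u-u_\varepsilon|$ cannot be bounded by $\eta$ at scale $\sim\varepsilon$ with a uniform constant — the natural estimate degrades like $(\varepsilon/r)^n$. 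The correct argument needs a case split. For $r\ge\varepsilon$, take the constant $0$, cover $B_r(x_0)$ by $\varepsilon$-balls with bounded overlap, and show $\fint_{B_r}|u-u_\varepsilon|\lesssim\eta(u,2\varepsilon)$; the second summand plays no role. For $r<\varepsilon$, use subadditivity of the mean oscillation, bounding the oscillation of $u$ on $B_r$ by $\eta(u,r)$ and the oscillation of $u_\varepsilon$ by its modulus of continuity at scale $r\le\varepsilon$, which you correctly note is $\lesssim\eta(u,2\varepsilon)$ by comparing nested balls. Also, the clean statement is $\norm{u-u_\varepsilon}_{\BMO}\lesssim\sup_{s\le C\varepsilon}\eta(u,s)$ rather than $\eta(u,C\varepsilon)$ at a single scale: the doubling inequality $\eta(u,s)\le 2^n\eta(u,2s)$ only compares a scale to a larger one, so one cannot dominate $\eta$ at all small scales by its value at one scale. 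None of this changes the conclusion, since the hypothesis is precisely that $\sup_{s\le C\varepsilon}\eta(u,s)\to 0$, but your sketch as written would leave a reader stuck on the large-$r$ case. The boundary discussion via reflection extension is sound, consistent with the paper's remark that $\BMO$ is stable under $\mathscr C^1$ diffeomorphisms, and worth keeping.
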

 
 Sobolev spaces provide an interesting class of functions in $\VMO$, since, for critical exponents, the embeddings which fail to be in $L^\infty$ hold true in $\VMO$:
 \[
  W^{s, p}(N) \subset \VMO(N) \qquad \textrm{whenever } 0 < s < n, \: sp = n.
 \]
 In general, $\VMO$ functions do not have a trace on the boundary.
 However, it is possible to introduce a subclass of $\VMO$ for which traces are well defined. We sketch here the construction.
 
 First, we need to embed $N$ as a domain of a bigger manifold $X$,  smooth and without boundary.
Here, we take $X$ as the double of $N$, that is, the manifold we obtain by gluing two copies of $N$ along their boundaries.
Modifying, if necessary, the value of $d$ we can assume that $X\subset \R^d$. 
Also, let $U$ be a tubular neighbourhood of $\partial N$ in $X$, and assume that the nearest-point projection $\pi\colon U \to \partial N$ is well defined. 
Now, we fix $g\in \VMO(\partial N)$ and we extend it to a function $G$, by the formula
 \begin{equation} \label{G}
  G(x) = \begin{cases}
          g(\pi(x)) \chi(x) & \textrm{if } x\in X \cap U \\
          0                 & \textrm{if } x\in X \setminus U
         \end{cases}
 \end{equation}
 where $\chi$ is a cut-off function, which is equal to $1$ near $\partial N$ and vanishes outside $U$. It can be checked that $G\in \VMO(X)$.
 
 We say that a function $u\in \VMO(N)$ has trace $g$ on $\partial N$, and we write $u\in\VMO_g(N)$, if and only if the function defined by
  \[
   \begin{cases}
    u & \textrm{in } N \\
    G & \textrm{in } X\setminus N
   \end{cases}
  \]
 is in $\VMO(X)$.
 This definition is independent on the choice of $\chi$ and of $X$ (see \cite[Property 6]{BN2}). The notion of $\VMO_g$ is stable under diffeomorphism: 
 suppose $\varphi\colon X_1 \to X_2$ is a $\mathscr C^1$ diffeomorphism between bounded manifolds, mapping diffeomorphically $\partial X_1$ onto $\partial X_2$ . 
 If $g\in \VMO(\partial X_2)$ and $u\in\VMO_g(X_2)$, then 
 \[
  u\circ\varphi \in \VMO_{g\circ\varphi}(X_1).
 \]
 As an example of $\VMO$ functions with trace, let us mention that every map in $W^{1, n}(X)$ has a trace in the sense of $\VMO$, which coincides with the Sobolev trace.


 \subsection{Combing an unbounded manifold in VMO}
 \label{subsect: unbounded}
 
 In this section, we prove Proposition~\ref{prop: unbounded}.
 Of course, it could be obtained as a corollary of our main result, Theorem~\ref{th: VMO morse}.
 Anyway, it can be proved independently, and we present here an elementary argument inspired by \cite[Theorem 2.28]{hatcher}.
 We assume that $N$ is a compact, connected $n$-manifold \emph{without boundary}, embedded as an hypersurface of $\R^{n + 1}$.
 
 \begin{proof}[Proof of Proposition~\ref{prop: unbounded}]
 It is well-known that, if $\chi(N) = 0$, then a nowhere vanishing, smooth (hence $\VMO$) vector field on $N$ exists.
 The idea of the proof is the following:  One picks an arbitrary continuous field, approximates it with a field $v$ having a finite number of zeros, 
 then uses the Poincar\'e-Hopf formula and the hypothesis $\chi(N) = 0$ to show that $\ind(v, \, N) = 0$, so $v$ can be modified into a nowhere vanishing field. 
  This argument is given in detail in the proof of Proposition~\ref{prop: CNS}, in case $N$ is a manifold with boundary, and it is even simpler when $\partial N = \emptyset$.
  
  Let us prove the other side of the proposition: we suppose that a tangent vector field \mbox{$v\in \VMO(N)$} such that $\infess_N\abs{v} > 0$ exists, and we claim that $\chi(N) = 0$.
  Every compact hypersurface of $\R^{n + 1}$ is orientable, so there is a smooth unit vector field $\gamma\colon N \to \R^{n + 1}$ such that $\gamma(x)\perp T_x N$ for all $x\in N$. 
  The choice of such a map induces an orientation on $N$, and $\gamma$ is called the Gauss map of the oriented manifold $N$. 
  We can also assume that $n$ is even, since $\chi(N) = 0$ whenever $N$ is a compact, unbounded manifold of odd dimension (see, e.g., \cite[Corollary 3.37]{hatcher}).
  
  Consider the function $H\colon N\times [0, \, \pi] \to \R^{n + 1}$ given by
  \[
   H(x, \, t) :=  (\cos t) \gamma(x) + (\sin t) w(x), 
  \]
  where $w:=\frac{v}{\vert v\vert}$. The function $H$ is clearly well defined since $\infess_N\abs{v} > 0$ by assumption. Moreover,
  it is readily checked that $\abs{H(x, t)}^2 = 1$ for all $(x, t)\in N\times [0, \, \pi]$. We claim that
  \begin{equation} \label{H cont}
   H\in \mathscr C \left([0, \, \pi], \, \VMO(N, \, \S^n) \right) .
  \end{equation}
  Indeed,  $H(\cdot, t)$ is the linear combination of functions in $\VMO(N)$ and hence belongs to $\VMO(N)$, for all $t$. On the other hand, for all $t_1, \, t_2\in [0, \pi]$
  \[
   \norm{H(\cdot, \, t_1) - H(\cdot, \, t_2)}_{\BMO} \leq \abs{\cos t_1 - \cos t_2} \norm{\gamma}_{\BMO} +  \abs{\sin t_1 - \sin t_2} \norm{w}_{\BMO} ,
  \]
  whence the claimed continuity~\eqref{H cont} follows.
  
  Since the degree is a continuous function $\VMO(N, \, \S^n) \to \mathbb Z$ (see \cite[Theorem 1]{BN1}), we infer that
  \[
   \deg(H(\cdot, \, 0), \, N, \, \S^n) = \deg(H(\cdot, \, \pi), \, N, \, \S^n) .
  \]
  On the other hand, $H(\cdot, \, 0) = \gamma$ and $H(\cdot, \, \pi) = -\gamma$. 
  By standard properties of the degree (in particular, \cite[Properties~(d, f) p. 134]{hatcher}), and since we have assumed that $n$ is even, we have
  \[
    \deg(-\gamma, \, N, \, \S^n) = (-1)^{n + 1} \deg(\gamma, \, N, \, \S^n) = - \deg (\gamma, \, N, \, \S^n) ,
  \]
  hence
  \[
   \deg(\gamma, \, N, \, \S^n) = - \deg(\gamma, \, N, \, \S^n) .
  \]
By the degree formula ~\eqref{deg_formula} and Gauss-Bonnet Theorem (see, e.g.,~\cite[page 196]{GuiPol74}), for an even-dimensional hypersurface $N$
 \[
 	\deg(\gamma, \, N, \, \S^n) = \deg(\gamma, \, N, \, \S^n) \fint_{\S^n} \d\sigma_n = \frac{1}{\omega_n}\int_N \gamma^*(\d\sigma_n)= \frac{1}{\omega_n}\int_N \kappa \,\d\sigma=    \frac12 \chi(N),
\]
where $\d\sigma_n$ is the volume form of $\S^n$, $\omega_n:=\int_{\S^n}\d\sigma_n$ is the volume of $\S^n$, and $\kappa$ is the Gaussian curvature of $N$.
Since $\deg(\gamma, \, N, \, \S^n)=0$ by the above construction, this shows that $\chi(N)=0$ and thus completes the proof.
 \end{proof} 
 
 \begin{remark}
  When $\chi(N)\neq 0$, Proposition~\ref{prop: unbounded} shows that there is no unit vector field in the critical Sobolev space $W^{s, p}(N)$, for $0 < s < n$ and $sp = n$.
  In contrast, when $sp < n$ it is not difficult to construct unit vector fields in $W^{s, p}(N)$.
  For instance, on $N = \S^{2k}$ one may consider a field with two ``hedgehog'' singularities, of the form $x \mapsto x/\abs{x}$, located at the opposite poles of the sphere.
 \end{remark}
 
 \subsection{The index and the inward boundary index}
 \label{subsect: cont index}
 
 Before studying the index in the VMO setting, we recall the definition of the index in the simplest setting, i.e. we work with smooth vector fields having a finite number of zeros.
 As we mentioned in the Introduction, we use local charts.
 Consider a smooth vector field on~$N$, i.e. a map~$v\colon N\to \R^d$ which satisfies~$v(x)\in T_x N$ for any~$x\in N$. We assume that
 \begin{equation} \label{v-no-zero-bd} 
  0 \notin v(\partial N) .
 \end{equation}
 Let $f \colon V \to \R^n$ be a chart defined on an open set $V\csubset N\setminus\partial N$.
 We consider the (smooth) map $f_* v\colon f(V) \subset \R^n \to \R^n$ defined by
 \begin{equation} \label{v-coordinates}
  f_*v(y) := \d f_{f^{-1}(y)}\left( v\circ f^{-1}(y) \right) \qquad \textrm{for } y\in f(V)\subset\R^n 
 \end{equation}
 (there is a slight abuse of notation here, because~$\d f_{f^{-1}(y)}$ is defined on~$T_{f^{-1}(y)} N$ whereas $v$ is an~$\R^d$-valued map). 
 For any~$x\in v^{-1}(0)$ and a chart~$f$ defined in a neighbourhood of~$x$, we assume that
 \begin{equation} \label{v-transverse} 
  \d (f_*v)_{f(x)} \textrm{ is invertible} .
 \end{equation} 
 If $f$, $g$ are two local charts around $x$, then $\d (f_*v)_{f(x)}$ is invertible if and only if $\d(g_*v)_{g(x)}$ is, condition~\eqref{v-transverse} is thus independent of the choice of the chart.
 Condition~\eqref{v-transverse} also implies that the set $v^{-1}(0)$ is discrete (by the local inversion theorem), hence is finite because $X$ is compact.
 Moreover, given two coordinate charts $f$ and $g$ which agree with the fixed orientation of $X$,
 the Jacobians $\det \d (f_*v)_{f(x)}$ and $\det \d (g_*v)_{g(x)}$ have the same sign. 
 Thus, if $U\subset X$ is an open set and $v$ is a transverse vector field on $X$ satisfying~\eqref{v-no-zero-bd}--\eqref{v-transverse}, the index of $v$ on $U$ is well-defined by the formula
 \begin{equation} \label{transv index}
  \ind(v, \, N) := \sum_{x\in v^{-1}(0)} \mathrm{sign } \det \d (f_* v)_{f(x)} .
 \end{equation}
 This formula can be expressed in an equivalent way.
 Pick a geodesic ball $B_r(x)\csubset U$ around each zero $x$, so small that no other zero is contained in $B_r(x)$.
 Then, $\frac{f_*v}{\abs{f_*v}}$ is well-defined as a map $\partial B_r(x)\simeq \S^{n - 1} \to \S^{n - 1}$, and 
 \begin{equation} \label{smooth index}
  \ind(v, \, N) = \sum_{x\in v^{-1}(0)} \deg \left(\frac{f_*v}{\abs{f_*v}}, \, \partial B_r(x), \, \S^{n - 1}\right) .
 \end{equation}
 The equivalence of~\eqref{transv index} and~\eqref{smooth index} follows, e.g., from \cite[Equation (4.1), p. 25]{BN2}.
 
 Given any continuous vector field~$v$ satisfying~\eqref{v-no-zero-bd}, one can approximate it with a sequence of smooth fields~$v_j$ satisfying~\eqref{v-no-zero-bd} and~\eqref{v-transverse}, 
 which converge to~$v$ in the uniform norm.
 Moreover, for large~$j$ the indices~$\ind(v_j, \, N)$ take a common value which depends only on~$v$.
 This allows to define the index for a continuous field satisfying only~\eqref{v-no-zero-bd} (see Definition~\ref{def: cont index}).
 This construction is a particular case of more general results, which are discussed e.g. in~\cite[Chapter~5]{hirsch}.
 For the convenience of the reader, we sketch it in Appendix~\ref{app: cont index}.
 
 To describe the behaviour of~$v$ at the boundary, we need to introduce another quantity. 
 For any $x\in \partial N$, denote with $\nu(x)$ the outward unit normal to $\partial N$ in $T_x N$. Then, we introduce the set
 \begin{equation} \label{inward bd}
  \partial_-N[v] := \left\{x\in\partial N \colon v(x)\cdot\nu(x) < 0 \right\} ,
 \end{equation}
 called the inward boundary, which is open in $\partial N$. (We simply write  $\partial_- N$, when $v$ is clear from the context).
 The tangential component $P_{\partial N} v$ defines a vector field over $\partial_- N$ and, despite $0 \notin v(\partial N)$, it is possible that $P_{\partial N} v$ vanishes at some point.
 However, $P_{\partial N} v$ does not vanish on~$\partial(\partial_- N)$. Indeed, 
 \[
  \partial (\partial_- N) = \left\{x\in\partial N\colon v(x)\cdot \nu(x) = 0 \right\} ,
 \]
 hence if $x\in\partial (\partial_-N)$ we have $P_{\partial N} v (x) = v(x) \neq 0$.
 Thus, for any continuous field~$v$ satisfying~\eqref{v-no-zero-bd} we can define the inward boundary index by
  \[
  \ind_- (v, \, \partial N) := \ind(P_{\partial N}v, \, \partial_- N),
  \]
 where the right-hand side has a meaning in view of Definition~\ref{def: cont index}.
 Notice that the inward boundary index depends only on $v_{|\partial N}$.
 Hence, it make sense to compute it for a continuous map~$g$ defined only on $\partial N$, provided that $g$ is tangent to $N$ and vanishes nowhere.
 
 The index and the inward boundary index satisfy some properties, such as excision, homotopy invariance, and stability with respect to uniform convergence. 
 For future reference, we provide a quantitative statement for the latter property. The following proposition is proved in Appendix~\ref{app: cont index}.
 
 \begin{prop} \label{prop: stability}
  Let $v$ be a continuous field on~$N$ satisfying~\eqref{v-no-zero-bd}.
  There exists $\varepsilon_1=\varepsilon_1(v)>0$ such that, for any other continuous vector field~$w$ satisfying~\eqref{v-no-zero-bd}, if
  \begin{equation*}
   \norm{v - w}_{\mathscr C(\partial N)} < \varepsilon_1
  \end{equation*}
  then~$\ind(v, \, N) = \ind(w, \, N)$ and~$\ind_-(v, \, \partial N) = \ind_-(w, \, \partial N)$. For example, an admissible choice of $\varepsilon_1$ is 
  \[
    \varepsilon_1:=\frac{\sqrt 5 -1}{4}\min_{\partial N}{|v|}.
  \]
 \end{prop}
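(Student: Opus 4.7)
Both indices are, by construction, invariant under continuous homotopies of the field that keep the relevant degree well-defined, so the strategy is to build an explicit admissible homotopy between $v$ and $w$ and invoke homotopy invariance of the Brouwer degree. Setting $m := \min_{\partial N}|v|$ and $\epsilon_1 := \frac{\sqrt{5}-1}{4}m$, the straight-line interpolation $h_s := (1-s)v + sw$ for $s \in [0,1]$ satisfies on $\partial N$
\[
  |h_s(x)| \geq |v(x)| - s\|v - w\|_{\mathscr C(\partial N)} > m - \epsilon_1 > 0,
\]
so the whole family $(h_s)$ fulfils~\eqref{v-no-zero-bd}.

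For the stability of $\ind(\cdot, N)$, I would use the tubular-neighbourhood characterisation sketched in Remark~\ref{remark: tubular}: on a small tubular neighbourhood $U \subset \R^d$ of $N$ with nearest-point projection $\pi$, the extension $\widehat h_s(x) := h_s(\pi(x)) + (x - \pi(x))$ satisfies $|\widehat h_s(x)|^2 = |h_s(\pi(x))|^2 + |x - \pi(x)|^2$ (by orthogonality of the two summands in $\R^d$) and $\ind(h_s, N) = \deg(\widehat h_s, U, 0)$. On $\partial U$ the right-hand side is strictly positive: the normal summand is bounded below by the tubular radius on the lateral part of $\partial U$, while on the part lying over $\partial N$ the tangential summand is controlled by the estimate above. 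Hence $\widehat h_s$ is non-vanishing on $\partial U$ for every $s$, and homotopy invariance of the degree yields $\ind(v, N) = \ind(w, N)$.

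For $\ind_-$, the subtlety is that $\partial_- N[v]$ and $\partial_- N[w]$ are different open subsets of $\partial N$; I would therefore work on the common compact domain $\Omega := \overline{\partial_- N[v]} \cup \overline{\partial_- N[w]}$. Decomposing $v = a\nu + v_\tau$ and $w = a'\nu + w_\tau$, any point $x \in \partial\Omega$ satisfies $a(x) = 0$ with $a'(x) \geq 0$ (or vice versa), so that either $|v_\tau(x)|$ or $|w_\tau(x)|$ equals the full norm of $v(x)$ or $w(x)$ and is therefore at least $m - \epsilon_1$. Combined with $\|v_\tau - w_\tau\|_{\mathscr C(\partial N)} < \epsilon_1$, this forces $\tau_s := P_{\partial N} h_s = (1-s)v_\tau + sw_\tau$ to stay non-vanishing on $\partial\Omega$ for every $s$. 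Moreover, on $\Omega \setminus \partial_- N[v] \subset \{0 \leq a < \epsilon_1\}$ one has $|v_\tau|^2 \geq m^2 - \epsilon_1^2 > 0$, so an excision argument identifies $\deg(v_\tau/|v_\tau|, \partial\Omega, \S^{n-2})$ with $\ind(P_{\partial N} v, \partial_- N[v]) = \ind_-(v, \partial N)$; the analogous identity holds for $w$. Homotopy invariance of the degree applied to $\tau_s$ over $\partial\Omega$ then concludes.

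\emph{Main obstacle.} The delicate point is the $\ind_-$ step: because the inward boundary itself is $v$-dependent, the two indices are genuinely computed on different sets, and one cannot directly homotope them. The trick is to enclose both inward boundaries in a common compact domain on which the tangential part of the interpolating field stays non-vanishing throughout $s \in [0,1]$; sharpening the chain of triangle inequalities to make all of the above estimates work simultaneously produces the explicit constant $\frac{\sqrt{5}-1}{4}$.
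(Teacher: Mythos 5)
Your proof is correct and follows the same overall template as the paper --- stability for the interior index, then a combination of homotopy invariance and excision to handle the $v$-dependent inward boundary --- but the bookkeeping for the boundary index is genuinely different. The paper works asymmetrically: it introduces the ``transition region'' $\overline{U_+}\setminus U_-$ defined by thresholds on $w\cdot\nu/|w|$, shows $|P_{\partial N}w|\geq\sqrt{(c-\varepsilon_1)^2-4\varepsilon_1^2}$ there, first applies stability (Corollary~\ref{cor: stability}) on the \emph{fixed} domain $\partial_-N[v]$ to pass from $P_{\partial N}v$ to $P_{\partial N}w$, and only then excises to transfer from $\partial_-N[v]$ to $\partial_-N[w]$. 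You instead work symmetrically on the common compact domain $\Omega=\overline{\partial_-N[v]}\cup\overline{\partial_-N[w]}$: you homotope the field on $\Omega^\circ$ and then excise \emph{twice}, once for $v$ and once for $w$, back to the respective inward boundaries. Both arguments go through; yours is arguably more transparent about why a common domain is needed, while the paper's ``transition region'' makes the quantitative bound on $|P_{\partial N}w|$ drop out immediately. For the interior index you invoke the tubular-neighbourhood degree of Remark~\ref{remark: tubular}, whereas the paper simply applies Corollary~\ref{cor: stability} directly; either is fine.

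Two minor points worth flagging. First, the notation $\deg(v_\tau/|v_\tau|,\partial\Omega,\mathbb S^{n-2})$ is not literally well-defined (there is no canonical trivialisation of $T(\partial N)$), and should be read as $\ind(P_{\partial N}v,\Omega^\circ)$ in the paper's framework. Second, your chain of estimates needs $m-2\varepsilon_1>0$ (for the homotopy $\tau_s$ on $\partial\Omega$) and $(m-\varepsilon_1)^2-\varepsilon_1^2>0$ (for the excision applied to $w_\tau$ on $\Omega\setminus\partial_-N[w]$), both of which reduce to $\varepsilon_1<m/2$; the value $\frac{\sqrt5-1}{4}m\approx0.309\,m$ certainly satisfies this, so the proposition is proved, but the closing remark that ``sharpening the chain of triangle inequalities produces the explicit constant $\frac{\sqrt5-1}{4}$'' is inaccurate for your argument. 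That constant is the positive root of $4\varepsilon_1^2+2c\varepsilon_1-c^2=0$, which arises specifically from the paper's two-step estimate $|P_{\partial N}v-P_{\partial N}w|<\varepsilon_1=\sqrt{(c-\varepsilon_1)^2-4\varepsilon_1^2}\leq|P_{\partial N}w|$ on $\overline{U_+}\setminus U_-$, not from the bounds you actually derive --- your route in fact gives the (slightly better) threshold $\varepsilon_1<m/2$.
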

 
 Morse's index formula (see Proposition~\ref{prop: morse index}) holds true for arbitrary continuous fields. 

 \begin{prop} \label{prop: continuous index formula}
  Let $v$ be a continuous vector field on $N$, such that $0\notin v(\partial N)$. Then,
  \[
   \ind(v, \, N) + \ind_-(v, \, \partial N) = \chi(N) .
  \]
 \end{prop}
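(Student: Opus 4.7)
The plan is to reduce the statement to the already-established Proposition~\ref{prop: morse index} via a density-plus-stability argument. By continuity of~$v$ and compactness of~$\partial N$, the hypothesis $0\notin v(\partial N)$ gives $m := \min_{\partial N}\abs{v} > 0$, so the quantity $\varepsilon_1(v) = (\sqrt 5 - 1)m/4$ provided by Proposition~\ref{prop: stability} is strictly positive. It therefore suffices to exhibit a smooth tangent field~$w$ on~$N$ with $\norm{v - w}_{\mathscr C(\partial N)} < \varepsilon_1(v)$ which satisfies the hypotheses of Proposition~\ref{prop: morse index}: the identity
\[
 \ind(w, \, N) + \ind_-(w, \, \partial N) = \chi(N)
\]
will then transfer to~$v$ by the stability of both indices under uniform perturbations on~$\partial N$.

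The construction of~$w$ proceeds in two stages. First I would smooth~$v$ by convolution in a tubular neighbourhood of~$N$ followed by tangential projection onto~$TN$, obtaining a smooth tangent field~$\tilde v$ uniformly close to~$v$ (uniform closeness is preserved since~$v$ is already tangent). Then, invoking the transversality machinery recalled in Appendix~\ref{app: cont index}, I would perturb~$\tilde v$ into a smooth field~$w$, uniformly close to~$\tilde v$, such that three conditions hold simultaneously: (i)~$w$ is transverse to the zero section of~$TN$ over the interior $N\setminus\partial N$, so $w^{-1}(0)$ is finite with non-degenerate zeros; (ii)~the scalar map $w\cdot\nu$ is transverse to~$0$ on~$\partial N$, so that $\partial(\partial_- N[w])$ is a smooth $(n-2)$-submanifold of~$\partial N$; (iii)~the tangential field $P_{\partial N} w$ is transverse to the zero section of $T(\partial N)$ and its zero set is disjoint from $\partial(\partial_- N[w])$, so $(P_{\partial N} w)^{-1}(0)\cap\partial_- N[w]$ is finite with non-degenerate zeros. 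Conditions (i)--(iii) place~$w$ squarely in the hypotheses of Proposition~\ref{prop: morse index}.

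The main obstacle is engineering (ii) and (iii) jointly on~$\partial N$: one must perturb the normal scalar $w\cdot\nu$ and the tangential field $P_{\partial N} w$ so that each is transverse to zero and such that the zeros of $P_{\partial N} w$ meet the submanifold $\{w\cdot\nu = 0\}$ transversely---which, by dimension count, forces them to be disjoint. This is a non-automatic but standard application of the Transversality Theorem to the pair $(w\cdot\nu, \, P_{\partial N} w)$ viewed as a section of the bundle $\R\oplus T(\partial N)$ over~$\partial N$, combined with a separate interior perturbation achieving (i); openness of transversality then lets one compose the two perturbations. Once such~$w$ is in hand, Proposition~\ref{prop: morse index} applies to~$w$, Proposition~\ref{prop: stability} yields $\ind(v, \, N) = \ind(w, \, N)$ and $\ind_-(v, \, \partial N) = \ind_-(w, \, \partial N)$, and substitution into the displayed identity completes the proof.
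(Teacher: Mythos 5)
Your proposal follows essentially the same route as the paper: approximate $v$ by a smooth field satisfying the hypotheses of the classical Morse formula (Proposition~\ref{prop: morse index}), apply that formula, and transfer back to $v$ via the stability of both indices (Proposition~\ref{prop: stability}). However, two pieces of your argument, though not fatal, are either off target or unnecessary and should be flagged.

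First, your mechanism for achieving conditions (ii) and (iii) is misstated. Transversality of the combined section $(w\cdot\nu,\, P_{\partial N}w)$ of $\R\oplus T(\partial N)$ to the \emph{zero section of that bundle} is a condition only at points where both components vanish simultaneously --- that is, where $w$ itself vanishes on $\partial N$. Since $0\notin w(\partial N)$, there are no such points, so that transversality condition is vacuous and yields neither (ii) nor (iii). To get (ii) and (iii) you would need two separate applications (to $w\cdot\nu$ as a section of the trivial line bundle, and to $P_{\partial N}w$ as a section of $T(\partial N)$), then invoke openness to combine. Relatedly, the ``dimension count'' that zeros of $P_{\partial N}w$ miss $\{w\cdot\nu = 0\}$ is superfluous: this disjointness is automatic, since $P_{\partial N}w(x)=0$ and $w(x)\cdot\nu(x)=0$ jointly force $w(x)=0$, contradicting $0\notin w(\partial N)$ (the paper records exactly this remark in Section~\ref{subsect: cont index}).

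Second, condition (ii) is not needed at all: Proposition~\ref{prop: morse index} only requires $w$ and $P_{\partial N}w$ to have finitely many zeros, so only your (i) and (iii) are relevant. Finally, you gloss over the construction of the field $w$ on $N$ once the boundary perturbation is chosen; the paper does this explicitly by fixing a transverse $\xi$ on $\partial N$, interpolating linearly between $\xi$ and $P_{\partial N}\tilde v$ across a collar while keeping the normal component of $\tilde v$ unchanged (so no new zeros appear in the collar), and only then applying the Transversality Theorem away from the collar with the collar values frozen. Spelling this out is what makes the approximation honest; ``openness of transversality'' alone does not produce the required extension.
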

 
 This is a special case of a more general Poincar\'e-Hopf type formula in~\cite{Pugh}. For completeness, we present its proof  in Appendix~\ref{app: cont index}.



 \section{The index in the VMO setting}
 \label{sect: VMO index}

 \subsection{Proof of Theorem~\ref{th: VMO morse}}
 \label{subsect: main-theorem}
 
In this section we define the index of a $\VMO$ field and then we prove our main results.
From now on, $X$ will be taken to be the topological double of $N$, as in Section~\ref{sect: VMO}.
Moreover, throughout this section we consider a function $g\in \VMO(\partial N)$ such that
\begin{equation} \label{hp g}
  g(x)\in T_x N  \qquad \textrm{and} \qquad  c_1 \leq \abs{g(x)} \leq c_2 \qquad \textrm{for } \H^{n - 1}\textrm{-a.e. } x\in \partial N
\end{equation}
for some constants $c_1, \,c_2 > 0$.
Let $v$ be a $\VMO$ vector field with trace $g$, that is,
\begin{equation} \label{hp v}
 v\in \VMO_g(N), \qquad v(x)\in T_xN \qquad \textrm{for } \textrm{a.e. } x\in N.
\end{equation}
By definition of $\VMO_g(N)$, the function $u$ given by
\[
 u:= \begin{cases}
      v & \textrm{on } N \\
      G & \textrm{on } X\setminus N ,
     \end{cases}
\]
where $G$ is the extension of $g$ defined in~\eqref{G}, is in $\VMO(X)$.
 Denote the local averages of $u$ and $g$ by
\[ 
  \bar u_\varepsilon (x) := \fint_{B_\varepsilon^X(x)} u(y) \, \d \sigma(y) , \qquad \textrm{for } x\in X.
 \]
 and
\[
  \bar g_\varepsilon (x) := \fint_{B_\varepsilon^{\partial N}(x)} g(y) \, \d \H^{n-1}(y) , \qquad \textrm{for } x\in \partial N.
\]
Consider the functions 
\begin{equation}
\label{def:uege}
 u_\varepsilon := P_X \bar u_\varepsilon \qquad \textrm{and} \qquad g_\varepsilon := P_X \bar g_\varepsilon, 
\end{equation}
defined on $X$ and $\partial N$, respectively, which are continuous and tangent to $X$.
As we will prove in the following Lemma~\ref{lemma: g_epsilon positive}, Lemma~\ref{lemma: u_eps bd}, 
and Lemma~\ref{lemma: constant}, the quantities $\ind(u_\varepsilon, \, N)$ and $\ind_-(g_\varepsilon, \, \partial N)$ are well-defined and constant with respect to $\varepsilon$, for $\varepsilon$ small enough.

\begin{defn} \label{def: VMO index}
 Given $g\in \VMO(\partial N)$ and $v$ which satisfy~\eqref{hp g}--\eqref{hp v}, we define the index and the inward boundary index of $v$ by
 \[
  \ind(v, \, N) := \ind(u_\varepsilon, \, N) \qquad \textrm{and} \qquad \ind_-(v, \, \partial N) := \ind_-(g_\varepsilon, \, \partial N) ,
 \]
 where $\varepsilon$ is fixed arbitrarily in $(0, \, \varepsilon_0)$ and $\varepsilon_0$ is given by Lemma~\ref{lemma: constant}.
\end{defn}

Once we have checked that the index, in the sense of Definition~\ref{def: VMO index}, is well-defined, Theorem~\ref{th: VMO morse} will follow straightforwardly from 
Proposition~\ref{prop: continuous index formula}.
However, before directing our attention to the main theorem, there are some facts which need to be checked. 
 
The next two lemmas compare the behaviour of $g_\varepsilon$ and ${u_\varepsilon}_{|\partial N}$.

\begin{lemma} \label{lemma: g_epsilon positive}
 For every $\delta > 0$, there exists $\varepsilon_0 \in (0, \, r_0)$ so that, for all $\varepsilon\in (0, \, \varepsilon_0)$ and all $x\in \partial N$, we have
 \[
  c_1 - \delta \leq \abs{g_\varepsilon (x)} \leq c_2 + \delta .
 \]
\end{lemma}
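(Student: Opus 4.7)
The plan is to treat the two inequalities separately. The upper bound will be essentially automatic from the contraction property of the projection, while the lower bound requires combining a geometric estimate (that exploits the tangency of $g$) with the $\VMO$ control of $g$ on $\partial N$.

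For the upper bound, since $P_{T_xX}\colon \R^d \to T_xX$ is a $1$-Lipschitz linear map with $P_{T_xX}(0) = 0$, and since $L^\infty$-norms are not increased under averaging,
\[
 |g_\varepsilon(x)| = |P_{T_xX}\bar g_\varepsilon(x)| \leq |\bar g_\varepsilon(x)| \leq \fint_{B_\varepsilon^{\partial N}(x)}|g(y)|\,\d\H^{n-1}(y) \leq c_2,
\]
so the upper bound $c_2 + \delta$ holds uniformly in $x$ and $\varepsilon$, with $\delta$ to spare.

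For the lower bound, the key observation is that $g(y) \in T_yN \subset T_yX$ for $\H^{n-1}$-a.e.\ $y\in \partial N$, hence $g(y) = P_{T_yX}g(y)$ and so
\[
 P_{T_xX}g(y) - g(y) = \bigl(P_{T_xX} - P_{T_yX}\bigr)g(y).
\]
Because $X$ is a smooth compact submanifold of $\R^d$, the orthogonal projector $P_{T_zX}$ is a Lipschitz function of $z\in X$: $\|P_{T_xX} - P_{T_yX}\|_{\mathrm{op}} \leq C\,\dist_X(x, y)$ for a constant $C$ depending only on $X$. Averaging over $y\in B_\varepsilon^{\partial N}(x)$, by linearity of $P_{T_xX}$ and the bound $|g|\leq c_2$,
\[
 |g_\varepsilon(x) - \bar g_\varepsilon(x)| = \left|\fint_{B_\varepsilon^{\partial N}(x)}\bigl(P_{T_xX} - P_{T_yX}\bigr)g(y)\,\d\H^{n-1}(y)\right| \leq Cc_2\varepsilon.
\]

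It remains to control $|\bar g_\varepsilon(x)|$ from below, which is where the $\VMO$ hypothesis enters. Applying Lemma~\ref{lemma: VMO} on $\partial N$ (which is a manifold without boundary, so the ``interior'' restriction in the BMO seminorm is vacuous), for any $\eta > 0$ there exists $\varepsilon_1 > 0$ such that
\[
 \sup_{x\in\partial N}\fint_{B_\varepsilon^{\partial N}(x)}|g(y) - \bar g_\varepsilon(x)|\,\d\H^{n-1}(y) < \eta \qquad \textrm{for all } \varepsilon \in (0,\varepsilon_1).
\]
Combined with $|g(y)|\geq c_1$ a.e., this forces $|\bar g_\varepsilon(x)| \geq c_1 - \eta$, and therefore
\[
 |g_\varepsilon(x)| \geq |\bar g_\varepsilon(x)| - Cc_2\varepsilon \geq c_1 - \eta - Cc_2\varepsilon.
\]
Choosing $\eta = \delta/2$, then $\varepsilon_0 := \min\{\varepsilon_1,\, \delta/(2Cc_2),\, r_0\}$, concludes the argument. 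The only points requiring some care are the Lipschitz dependence of $P_{T_zX}$ on $z$ (which follows from smoothness of $X$ as a compact submanifold of $\R^d$) and the application of Lemma~\ref{lemma: VMO} uniformly on $\partial N$; neither presents a real obstacle.
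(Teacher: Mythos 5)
Your proof is correct, and it takes a genuinely different route from the one in the paper. The paper estimates $\dist(\bar g_\varepsilon(x), S_x)$ where $S_x:=\{v\in T_xN\colon c_1\leq|v|\leq c_2\}$, splits this into a VMO term and a geometric term $\fint\dist(g(y), S_x)$, and controls the geometric term by a compactness argument (a contradiction argument establishing the uniform convergence~\eqref{geom conv}); it then passes from $\bar g_\varepsilon$ to $g_\varepsilon$ via the separate Lemma~\ref{lemma: overline}. You instead split the two bounds. Your upper bound comes for free from the contraction property of the orthogonal projection together with $|g|\leq c_2$ a.e., and in fact yields $|g_\varepsilon|\leq c_2$ with no $\delta$ needed, which is sharper than what the paper's argument gives. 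For the lower bound you replace both the compactness argument and the invocation of Lemma~\ref{lemma: overline} by a single quantitative Lipschitz estimate $\|P_{T_xX}-P_{T_yX}\|_{\mathrm{op}}\leq C\dist(x,y)$, which, combined with $|g|\leq c_2$, gives $|g_\varepsilon(x)-\bar g_\varepsilon(x)|\leq Cc_2\varepsilon$ explicitly. This is essentially the observation that the paper itself credits to Van Schaftingen in Remark~\ref{remark:shaftingen} as a way to remove the upper bound hypothesis (there $\dist(v,T_xN)\leq C|v|\dist(x,y)$ is paired with Jensen and the $\VMO\hookrightarrow L^n$ embedding instead of $|g|\leq c_2$). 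Your version is more explicit and self-contained than the paper's compactness argument, at the price of using the $L^\infty$ bound on $g$, which is available here anyway.

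One small remark: your deduction "$\fint|g(y)-\bar g_\varepsilon(x)|<\eta$ and $|g|\geq c_1$ a.e.\ imply $|\bar g_\varepsilon(x)|\geq c_1-\eta$" is correct but worth spelling out (average the inequality $c_1\leq|\bar g_\varepsilon(x)|+|g(y)-\bar g_\varepsilon(x)|$ over $y\in B_\varepsilon^{\partial N}(x)$); as stated it could look like you are applying the triangle inequality pointwise without averaging.
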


\begin{lemma} \label{lemma: u_eps bd}
 It holds that
 \[
  \lim_{\varepsilon\to 0} \sup_{x\in \partial N} \abs{u_\varepsilon(x) - g_\varepsilon(x)} = 0 .
 \]
\end{lemma}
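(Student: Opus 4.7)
The plan is to control $|u_\varepsilon(x)-g_\varepsilon(x)|$ by the Euclidean difference $|\bar u_\varepsilon(x)-\bar g_\varepsilon(x)|$. For each $x\in\partial N$, $\proj_{T_xX}$ is the orthogonal projection onto a linear subspace, hence $1$-Lipschitz, so
\[
|u_\varepsilon(x)-g_\varepsilon(x)|\leq|\bar u_\varepsilon(x)-\bar g_\varepsilon(x)|,
\]
and it suffices to prove the latter tends to $0$ uniformly on $\partial N$. Fix $x_0\in\partial N$ and introduce Fermi coordinates $(y',t)$ near $\partial N$ in $X$, with $y'=\pi(y)$ and $t$ the signed distance to $\partial N$ (positive outside $N$). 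In these coordinates the volume form equals $(1+O(t))\,\d\H^{n-1}(y')\,\d t$, and for $\varepsilon$ small enough that $B_\varepsilon^X(x_0)\subset U$ and $\chi\equiv 1$ thereon, $B_\varepsilon^X(x_0)$ is a smooth $O(\varepsilon^2)$-perturbation of the flat ball $\{d_{\partial N}(y',x_0)^2+t^2\leq\varepsilon^2\}$; in particular its two halves $B_\varepsilon^X(x_0)\cap\{\pm t>0\}$ each have volume $\tfrac12|B_\varepsilon^X(x_0)|(1+O(\varepsilon))$.

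Next, I introduce the auxiliary quantity
\[
\tilde u_\varepsilon(x_0):=\fint_{B_\varepsilon^X(x_0)}g(\pi(y))\,\d\sigma(y).
\]
Fubini in Fermi coordinates realizes $\tilde u_\varepsilon(x_0)$ as a weighted average of $g$ on $B_\varepsilon^{\partial N}(x_0)$, with weight proportional to $\sqrt{\varepsilon^2-d_{\partial N}(y',x_0)^2}$; after normalization the weight is a kernel bounded uniformly in $x_0$ and $\varepsilon$, with unit total mass up to $o(1)$. Expressing $\tilde u_\varepsilon-\bar g_\varepsilon$ as the integral of $g(y')-\bar g_\varepsilon(x_0)$ against this kernel minus $1$ then yields
\[
|\tilde u_\varepsilon(x_0)-\bar g_\varepsilon(x_0)|\leq C\fint_{B_\varepsilon^{\partial N}(x_0)}|g(y')-\bar g_\varepsilon(x_0)|\,\d\H^{n-1}(y')+o(1),
\]
which tends to $0$ uniformly in $x_0$ by Lemma~\ref{lemma: VMO} applied to $g\in\VMO(\partial N)$.

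It remains to show $|\bar u_\varepsilon(x_0)-\tilde u_\varepsilon(x_0)|\to 0$ uniformly, and here I would combine two ingredients. First, by construction $u=g\circ\pi$ on $B_\varepsilon^X(x_0)\cap(X\setminus N)$ for $\varepsilon$ small, so the outer-half averages of $u$ and of $g\circ\pi$ coincide. Second, $u\in\VMO(X)$ together with Lemma~\ref{lemma: VMO} and the Chebyshev-type bound $\fint_A|u-\bar u_\varepsilon(x_0)|\,\d\sigma\leq C\fint_{B_\varepsilon^X(x_0)}|u-\bar u_\varepsilon(x_0)|\,\d\sigma$ (valid for any $A\subset B_\varepsilon^X(x_0)$ of measure $\geq\tfrac12|B_\varepsilon^X(x_0)|$) shows that the inner- and outer-half averages of $u$ each differ from $\bar u_\varepsilon(x_0)$ by $o(1)$. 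An analogous $o(1)$ comparison holds for $g\circ\pi$, since this function is $t$-independent and the Fermi volume form is approximately $t\leftrightarrow-t$ symmetric, with the $O(\varepsilon)$ asymmetry absorbed using the bound $|g|\leq c_2$. Putting the two steps together gives $\bar u_\varepsilon(x_0)=\tilde u_\varepsilon(x_0)+o(1)$ uniformly, and combined with the previous paragraph yields $\sup_{x_0\in\partial N}|\bar u_\varepsilon(x_0)-\bar g_\varepsilon(x_0)|\to 0$, as required. The main obstacle is the Fermi-coordinate bookkeeping, namely verifying that the asymmetry of $B_\varepsilon^X(x_0)$ with respect to $\partial N$ and the $O(\varepsilon)$ corrections in the volume form truly contribute only $o(1)$ errors, so that the analytic content reduces to Lemma~\ref{lemma: VMO} applied in turn to $u$ and to $g$.
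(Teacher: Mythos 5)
Your proposal is essentially correct and follows the same decomposition as the paper, namely splitting $\bar u_\varepsilon-\bar g_\varepsilon$ into a piece near the boundary where $u$ is replaced by the extension $G$ (your $\tilde u_\varepsilon$ coincides with $\bar G_\varepsilon$ once $\varepsilon$ is so small that $\chi\equiv 1$ on the ball) plus the Fubini-type comparison $\bar G_\varepsilon - \bar g_\varepsilon$. Two of your moves differ cosmetically from the paper's and are worth noting. First, you reduce to the unprojected averages by observing that $\proj_{T_xX}$ is $1$-Lipschitz, hence $|u_\varepsilon(x)-g_\varepsilon(x)|\leq|\bar u_\varepsilon(x)-\bar g_\varepsilon(x)|$; the paper instead invokes its Lemma on $\sup_x|u_\varepsilon-\bar u_\varepsilon|\to 0$, which is a bit more machinery for this particular reduction, and your shortcut is cleaner here. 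Second, for the term $|\bar u_\varepsilon-\bar G_\varepsilon|$, the paper works directly with the single function $u-G$, which vanishes on $X\setminus N$, and uses the uniform density bound $\sigma(B_\varepsilon^X(x)\setminus N)\geq\alpha\,\sigma(B_\varepsilon^X(x))$ to bound the average by the mean oscillation of $u-G$; your half-ball argument (outer-half averages of $u$ and $g\circ\pi$ coincide; inner/outer half-averages of each are within $o(1)$ of the full average, via the Chebyshev-type bound for $u$ and via the $t$-symmetry for $g\circ\pi$) is a valid repackaging of the same idea, with the same underlying analytic input ($u\in\VMO(X)$, $|g|\leq c_2$, and the positive-density/approximate-symmetry of the ball with respect to $\partial N$). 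For the term $|\bar G_\varepsilon-\bar g_\varepsilon|$, your Fermi/Fubini kernel argument matches the paper's explicit computation in the model half-space $\R^n_+$, which the paper also reduces to via charts and a partition of unity. The Fermi bookkeeping you flag as the main obstacle does work out: the $O(\varepsilon)$ volume-form corrections and the $O(\varepsilon^2)$ deviation of the geodesic ball from the flat one are all absorbed, uniformly in $x_0$ by compactness of $\partial N$, as the paper verifies explicitly via the Taylor bound leading to its density estimate.
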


Combining Lemmas~\ref{lemma: g_epsilon positive} and~\ref{lemma: u_eps bd}, we deduce that there exist constants $\varepsilon_0$,$c > 0$ such that
\[
 \abs{u_\varepsilon(x)} \geq c, \quad \abs{g_\varepsilon(x)} \geq c \qquad \textrm{for all } \varepsilon\in (0, \, \varepsilon_0) \textrm{ and all } x\in \partial N.
\]
In particular,
\[
0 \notin u_\varepsilon(\partial N) \qquad \textrm{and} \qquad 0 \notin g_\varepsilon(\partial N)
\]
so $\ind(u_\varepsilon, \, N)$ and $\ind_-(g_\varepsilon, \, \partial N)$ are well-defined 
for all $\varepsilon \in (0, \, \varepsilon_0)$.
\medskip

Before proving Lemmas~\ref{lemma: g_epsilon positive} and~\ref{lemma: u_eps bd}, we need a useful property.

\begin{lemma} \label{lemma: overline}
 It holds that
 \[
  \lim_{\varepsilon\to 0}\sup_{x\in X} \abs{u_\varepsilon(x) - \bar u_\varepsilon(x)} = \lim_{\varepsilon\to 0}\sup_{x\in \partial N} \abs{g_\varepsilon(x) - \bar g_\varepsilon(x)} = 0 .
 \]
\end{lemma}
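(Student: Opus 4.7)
The plan is to exploit the almost-everywhere tangency of $u$ and $g$ together with the smoothness of the tangent projection. Let $\Pi(\cdot) := P_{T_\cdot X}$ denote the orthogonal-projection-valued map on $X$, which is smooth (and in particular Lipschitz, with some constant $L$, on the compact set $X$) because $X\hookrightarrow\R^d$ is a closed $\mathscr{C}^1$ submanifold. By definition, $u_\varepsilon(x) - \bar u_\varepsilon(x) = (\Pi(x) - \Id)\bar u_\varepsilon(x)$, which is minus the normal component of $\bar u_\varepsilon(x)$ at $x$. Since $u(y) \in T_y X$ for a.e. $y$ --- which we may assume by replacing the extension $G$ in~\eqref{G} with $\Pi G$; this preserves $\VMO_g(N)$ since $\Pi$ is smooth and $g = \Pi g$ at $\partial N$ --- we have $\Pi(y)u(y) = u(y)$, whence
\[
u_\varepsilon(x) - \bar u_\varepsilon(x) = \fint_{B_\varepsilon^X(x)} \bigl(\Pi(x) - \Pi(y)\bigr) u(y)\, \d\sigma(y).
\]
The Lipschitz bound $|\Pi(x) - \Pi(y)| \leq L\varepsilon$ on $B_\varepsilon^X(x)$ then gives
\[
|u_\varepsilon(x) - \bar u_\varepsilon(x)| \leq L\varepsilon \fint_{B_\varepsilon^X(x)} |u(y)|\, \d\sigma(y).
\]

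It remains to control the right-hand side uniformly in $x$ by a factor that grows slowly enough in $1/\varepsilon$. Write $\fint |u| \leq |\bar u_\varepsilon(x)| + \fint |u - \bar u_\varepsilon(x)|$, the second term being bounded by $\|u\|_{\BMO}$; for the first, the standard BMO chaining estimate
\[
|\bar u_\varepsilon(x) - \bar u_{r_0}(x)| \leq C\log(r_0/\varepsilon)\,\|u\|_{\BMO},
\]
together with $\sup_x |\bar u_{r_0}(x)| < \infty$ (bounded by $\|u\|_{L^1}/\sigma(B_{r_0})$), yields
\[
\sup_{x\in X} |u_\varepsilon(x) - \bar u_\varepsilon(x)| \leq L\varepsilon \bigl(M + C(\log(r_0/\varepsilon) + 1)\|u\|_{\BMO}\bigr) \xrightarrow{\varepsilon\to 0} 0,
\]
which is the first equality in the lemma.

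For the second equality, essentially the same scheme applies on the compact manifold-without-boundary $\partial N$: using that $T_yN = T_yX$ for $y\in\partial N$ and hence $g(y) \in T_y X$ $\H^{n-1}$-a.e., one obtains
\[
|g_\varepsilon(x) - \bar g_\varepsilon(x)| \leq L\varepsilon \fint_{B_\varepsilon^{\partial N}(x)} |g(y)|\, \d\H^{n-1}(y).
\]
Here the argument is much cleaner, because the pointwise bound $|g|\leq c_2$ makes the logarithmic step unnecessary and one concludes $|g_\varepsilon(x) - \bar g_\varepsilon(x)| \leq L c_2\, \varepsilon$ uniformly on $\partial N$. The main technical obstacle lies not in the analysis itself but in securing $u(y) \in T_y X$ a.e.\ on $X \setminus N$: this is handled either by the substitution $G \leadsto \Pi G$ above, or alternatively by observing directly that the normal component of $G$ is of order $\dist(\cdot, \partial N)$ on the tubular neighbourhood $U$, which contributes at worst an $O(\varepsilon)$ error to the estimate.
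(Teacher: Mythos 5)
Your argument has the same skeleton as the paper's: you isolate the normal component of $\bar u_\varepsilon(x)$ at $x$, rewrite it using the a.e.\ tangency of $u$, and exploit the Lipschitz continuity of the tangent structure to extract a factor of order $\varepsilon$. (You phrase this with the projection-valued map $\Pi$, the paper with a local orthonormal frame $\nu^\alpha_1,\dots,\nu^\alpha_{d-n}$ for the normal bundle; these are equivalent bookkeeping.) The one genuine divergence is the final estimate on $\fint_{B_\varepsilon}|u|$: you use the telescoping bound $|\bar u_\varepsilon(x)-\bar u_{r_0}(x)|\lesssim\log(r_0/\varepsilon)\|u\|_{\BMO}$, giving an $\varepsilon\log(1/\varepsilon)$ rate, whereas the paper keeps the weight $|x-y|$ inside the integral, applies H\"older with the embedding $\BMO\hookrightarrow L^{p'}$ (\cite[Lemmas~A.1, B.3]{BN1}), and chooses $p$ close to $1$ to obtain a power rate $\varepsilon^{1+n/p-n}$; both are standard consequences of John--Nirenberg and both suffice. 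You also correctly flag a point the paper passes over silently: on $X\setminus N$ one has $u=G$, and $G(y)=g(\pi(y))\chi(y)$ lies in $T_{\pi(y)}X$, not $T_yX$, so the identity $\Pi(y)u(y)=u(y)$ (the paper's ``$u(y)\cdot\nu_i^\alpha(y)=0$ a.e.'') is not literally valid there. Either of your remedies works --- replacing $G$ by $\Pi G$ (note $\Pi G\mathbf 1_{X\setminus N}+v\mathbf 1_N=\Pi u\in\VMO(X)$, so the trace relation is preserved) or observing that the omitted normal part of $G$ is $O(\dist(\cdot,\partial N))=O(\varepsilon)$ on the relevant balls --- and the second part of the lemma, concerning $g$ on the boundaryless manifold $\partial N$, avoids the issue entirely as you note.
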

\proof
We present the proof for $u_\varepsilon$ only, as the same argument applies to $g_\varepsilon$ as well.
Consider a \emph{finite} atlas $\mathscr A = \left\{U_\alpha\right\}_{\alpha\in A}$ for $X$ and, for each $\alpha\in A$, 
let $\nu_1^\alpha, \, \ldots, \, \nu^\alpha_{d - n}$ be a smooth moving frame for the normal bundle of $X$, defined on $U_\alpha$ 
(i.e., $\left(\nu^\alpha_i(y)\right)_{1 \leq i \leq d - n}$ is an orthonormal base for $T_y X^\perp$, for all $y\in U_\alpha$).
Set
\begin{equation} \label{curvatura}
 C_N := \max_{\substack{\alpha\in A \\ 1 \leq i \leq d - n}} \norm{D \nu^\alpha_i}_{L^\infty(U_\alpha)} < +\infty .
\end{equation}
For all $\alpha\in A$ and $x\in U_\alpha$, we write
\begin{equation} \label{overline1}
 u_\varepsilon(x) - \bar u_\varepsilon(x) = \sum_{i = 1}^{d - n} \left(\bar u_\varepsilon(x)  \cdot \nu^\alpha_i(x) \right) \nu^\alpha_i(x) 
\end{equation}
and, since $u(y)\cdot \nu_i^\alpha(y) = 0$ for a.e. $y\in U_\alpha$, we have
\[
 \bar u_\varepsilon(x) \cdot \nu^\alpha_i(x) = \fint_{B_\varepsilon(x)} u(y) \cdot \left(\nu^\alpha_i(x) - \nu^\alpha_i(y) \right) \, \d \sigma(y) .
\]
Taking into account~\eqref{curvatura}, we infer
\[
 \abs{\bar u_\varepsilon(x) \cdot \nu^\alpha_i(x)} \leq C_N \fint_{B_\varepsilon(x)} \abs{u(y)} \abs{x - y} \d\sigma(y) .
\]
To bound the right-side of this inequality, we exploit the injection $\BMO(X) \hookrightarrow L^p(X)$, which holds true for all $1 \leq p < +\infty$, and the H\"older inequality. For a fixed $p$, we obtain
\begin{equation} \label{overline2}
 	\abs{\bar u_\varepsilon(x) \cdot \nu_i^\alpha(x)} 
		\leq C_N \sigma(B_\varepsilon(x))^{-1} \norm{x - y}_{L^p(B_\varepsilon(x))} \norm{u}_{L^{p^\prime}(X)} 
		\leq C_{N, n, p} \, \varepsilon^{1 + n/p - n} \norm{u}_{L^{p^\prime}(X)} ,
\end{equation}
for some constant $C_{N, n, p}$ depending only on $C_N$, $n$ and $p$.
Whenever $p^\prime< +\infty$, the $L^{p^\prime}$ norm of~$u$ can be bounded using only the BMO norm of~$u$ and $\fint_X u$ (with the help of \cite[Lemmas~A.1 and B.3]{BN1}).
Thus, choosing $p = p(n) > 1$ so small that $1 + n/p - n > 0$, from~\eqref{overline1} and~\eqref{overline2} we conclude the proof.
\endproof
\begin{proof}[Proof of Lemma~\ref{lemma: g_epsilon positive}]
Setting
\[
 S_x := \left\{ v\in T_xN \colon c_1 \leq \abs{v} \leq c_2 \right\}
\]
we have, for all $x\in \partial N$,
\begin{equation} \label{dist g}
 \dist(\bar g_\varepsilon(x), \, S_x) \leq \fint_{B_\varepsilon(x)} \abs{\bar g_\varepsilon(x) - g(y)} \, \d\sigma(y) + \fint_{B_\varepsilon(x)} \dist(g(y), \, S_x) \, \d\sigma(y) .
\end{equation}
The first term in the right-hand side tends to zero as $\varepsilon\to 0$, uniformly in $x$, due to Lemma~\ref{lemma: VMO}. On the other hand, it holds
\begin{equation} \label{geom conv}
 \sup_{\substack{x, \, y\in \partial N \\ \dist(x, \, y) \leq \varepsilon}} \: \sup_{v\in S_y} \dist(v, \, S_x) \longrightarrow 0 \qquad \textrm{as } \varepsilon \to 0,
\end{equation}
since $N$ is compact and smooth up to the boundary. Formula~\eqref{geom conv} can be easily proved, e.g., by contradiction: Assume that~\eqref{geom conv} does not hold. 
Then, we find a number $\eta > 0$, a sequence $(\varepsilon_k)_{k\in\N}$ of positive numbers s.t. $\varepsilon_k\searrow 0$, two sequences $(x_k)_{k\in\N}$, $(y_k)_{k\in\N}$ in $N$ 
and one $(v_k)_{k\in\N}$ in $\R^d$, which satisfy
\[
 v_k\in S_{y_k}, \qquad \dist(x_k, \, y_k) \leq \varepsilon_k , \qquad \dist(v_k, \, S_{x_k}) \geq \eta.
\]
By compactness of $N$, up to subsequences we can assume that
\[
 x_k \to x\in N, \qquad y_k \to y\in N, \, \qquad v_k \to v\in \R^d ,
\]
where $c_1 \leq \abs{v} \leq c_2$. Let $\nu_i, \, \nu_2, \, \ldots , \, \nu_{d - n}$ be a moving frame for the normal bundle of $N$, defined on a neighbourhood of $y$.
Passing to the limit in the condition
\[
 v_k \cdot \nu_i(y_k) = 0 \qquad \textrm{for all } i
\]
we find that $v\in T_yN$, hence $v\in S_y$. But $y = x$, because $\dist(x_k, \, y_k) \leq \varepsilon_k \to 0$. Thus, we have found $v\in S_x$ so that $\dist(v, \, S_{x_k})\geq \eta/2 > 0$.
On the other hand, if $\varphi\colon U\subset N \to \R^n$ is a coordinate chart near $x$ then
\[
  w_k := \d \varphi_{\varphi(x_k)}^{-1} \left(\d \varphi_x v\right), 
  \qquad \tilde w_k := \min\{\max\{\abs{w_k}, \, c_1\}, \, c_2\} \frac{w_k}{\abs{w_k}}
\]
are well-defined for $k\gg 1$ and $\tilde w_k\in S_{x_k}$, $\tilde w_k\to v$. This leads to a contradiction.

 Thus, we can take advantage of~\eqref{hp g} and~\eqref{geom conv} to estimate the second term in the right-hand side of~\eqref{dist g}.
 We deduce that
 \[
  \sup_{x\in \partial N} \dist(\bar g_\varepsilon(x), \, S_x) \longrightarrow 0 \qquad \textrm{as } \varepsilon \to 0
 \]
 and, invoking Lemma~\ref{lemma: overline}, we conclude the proof.
\end{proof}

\begin{proof}[Proof of Lemma~\ref{lemma: u_eps bd}]
In view of Lemma~\ref{lemma: overline}, proving that
\[
 \lim_{\varepsilon\to 0} \sup_{x\in \partial N} \abs{\bar u_\varepsilon(x) - \bar g_\varepsilon(x)} = 0
\]
is enough to conclude. In addition, it holds
\begin{equation} \label{u_eps bd 1}
 \abs{\bar u_\varepsilon(x) - \bar g_\varepsilon(x)} \leq \abs{\bar u_\varepsilon(x) - \bar G_\varepsilon(x)}  + \abs{\bar G_\varepsilon(x) - \bar g_\varepsilon(x)} ,
\end{equation}
so we can study each term in the right-hand side and prove that they converge to zero as $\varepsilon\to 0$.

Let us focus on the first term. We remark that $\bar u_\varepsilon - \bar G_\varepsilon = \overline{(u - G)}_\varepsilon$ and that
\[u - G = \begin{cases}
           v - G & \textrm{on } N \\
           0     & \textrm{on } X \setminus N .
          \end{cases}
\]
Thus, for all $x\in \partial N$ we have
(recall that $(u-G)(y) = 0$ for almost any $y\in X\setminus N$.)
\begin{align*}
 \frac{\sigma\left(B^X_\varepsilon(x) \setminus N\right)}{\sigma\left(B^X_\varepsilon(x)\right)}  \abs{\overline{(u - G)}_\varepsilon (x)} & 
 \le \frac{1}{\sigma\left(B_\varepsilon^X(x)\right)} \int_{B^X_\varepsilon(x) \setminus N} \abs{(u - G)(y) - \overline{(u - G)}_\varepsilon(x)} \, \d\sigma(y) \\
 &\leq \fint_{B^X_\varepsilon(x)} \abs{(u - G)(y) - \overline{(u - G)}_\varepsilon(x)} \, \d\sigma(y) ,
\end{align*}
where $\sigma$ is the Riemannian measure on $X$.
 Now, assume for a while that 
 there exist two numbers~$\alpha, \, \varepsilon_0 > 0$ such that
\begin{equation} \label{densita bordo}
 \frac{\sigma\left(B^X_\varepsilon(x) \setminus N\right)}{\sigma\left(B^X_\varepsilon(x)\right)} \geq \alpha
\end{equation}
for all $x\in\partial N$ and all $\varepsilon\in (0, \, \varepsilon_0)$.
 Therefore, when $\varepsilon < \varepsilon_0$ we deduce
\[
 \sup_{x\in\partial N} \abs{\overline{(u - G)}_\varepsilon (x)} \leq \alpha^{-1} \sup_{x\in\partial N} \fint_{B^X_\varepsilon(x)} \abs{(u - G)(y) - \overline{(u - G)}_\varepsilon(x)} \, \d\sigma(y)
\]
and, since $u - G\in \VMO(X)$, the right-hand side tends to $0$ as $\varepsilon\to 0$, by Lemma~\ref{lemma: VMO}. To conclude, we have to prove the validity of~\eqref{densita bordo}. To this end,
we assume without loss of generality that $N$ is a smooth, bounded domain in $X = \R^n$.
For a fixed $x_0\in \partial N$, we can locally write $\partial N$ as the graph of a smooth function $\varphi\colon B_{r_0}(0) \subseteq \R^{n - 1}\to \R$.
Then, letting $L_{x_0}(x) := \varphi(x_0) + \d \varphi(x_0) (x - x_0)$ be the linear approximation of $\varphi$, considering the region between the graphs of $\varphi$ and $L_{x_0}$ we deduce
\[
 \abs{\H^n\left(N \cap B^n_\varepsilon(x_0)\right) - \frac12 \H^n\left(B^n_\varepsilon(x_0)\right)} \leq \int_{B_\varepsilon^{n - 1}(x_0)} \abs{\varphi(x) - L_{x_0}(x)} \, \d x .
\]
By the Taylor-Lagrange formula, we have $\abs{\varphi(x) - L_{x_0}(x)} \leq M\abs{x - x_0}^2$, for a suitable constant $M$ controlling the hessian of $\varphi$. Thus
\[
 \abs{\H^n\left(N \cap B^n_\varepsilon(x_0)\right) - \frac12 \H^n\left(B^n_\varepsilon(x_0)\right)} \leq M\omega_n \varepsilon^{n + 2} ,
\]
where $\omega_n := \H^{n - 1}(\S^{n - 1}) = n\H^n(B^n_1(0))$, and
\begin{equation} \label{taylor palle}
 \abs{\frac{\H^n\left(N \cap B^n_\varepsilon(x_0)\right)}{\H^n\left(B^n_\varepsilon(x_0)\right)} - \frac12 } \leq n M \varepsilon^2 .
\end{equation}
The constant $M$ depends on $\varphi$, which is defined just locally, in a neighbourhood of $x_0$.
Nevertheless, owning to the compactness of $\partial N$, one needs to consider a \emph{finite} number of functions $\varphi$ only, and hence it is possible to choose a constant $M$ 
which satisfies~\eqref{taylor palle} for all $x_0\in N$. Therefore,~\eqref{densita bordo} follows. 
\medskip

Now, we have to deal with the second term in~\eqref{u_eps bd 1}.
We can assume without loss of generality that $X = \R^n$ and
\[
 N = \R^n_+ := \left\{(x_1, \, x_2, \, \ldots, \, x_n)\in \R^n \colon x_1 \geq 0 \right\} .
\]
We can always reduce to this case by composing with local coordinates, with the help of a partition of the unity argument.
For the sake of simplicity, denote the variable in $\R^n$ by $x = (t, \, y)$, where $t\in\R$ and $y\in\R^{n - 1}$.

Call $\alpha_n$ the volume of the unit ball of $\R^n$. 
Using Fubini's theorem and the definition~\eqref{G} of~$G$, for $x_0 = (0, \, y_0)$ and $\varepsilon$ small enough (so that $\chi(t,y)\equiv 1$, for $|t|\leq\varepsilon$) we compute 
\begin{align*}
 	\bar G_\varepsilon(x_0) 
		&= \frac{1}{\alpha_n\varepsilon^n} \int_{-\varepsilon}^{\varepsilon}  
	 		\left(\int_{B^{n - 1}(y_0, \, \sqrt{\varepsilon^2 - t^2})}  G(t, \, y)\, \d y\right) \d t\\ 
 		&= \frac{\alpha_{n - 1}}{\alpha_n\varepsilon^n} \int_{-\varepsilon}^{\varepsilon}
			\left(\varepsilon^2 - t^2\right)^{\frac{n - 1}{2}}\left(\fint_{B^{n - 1}(y_0, \, \sqrt{\varepsilon^2 - t^2})} g(y) \, \d y\right) \d t\\ \\
		&= \frac{\alpha_{n - 1}}{\alpha_n\varepsilon^n} \int_{-\varepsilon}^\varepsilon 
			\left(\varepsilon^2 - t^2\right)^{\frac{n - 1}{2}} \bar g_{\sqrt{\varepsilon^2 - t^2}} (y_0) \, \d t \\
		&=  \frac{\alpha_{n - 1}}{\alpha_n\varepsilon^n} \int_{-1}^1 
			\left(\varepsilon^2 - (\varepsilon s)^2\right)^{\frac{n - 1}{2}} \bar g_{\sqrt{\varepsilon^2 - (\varepsilon s)^2}} (y_0) \varepsilon\, \d s \\
 		&= \frac{\alpha_{n - 1}}{\alpha_n} \int_{-1}^1 
			\left(1 - s^2\right)^{\frac{n - 1}{2}} \bar g_{\varepsilon\sqrt{1 -s^2}} (y_0) \, \d s .
\end{align*}
On the other hand, Fubini's theorem also implies that
\[
 \alpha_n = \alpha_{n - 1} \int_{-1}^1 \left(1 - t^2\right)^{\frac{n - 1}{2}} \, \d t ,
\]
thus
\begin{equation} \label{u_eps bd 2}
 \abs{\bar G_\varepsilon(x_0) - \bar g_\varepsilon(x_0)} \leq 
 \frac{\alpha_{n - 1}}{\alpha_n} \int_{-1}^1 \left(1 - t^2\right)^{\frac{n - 1}{2}} \abs{\bar g_{\varepsilon\sqrt{1 - t^2}} (y_0) - \bar g_\varepsilon(y_0)} \, \d t .
\end{equation}
For all $-1 < t < 1$, since $B^{n - 1}(y_0, \, \varepsilon\sqrt{1 - t^2}) \subset B^{n - 1}(y_0, \, \sqrt{1 - t^2})$ we infer that
\begin{align*}
 \abs{\bar g_{\varepsilon\sqrt{1 - t^2}} (y_0) - \bar g_\varepsilon(y_0)} &\leq \fint_{B^{n - 1}(y_0, \, \varepsilon\sqrt{1 - t^2})} \abs{g(y) - \bar g_\varepsilon(y_0)} \, \d y \\
 &\leq \left(1 - t^2\right)^{\frac{1 - n}{2}} \fint_{B^{n - 1}(y_0, \, \varepsilon)} \abs{g(y) - \bar g_\varepsilon(y_0)} \, \d y 
\end{align*}
and, injecting this information into~\eqref{u_eps bd 2}, we deduce
\[
 \abs{\bar G_\varepsilon(x_0) - \bar g_\varepsilon(x_0)} \leq \frac{2\alpha_{n - 1}}{\alpha_n} \fint_{B^{n - 1}(y_0, \, \varepsilon)} \abs{g(y) - \bar g_\varepsilon(y_0)} \, \d y .
\]
Hence, applying once again Lemma~\ref{lemma: VMO}, we conclude that the second term in the right-hand side of~\eqref{u_eps bd 1} converges to zero as $\varepsilon\to 0$, uniformly in $x\in\partial N$.
\end{proof}

\begin{remark}
 Setting
\[
 \ind_-(g, \, \partial N) := \ind_- (u_\varepsilon, \, \partial N)
\]
gives another possibility to define the inward boundary index of $g$, just as natural as our Definition~\ref{def: VMO index}.
However, thanks to Lemma~\ref{lemma: u_eps bd} and to the stability of the inward boundary index 
(Proposition~\ref{prop: stability}), we deduce that the two definitions agree.
\end{remark}

\begin{lemma} \label{lemma: constant}
 There exists $\varepsilon_0 \in (0, \, r_0)$ so that the functions
 \[
  \varepsilon \mapsto \ind(u_\varepsilon, \, N), \qquad \varepsilon \mapsto \ind_-(g_\varepsilon, \, \partial N) 
 \]
 are constant on $(0, \, \varepsilon_0)$.
\end{lemma}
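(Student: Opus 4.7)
The plan is to show that both maps are locally constant on $(0, \varepsilon_0)$, so that, being integer-valued on the connected interval $(0, \varepsilon_0)$, they are globally constant. By Proposition~\ref{prop: stability}, local constancy will be granted once I establish continuity of the two maps
\[
 \varepsilon \mapsto u_\varepsilon|_{\partial N}, \qquad \varepsilon \mapsto g_\varepsilon
\]
as maps from $(0, \varepsilon_0)$ into $\mathscr{C}(\partial N, \R^d)$. Combining Lemmas~\ref{lemma: g_epsilon positive} and~\ref{lemma: u_eps bd}, I may choose $\varepsilon_0$ small enough that $|u_\varepsilon|$ and $|g_\varepsilon|$ are bounded below on $\partial N$ by a common positive constant, uniformly in $\varepsilon\in (0, \varepsilon_0)$, so that both indices are defined throughout the interval and Proposition~\ref{prop: stability} applies.

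The main technical step is then to prove uniform-in-$x$ continuity of $\varepsilon \mapsto \bar u_\varepsilon(x)$. The tangential projection $P_X$ is smooth in $x$, so this transfers to $u_\varepsilon = P_X \bar u_\varepsilon$, and the identical argument yields continuity of $\bar g_\varepsilon$ and $g_\varepsilon$ on the $(n-1)$-manifold $\partial N$. For $\varepsilon < \varepsilon'$ both small, with $a := \sigma(B_\varepsilon^X(x))$ and $b := \sigma(B_{\varepsilon'}^X(x))$, one writes
\[
 \bar u_\varepsilon(x) - \bar u_{\varepsilon'}(x) = \frac{b - a}{a\,b} \int_{B_\varepsilon^X(x)} u \, \d \sigma - \frac{1}{b} \int_{B_{\varepsilon'}^X(x) \setminus B_\varepsilon^X(x)} u \, \d \sigma
\]
and applies H\"older's inequality together with the embedding $\BMO(X) \hookrightarrow L^p(X)$ for some fixed finite $p > 1$ (see \cite[Lemma~A.1]{BN1}). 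Each summand is then bounded by $C\,\|u\|_{L^p(X)}\,\omega(\varepsilon, \varepsilon')$ with $\omega(\varepsilon, \varepsilon') \to 0$ as $\varepsilon' \to \varepsilon$; uniformity in $x$ comes from the two-sided estimate $\sigma(B_\varepsilon^X(x)) \asymp \varepsilon^n$, valid by compactness and smoothness of $X$.

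Granted this continuity, fix any $\varepsilon \in (0, \varepsilon_0)$ and invoke Proposition~\ref{prop: stability} for $u_\varepsilon$ (and, for the inward boundary index, either for a continuous extension of $g_\varepsilon$ to $N$ or by the boundary-only dependence of $\ind_-$ recorded in Subsection~\ref{subsect: cont index}). This supplies $\varepsilon_1 > 0$ such that, for all $\varepsilon'$ in a sufficiently small neighbourhood of $\varepsilon$, the $\mathscr{C}(\partial N)$-distances $\|u_{\varepsilon'} - u_\varepsilon\|$ and $\|g_{\varepsilon'} - g_\varepsilon\|$ both fall below $\varepsilon_1$, forcing the two indices at $\varepsilon'$ to coincide with those at $\varepsilon$. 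Both maps are therefore locally constant on the connected interval $(0, \varepsilon_0)$, and hence constant. The chief obstacle is the uniform modulus of continuity of the averaging operator, but it succumbs to standard $\BMO$ tools.
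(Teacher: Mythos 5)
Your proposal is correct and follows essentially the same route as the paper's proof: you establish well-definedness of the indices for small $\varepsilon$ from Lemmas~\ref{lemma: g_epsilon positive} and~\ref{lemma: u_eps bd}, prove continuity of $\varepsilon \mapsto u_\varepsilon$ and $\varepsilon \mapsto g_\varepsilon$ in the $\mathscr C(\partial N)$ norm, and conclude by combining Proposition~\ref{prop: stability} with the connectedness of the interval. The only variation is in the continuity step, where the paper invokes the dominated convergence theorem to get joint continuity of $(x,\varepsilon) \mapsto \bar u_\varepsilon(x)$, whereas you carry out an explicit decomposition and estimate via H\"older and the embedding $\BMO(X)\hookrightarrow L^p(X)$ --- a more hands-on but equally valid argument.
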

\proof
We have already remarked that $\ind(u_\varepsilon, \, N)$ and $\ind_-(g_\varepsilon, \, \partial N)$ are well-defined for $\varepsilon$ small, as a consequence of Lemmas~\ref{lemma: g_epsilon positive} 
and~\ref{lemma: u_eps bd}.
Consider the functions $H \colon N \times (0, \varepsilon_0) \to \R^d$ and $G \colon \partial N \times (0, \, \varepsilon_0) \to \R^d$ given by
\[
 H(x, \, \varepsilon) := u_\varepsilon(x) = \proj_{T_x N} \bar u_\varepsilon(x)
\]
and
\[
  G (x, \, \varepsilon) := g_\varepsilon(x) = \proj_{T_x N} \bar g_\varepsilon(x) .
\]
These maps are well-defined and continuous.
Indeed, it follows from the dominated convergence theorem that $(x, \, \varepsilon)\mapsto \bar u_\varepsilon(x)$ and $(x, \, \varepsilon)\mapsto \bar g_\varepsilon(x)$ are continuous, 
whereas the family of projections $\mathrm{proj}_{T_x N}$ depends continuously on $x$.
Applying 
Proposition~\ref{prop: stability} to~$H$ and~$G$, we conclude that $\ind(u_\varepsilon, \, N)$ and $\ind_-(g_\varepsilon, \, \partial N)$ are constant with respect to $\varepsilon$. 
\endproof

After these preliminary lemmas, the proof of our main result, Theorem~\ref{th: VMO morse}, is straightforward.

\begin{proof}[Proof of Theorem~\ref{th: VMO morse}]

Let $v$ be given and let $u_\varepsilon$ and $g_\varepsilon$ be the continuous approximants of $v$ and of its trace, as defined in~\eqref{def:uege}. 
Let $\varepsilon_0>0$ be the constant given by Lemma~\ref{lemma: constant}. Up to choosing a smaller value of $\varepsilon_0$, owing to Lemma~\ref{lemma: g_epsilon positive} and Lemma~\ref{lemma: u_eps bd}, 
for all $\varepsilon \in (0,\varepsilon_0)$ and all  $x\in \partial N$ we have
\begin{align*}
 	\abs{u_\varepsilon(x)} \geq \frac{c_1}{2}, \quad \abs{g_\varepsilon(x)} \geq \frac{c_1}{2},\\
	|u_\varepsilon(x)-g_\varepsilon(x)|< \frac{\sqrt 5 -1}{8}c_1.
\end{align*} 
Therefore, by Proposition~\ref{prop: stability}, for all $\varepsilon \in (0,\varepsilon_0)$ there holds
\begin{equation}
\label{eq:quasi}
	\ind_-(u_\varepsilon, \, \partial N) = \ind_-(g_\varepsilon, \, \partial N).
\end{equation}
To conclude, by Definition~\ref{def: VMO index} and Proposition~\ref{prop: continuous index formula} we obtain
\begin{align*}
  \ind(v, \, N) + \ind_-(v, \, \partial N) &= \ind(u_\varepsilon, \, N) + \ind_-(g_\varepsilon, \, \partial N) \\
		&\stackrel{\eqref{eq:quasi}}{=} \ind(u_\varepsilon, \, N) + \ind_-(u_\varepsilon, \, \partial N) = \chi(N),
\end{align*}
which proves Theorem~\ref{th: VMO morse}.
\end{proof}

\subsection{Proof of Proposition~\ref{prop: CNS}}

This subsection aims at proving~Proposition~\ref{prop: CNS}.
In particular, given a boundary datum~$g\in\VMO(\partial N)$ which satisfies~\eqref{hp g} and the topological condition~\eqref{eq:sufficient}, we extend it to a non vanishing VMO field defined on~$N$.

In the following lemma, we work out the construction near the boundary.
For any~$r > 0$ small enough, the set
\[
 U_r := \left\{ x\in N \colon \dist(x, \, \partial N) < r \right\}
\]
is a tubular neighbourhood of~$\partial N$.
In particular, there exists an orientation-preserving diffeomorphism~$\varphi\colon\partial N \times [0, \, r]\to \overline U_r$ 
such that~$\dist(\varphi(y, \, s), \, \partial N) = s$ for any~$(y, \, s)\in \partial N\times [0, \, r]$.
Then,~\mbox{$C_r := \varphi(\partial N\times\{r\})$} is a submanifold of~$N$, diffeomorphic to~$\partial N$.
We define the function~$\bar v\colon \overline U_r \to \R^d$ by
\begin{equation} \label{interpolating v}
 (\bar v\circ\varphi)(y, \, s) := \bar{g}_s(y) = \fint_{B^{\partial N}_s(y)} g(z) \, \d \H^{n - 1} (z)
\end{equation}
for any~$(y, \, s)\in \partial N\times [0, \, r]$, and set~$v := P_X \bar v$.  Thus $v$ is a tangent vector field. Moreover, it 
satisfies

\begin{lemma} \label{lemma: interpolation}
 There exists~$r>0$ such that the following properties hold.
 \begin{enumerate}[label = (\roman*)]
  \item \label{item: tubular} The set~$U_r$ is a tubular neighbourhood of~$\partial N$.
  \item \label{item: VMO} We have~$v\in \VMO(U_r)$, and~$v$ has trace~$g$ on~$\partial N$ (in the sense of Brezis and Nirenberg, as defined in Section~\ref{sect: VMO}).
  \item \label{item: inw} The function~$v$ is continuous on~$\overline U_r\setminus \partial N$, $v(x)\neq 0$ for every~$x\in \overline U_r$ and
  \[
   \ind_-(g, \, \partial N) = \ind_-(v, \, C_r) .
  \]
 \end{enumerate}
\end{lemma}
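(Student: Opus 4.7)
My proof proceeds in four steps, with $r\in(0,r_0)$ shrunk as needed along the way.

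First, I fix $r$ small enough that $\varphi$ is a diffeomorphism onto $\overline{U_r}$, giving~\ref{item: tubular}. Applying Lemma~\ref{lemma: g_epsilon positive} with $\delta = c_1/2$ and shrinking $r$ further if necessary, I obtain the uniform lower bound $|\bar g_s(y)|\geq c_1/2$ for every $(y,s)\in\partial N\times(0,r]$. Since for $s>0$ the averaging $(y,s)\mapsto \bar g_s(y)$ is continuous by dominated convergence (small perturbations of $(y,s)$ produce small symmetric differences of geodesic balls on $\partial N$ with bounded integrand $g$), the field $\bar v$ is continuous on $\overline{U_r}\setminus\partial N$ and so is $v = P_X\bar v$, since the projection onto $T_xN$ depends smoothly on $x$.

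For part~\ref{item: VMO}, it suffices to show that $\bar v\in\VMO(U_r)$ with trace $g$, since the smoothness of the projection onto $TN$ preserves both the $\VMO$ class and the trace. To estimate the oscillation on a small geodesic ball $B_\varepsilon(x_0)\subset U_r$, I would pull back through $\varphi$: the ball is contained in a cylinder $\{(y,s) : \dist_{\partial N}(y,y_0)\lesssim\varepsilon, \, |s-s_0|\lesssim\varepsilon\}$, and the oscillation of $(y,s)\mapsto\bar g_s(y)$ on such a cylinder splits, via the triangle inequality, into a $y$-oscillation of $\bar g_s$ (controlled at scale $\varepsilon$ by the $\VMO$ modulus of $g$ through Lemma~\ref{lemma: VMO}) and an $s$-oscillation of $\bar g_{\cdot}(y_0)$ (controlled by a Fubini-type argument in the spirit of Lemma~\ref{lemma: u_eps bd}, reducing comparisons between different averaging scales to the $\VMO$ modulus of $g$). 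Both tend to $0$ as $\varepsilon\to 0$. For the trace, I glue $\bar v$ (in $U_r$, extended however we like inside $N$) with the Brezis--Nirenberg reference extension $G$ from~\eqref{G} across $\partial N$, and the same kind of estimate applied to balls straddling $\partial N$ shows the glued function lies in $\VMO(X)$.

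Third, for the non-vanishing in~\ref{item: inw}, since $\bar v$ is $\R^d$-valued but need not be tangent to $N$, I control $|v-\bar v|$ exactly as in Lemma~\ref{lemma: overline}: writing $v-\bar v$ in the local normal frame $\nu^\alpha_i$ of $N$ and using that $\bar g_s(y)$ is itself an average of tangent vectors whose tangent spaces vary by $O(\varepsilon)$ in angle on the scale $\varepsilon\sim r$, one finds $|v-\bar v|\leq C_N r$. Shrinking $r$ once more makes this smaller than $c_1/4$, so $|v|\geq c_1/4$ uniformly on $\overline{U_r}\setminus\partial N$.

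Finally, for the index equality, let $\psi := \varphi(\cdot,r)\colon \partial N\to C_r$. The pullback $\psi^\ast v$ is a continuous tangent field on $\partial N$ that differs from the comparison field $g_r = P_{T_\cdot N}\bar g_r$ of~\eqref{def:uege} only through the replacement of $T_{\psi(y)}N$ by $T_yN$ in the projection, and this discrepancy vanishes in the uniform norm as $r\to 0$ by continuity of the tangent-space distribution. Since $\psi$ is orientation-preserving and carries the outward normal of $\partial N$ in $T_yN$ to the outward normal of $C_r$ in $T_{\psi(y)}(N\setminus U_r)$ up to a similarly small error, Proposition~\ref{prop: stability} gives $\ind_-(v,C_r) = \ind_-(g_r,\partial N)$, which equals $\ind_-(g,\partial N)$ by Definition~\ref{def: VMO index} together with Lemma~\ref{lemma: constant}, provided $r$ was initially chosen below the $\varepsilon_0$ of that lemma. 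The main obstacle throughout is the second step: proving the $\VMO$ regularity and trace of the averaging extension requires simultaneously controlling oscillations on $\partial N$ (via the $\VMO$ modulus of $g$) and across scales in the tubular direction, in a manner uniform in the base point.
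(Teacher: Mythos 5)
Your plan follows the same broad structure as the paper's proof — continuity from dominated convergence, non-vanishing from the uniform bound on $|v-\bar v|$ via the Lemma~\ref{lemma: overline} argument, and the index equality via comparison of $(v\circ\varphi)(\cdot, r)$ with $g_r$ followed by Proposition~\ref{prop: stability} and invariance under diffeomorphism — so parts~\ref{item: tubular},~\ref{item: inw} and the step 4 index computation are essentially aligned with the paper.

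The genuine difference, and also the soft spot, is part~\ref{item: VMO}. You correctly identify it as the main obstacle but then propose to prove the $\VMO$ regularity and the Brezis--Nirenberg trace property by hand, splitting oscillations into a $y$-direction piece and an $s$-direction piece. This amounts to reproving a known fact: the paper instead cites \cite[Lemma~7]{BN1}, which precisely states that the Gagliardo-type averaging extension $\bar v$, defined by~\eqref{interpolating v}, lies in $\VMO(U_r)$ with trace $g$ at $\partial N$, so that the glued map $\bar u$ belongs to $\VMO(W_r)$. Once that is in hand, the paper passes from $\bar u$ to $u$ by an explicit estimate on the quantity $I_\varepsilon(u,\cdot)$: on the collar $U_s$ one uses $\|\bar u - u\|_{L^\infty(U_s)}\to 0$ (the Lemma~\ref{lemma: overline} bound, driven by the fact that the averaging radius is $s=\dist(x,\partial N)$), and on the interior piece $U_{r-2\varepsilon}\setminus U_{s-\varepsilon}$ one uses uniform continuity of $u$; then one sends $\varepsilon\to 0$ first and $s\to 0$ second. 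Your shortcut — that ``the smoothness of the projection onto $TN$ preserves both the $\VMO$ class and the trace'' — is correct for the $\VMO$ class (pointwise multiplication by the smooth bounded matrix $x\mapsto\proj_{T_xN}$ preserves $\VMO$), but the trace statement is not immediate from that alone; the paper's $I_\varepsilon$ bookkeeping is precisely what justifies it. If you prefer your cleaner reduction, you should at least record the chain: $\bar u\in\VMO(W_r)$ by \cite[Lemma~7]{BN1}; $u - \bar u$ extends continuously by $0$ across $\partial N$ because $|v-\bar v|\to 0$ as $\dist(\cdot,\partial N)\to 0$; hence $u=\bar u+(u-\bar u)\in\VMO(W_r)$. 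That gives a genuinely shorter argument than the paper's, but you still need the BN1 black box rather than the handmade oscillation split you sketched.
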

\begin{proof}
By Lemmas~\ref{lemma: g_epsilon positive} and~\ref{lemma: constant}, we can pick~$r$ such that~\ref{item: tubular} holds and, in addition,
\begin{equation} \label{interpolation1}
 \frac{c_1}{2} \leq |g_s| \leq  2c_2 , \qquad \ind_-(g_s, \, \partial N) = \ind_-(g, \, \partial N)
\end{equation}
for any~$0 < s \leq r$.
The field~$\bar v$ is continuous on~$\overline U_r\setminus\partial N$, due to the dominated convergence theorem, so~$v$ is continuous on~$\overline U_r\setminus\partial N$.
Taking a smaller~$r$ if necessary, from~\eqref{interpolation1} and Lemma~\ref{lemma: overline} we deduce that
\begin{equation} \label{interpolation2}
 \frac{c_1}{3} \leq \abs{v} \leq 3c_2 \qquad \textrm{in } \overline U_r .
\end{equation}
Moreover, there holds
\[
 \begin{split}
  \norm{(v\circ\varphi)(\cdot, \, r) - g_r}_{\mathscr C(\partial N)} &\leq 
  \norm{\proj_{T_{\varphi(\cdot, r)}X} - \proj_{T_{\varphi(\cdot, 0)}X} }_{\mathscr C(\partial N)} \norm{\bar g_r }_{\mathscr C(\partial N)} \\
  &\stackrel{\eqref{hp g}}{\leq} C \norm{\proj_{T_{\varphi(\cdot, r)}X} - \proj_{T_{\varphi(\cdot, 0)}X} }_{\mathscr C(\partial N)} .
 \end{split}
\]
Since~$X$ is a smooth, compact manifold, $\proj_{T_{\varphi(\cdot, r)}X}$ converges uniformly to~$\proj_{T_{\varphi(\cdot, 0)}X}$ as~$r\to 0$, so
\[
 \norm{(v\circ\varphi)(\cdot, \, r) - g_r}_{\mathscr C(\partial N)} \to 0 \qquad \textrm{as } r\to 0 .
\]
By the stability of the boundary index (Proposition~\ref{prop: stability}), we obtain that
\[
 \ind_-((v\circ\varphi)(\cdot, \, r), \, \partial N) = \ind_-(g_{r}, \, \partial N)
\]
if~$r$ is small enough.
On the other hand, the index and the boundary index are invariant by composition with a diffeomorphism.
(For smooth vector fields satisfying~\eqref{v-no-zero-bd} and~\eqref{v-transverse}, this follows by Formula~\eqref{transv index}; for arbitrary continuous fields satisfying~\eqref{v-no-zero-bd}, one argues by density.)
Therefore, we conclude that
\[
 \ind_-(v, \, C_r) = \ind_-(g_{r}, \, \partial N) \stackrel{\eqref{interpolation1}}{=} \ind_-(g, \, \partial N) ,
\]
and~\ref{item: inw} holds true.

We only need to check that~$v\in\VMO(U_r)$, with trace~$g$ on~$\partial N$; this is equivalent to proving that the map
\begin{equation*} 
  u := \begin{cases}
    v & \textrm{on } U_r \\
    G & \textrm{on } X \setminus N 
 \end{cases}
\end{equation*}
belongs to~$\VMO(U_r\cup (X\setminus N))$. (Here~$G\in\VMO(X)$ denotes the standard extension of~$g$, as defined in~\eqref{G}.)
By~\cite[Theorem~1, Eq.~(1.2)]{BN2}, this is also equivalent to 
\begin{equation} \label{interpolation4}
 \sup_{x\in W_{r - 2\varepsilon}} I_\varepsilon(u, \, x) \to 0 \qquad \textrm{as } \varepsilon\to 0 ,
\end{equation}
where we have set
\[
 I_\varepsilon(u, \, x) := \fint_{B^X_\varepsilon(x)} \fint_{B^X_\varepsilon(x)} \abs{u(y) - u(z)} \, \d\sigma(y) \, \d\sigma(z)
\]
and~$W_s := U_s\cup (X\setminus N)$, for any~$s$.
Thanks to~\cite[Lemma~7]{BN1}, we know that~$\bar v \in\VMO(U_r)$ has trace~$g$ at the boundary, that is the map
\[
 \bar u:= \begin{cases}
                   \bar v & \textrm{on } U_r \\
                   G & \textrm{on } X \setminus N
               \end{cases}
\]
belongs to~$\VMO(W_r)$. This yields 
\begin{equation} \label{interpolation5}
 \sup_{x\in W_{r - 2\varepsilon}} I_\varepsilon(\bar u, \, x) \to 0 \qquad \textrm{as } \varepsilon\to 0
\end{equation}
(again by~\cite[Theorem~1]{BN2}). For a fixed~$0 < s < r$ and~$0 < \varepsilon < (r - s)/2$, there holds
\[
 \sup_{x\in W_{r - 2\varepsilon}} I_\varepsilon(u, \, x) \leq \max\left\{ \sup_{x\in W_{s - \varepsilon}} I_\varepsilon(\bar u, \, x) + 2 \norm{\bar u - u}_{L^\infty(U_s)}, \, 
 \sup_{x\in U_{r - 2\varepsilon}\setminus U_{s - \varepsilon}} I_\varepsilon(u, \, x) \right\} .
\]
We take the upper limit as~$\varepsilon\to 0$. Using~\eqref{interpolation5} and the fact that~$u$ is continuous on~$N\setminus \partial N$, we deduce
\[
 \limsup_{\varepsilon\to 0}\sup_{x\in W_{r - 2\varepsilon}} I_\varepsilon(u, \, x) \leq 2 \norm{\bar u - u}_{L^\infty(U_s)} .
\]
Now, we let~$s\to 0$. By applying Lemma~\ref{lemma: overline}, we conclude that~\eqref{interpolation4} holds, so~$u\in\VMO(W_r)$.
\end{proof}

 We can finally give the proof of Proposition~\ref{prop: CNS}.
 \begin{proof}[Proof of Proposition~\ref{prop: CNS}] 
 	Since a field $v\in \VMO_{g}(N)$ satisfying~\eqref{v} has $\ind(v,\, N)=0$, Theorem~\ref{th: VMO morse} directly implies~\eqref{eq:sufficient}.
 	In order to prove the converse implication, let a field $g \in \VMO(\partial N)$ be given, such that~\eqref{hp g VMO} and~\eqref{eq:sufficient} hold.
Let~$v\colon U_r\to\R^d$ be the field defined by~\eqref{interpolating v}, where~$r >0$ is given by Lemma~\ref{lemma: interpolation}.
Let~$N_r:= N\setminus U_r$, and let~$V\colon N_r\to\R^d$ be any continuous field such that $V = v$ on~$C_r = \partial N_r$.
(For instance, one can take as~$V$ the standard extension of~$v_{|C_r}$, as defined by~\eqref{G}.)
As $0 \notin V(C_r)$, by the Transversality Theorem~\ref{th: transversality} there exists a smooth tangent vector field~$F$ on~$X$ such that $F$ has finitely many zeros in $N_r$,
$F_{|C_r}=v_{|C_r}$ and, by stability (Corollary~\ref{cor: stability}) and by Theorem~\ref{th: VMO morse}, that
\[
\begin{split}
    \ind(F, \, N_r) &= \ind(V, \, N_r) = \chi(N) - \ind_-(v, \, C_r) \\
    &\!\!\stackrel{\ref{item: inw}}{=} \chi(N) - \ind_-(g, \, \partial N) \stackrel{\eqref{eq:sufficient}}{=} 0.
\end{split}
\]
Let~$\tilde F$ be defined by~$\tilde F = v$ on~$U_r$ and~$\tilde F = F$ on~$N \setminus U_r$.
The field~$\tilde F$ is continuous in the interior of~$N$, belongs to~$\VMO_g(N)$ and satisfies~$F(x) \neq 0$ for any~$x\in U_r$, by Lemma~\ref{lemma: interpolation}.
Assume for the moment that $F(x) \neq 0$ for all $x\in N_r$, and set 
 \[
 	A_1 :=\{x \in N \colon |\tilde F(x)| < c_1\},   \qquad 	A_2 :=\{x \in N \colon |\tilde F(x)|>c_2\}.
 \] 
Then, the field defined by
\[
	V(x):= \left\{
		\begin{array}{ll}
			\displaystyle \tilde F(x)\frac{c_i}{|\tilde F(x)|} & \text{if }x\in A_i,\ i=1,2,	\\
			\tilde F(x)		& \text{otherwise}.
		\end{array} 
		\right.
\]
belongs to $\VMO_{g}(N)$ and satisfies~\eqref{v}.

To conclude, we note that there is a standard technique to modify a continuous field $u$ such that~$0 \notin u(\partial N)$,
\[
	\ind(u, \, N)=0,\qquad \text{and}\qquad  \# \{x \in N \colon u(x)=0\} < +\infty
\] 
into a continuous field $\tilde u$ such that $|\tilde u|>0$ and $\tilde u =u$ on $\partial N$. 
(We will apply this technique to~$u = F$ over~$N\setminus U_r$.)
We sketch here the idea. First, up to a continuous transformation, we can assume that all the zeros are contained in one coordinate neighbourhood $U$, 
with chart $\phi\colon U\subseteq N \to D \subseteq \R^n$, so we can reduce to study the vector field in coordinates:
let $D=B_1(0),\ D_{1/2}=B_{1/2}(0)$,  assume that $u\colon D\to \R^{n}$ and $|u|>0$ in $D \setminus D_{1/2}$. Then, 
\[
	0= \ind(u, \, D)=\deg \left(\frac{u}{\abs{u}}, \, \partial D, \, \S^{n - 1}\right)
\]
and there exists a continuous field $\psi\colon D \to \S^{n-1}$ such that $\psi_{|\partial D}=\frac{u}{|u|}.$
\[
	\tilde \psi(x):=\left\{
		\begin{array}{ll}
			\psi(x) & \text{if }x\in D_{1/2}\\
			\psi(x)(2\dist(x,\partial D)+(1-2\dist(x,\partial D)|u(x)|)) & \text{if }x\in  D \setminus D_{1/2},
		\end{array}
		\right.	
\]
so that $\tilde \psi(x)$ is continuous on $D$, nowhere zero, and it agrees with $u$ on $\partial D$. To conclude, the field 
\[
	\tilde u(x):=\left\{
		\begin{array}{ll}
			u & \text{if }x\in N\setminus U\\
			\phi^* \tilde \psi & \text{if }x\in  U
		\end{array}
		\right.	
\]
is continuous and nowhere zero on $N$.  Here $\phi^* \tilde \psi(x):=\d \phi^{-1}_{\phi(x)}\tilde \psi (\phi(x))$ denotes the usual pullback of $\tilde \psi$ via~$\phi$.
\end{proof}
 
 \begin{cor} \label{cor: sobolev}
  Let~$p> 1$ and $c_1,c_2>0$ and let~$g\in W^{1 - 1/p, p}(\partial N)$ be a vector field satisfying~ 
  \[
   g(x)\in T_x N, \quad  \quad c_1 \leq \abs{g(x)} \leq c_2, \,\,\,\hbox{ and }\,\,\,  \ind_-(g, \, \partial N) = \chi(N).
  \]
  Then, there exists~$v\in W^{1, p}(N)$ which satisfies~
  \[
  v(x)\in T_x N, \quad  \quad c_1 \leq \abs{v(x)} \leq c_2, 
  \]
  and has trace~$g$ at the boundary.
 \end{cor}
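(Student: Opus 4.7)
\medskip

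\noindent\textbf{Plan.} The strategy is to revisit the proof of Proposition~\ref{prop: CNS}, verifying that each step preserves $W^{1,p}$ regularity when $g\in W^{1-1/p,p}(\partial N)$.

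The main technical point concerns the near-boundary construction of Lemma~\ref{lemma: interpolation}. I would show that, when $g\in W^{1-1/p,p}(\partial N)$, the averaging extension $\bar v(\varphi(y,s)):=\bar g_s(y)$ belongs to $W^{1,p}(U_r, \, \R^d)$. Indeed, the tangential derivative $\nabla_y \bar g_s(y)$ and the radial derivative $\partial_s \bar g_s(y)$ can be expressed as convolutions of $g$ against kernels of order $s^{-1}$; for example $\partial_s \bar g_s(y)$ is proportional to $s^{-1}\bigl(\fint_{\partial B^{\partial N}_s(y)} g - \fint_{B^{\partial N}_s(y)} g\bigr)$. Applying Jensen's inequality, the $L^p$-norm of such derivatives is controlled by
\[
 \int_0^r\int_{\partial N} s^{-p}\fint_{B^{\partial N}_s(y)}|g(z)-\bar g_s(y)|^p\,\d\H^{n-1}(z)\,\d\H^{n-1}(y)\,\d s ,
\]
which, after swapping the order of integration and tracking the overlap of geodesic balls $B^{\partial N}_s(z)\cap B^{\partial N}_s(w)$, is bounded by a multiple of the Gagliardo seminorm $[g]_{W^{1-1/p,p}(\partial N)}^p$. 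Since the projection $P_N$ is smooth, the field $v:=P_N\bar v$ lies in $W^{1,p}(U_r)$, is tangent, has trace $g$, and, by Lemma~\ref{lemma: interpolation}, is continuous and nowhere vanishing on $\overline{U}_r$.

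For the interior, I would follow the construction from the proof of Proposition~\ref{prop: CNS}: Theorem~\ref{th: transversality} yields a smooth tangent field on $N\setminus U_r$ that agrees with $v$ on $C_r$ and has isolated zeros, and the hypothesis $\ind_-(g, \partial N) = \chi(N)$ combined with Theorem~\ref{th: VMO morse} forces the total interior index to vanish. The zero-removal modification at the end of the proof of Proposition~\ref{prop: CNS} can be performed in the smooth category, as it amounts to a smooth null-homotopy of a degree-zero map $\S^{n-1}\to\S^{n-1}$ supported in a coordinate ball. Gluing this smooth interior extension to the $W^{1,p}$ extension across $C_r$ produces a tangent field in $W^{1,p}(N)$ that has trace $g$, is bounded, and is uniformly bounded away from zero. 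Finally, the Lipschitz truncation $v\mapsto v\cdot \min\{c_2,\max\{c_1,|v|\}\}/|v|$ preserves $W^{1,p}$ regularity on fields bounded away from zero and enforces the pointwise bounds $c_1\leq|v|\leq c_2$.

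The main obstacle will be the Gagliardo-type estimate for the averaging extension. While classical in spirit, it requires careful bookkeeping on the curved manifold $\partial N$, typically via local coordinates and a partition-of-unity argument reducing to the flat case where the Gagliardo characterization of $W^{1-1/p,p}$ is standard. Once this estimate is in place, the rest is a straightforward Sobolev-category upgrade of Proposition~\ref{prop: CNS}.
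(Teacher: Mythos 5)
Your proposal follows essentially the same route as the paper: reuse the extension built in the proof of Proposition~\ref{prop: CNS}, observe that the only regularity issue is the near-boundary averaging field~\eqref{interpolating v}, reduce to the flat half-space via local charts and a partition of unity, and conclude via the Gagliardo estimate $\norm{v}_{W^{1,p}(U_\varepsilon)} \leq C \norm{g}_{W^{1-1/p,p}(\partial N)}$. The only difference is that the paper directly invokes Gagliardo's original construction of a right inverse to the trace operator for this bound, whereas you sketch a re-derivation via kernel estimates; your remark that the interior zero-removal step can be carried out in the smooth category is a useful clarification of the paper's claim that the extension is smooth on $N_\varepsilon$.
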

 \begin{proof}
 The case~$p = +\infty$ follows by classical facts on Lipschitz functions, therefore we assume that~$1 < p < +\infty$. 
 The extension~$v\in\VMO_{g}(N)$ we have constructed in the previous proof is actually continuous in the interior of~$N$ and smooth in~$N_\varepsilon$.
 Therefore, the corollary will be proved if we show that the field~$v$, defined by~\eqref{interpolating v}, belongs to~$W^{1, p}(U_\varepsilon)$ when~$g\in W^{1 - 1/p, p}(\partial N)$.
 By using a partition of unity and composing with local diffeomorphisms, we can assume with no loss of generality that~$U_\varepsilon = \Sigma_\varepsilon \times [0, \, \varepsilon]$, 
 where~$\Sigma_\varepsilon := [\varepsilon, \, 1 - \varepsilon]^{n - 1}\subset\R^{n - 1}$ is endowed with the norm~$\|x\| := \max_i |x_i|$.
 Then, Formula~\eqref{interpolating v} reduces to
 \begin{equation} \label{extension v}
  v(x) = \frac{1}{(2x_n)^{n - 1}} \int_{x_1 - x_n}^{x_1 + x_n} \ldots \int_{x_{n - 1} - x_n}^{x_{n -1} + x_n}  g(\xi_1, \, \ldots, \, \xi_{n - 1}) \, \d\xi_{n - 1} \ldots \, \d \xi_1 .
 \end{equation}
 Gagliardo, in his paper~\cite{Gagliardo}, used functions of this form\footnote{Actually, Gagliardo considered a function of the form
 \[
  v_*(x) = \frac{1}{x_n^{n - 1}} \int_{x_1}^{x_1 + x_n} \ldots \int_{x_{n - 1}}^{x_{n -1} + x_n} \, g(\xi_1, \, \ldots, \, \xi_{n - 1}) \, \d\xi_{n - 1} \ldots \d \xi_1,
 \]
 but his computations can be adapted to~$v$ defined by~\eqref{extension v} in a straightforward way.} to prove the existence of a right inverse for the trace operator~$W^{1, p}(\Omega)\to W^{1 - 1/p, p}(\partial\Omega)$.
 More precisely, he proved that
 \[
 \norm{v}_{W^{1, p}(U_\varepsilon)} \leq C \norm{g}_{W^{1 - 1/p, p}(\Sigma_0)} .
 \]
 This shows that~$v\in W^{1, p}(U_\varepsilon)$ as soon as~$g\in W^{1 - 1/p, p}(\Sigma_0)$, and concludes the proof.
 \end{proof}
 
 
\begin{remark} \label{remark:shaftingen}
 In our main results, Theorem~\ref{th: VMO morse} and Proposition~\ref{prop: CNS}, the boundary datum~$g$ and the field~$v$ are assumed to satisfy inequalities such as
 \[
  c_1 \leq \abs{g(x)} \leq c_2 \qquad \textrm{for a.e. } x \textrm{ and positive constants } c_1, c_2
 \]
 (see~\eqref{hp g},~\eqref{hp v}). The lower bound is the natural generalization of the condition~$g(x) \neq 0$, which makes no sense as the field~$g$ is not defined pointwise everywhere.
 On the other hand, the upper bound is a technical assumption, which is used only in the proof of Lemma~\ref{lemma: g_epsilon positive} to control the last term in~\eqref{dist g}.
 For our purposes, this assumption is not restrictive, since we are interested mainly in unit vector fields.
 However, after our results were announced, Jean Van Schaftingen remarked that the upper bound on~$g$ is unnecessary.
 For if $x, \, y\in N$ are close enough to each other and $v\in T_y N$, then one can prove
 \begin{equation*}
  \dist(v, \, T_x N) \leq C\abs{v} \dist(x, \, y) .
 \end{equation*}
 With the help of this fact and of Jensen inequality, we obtain
 \[
  \begin{split}
  \fint_{B^{\partial N}_\varepsilon(x)} \dist(g(y), \, T_x N) \, \d\sigma(y) &\leq C \varepsilon \fint_{B^{\partial N}_\varepsilon(x)} \abs{g(y)} \, \d \sigma(y) 
  \leq C \varepsilon \left(\fint_{B^{\partial N}_\varepsilon(x)} \abs{g(y)}^{n} \, \d \sigma(y)\right)^{1/n} \\
  &\leq C \varepsilon^{1/n} \norm{g}_{\VMO(\partial N)} \to 0 ,
  \end{split}
 \]
 where the last inequality follows by the continuous embedding $\VMO(\partial N)\hookrightarrow L^{n}(\partial N)$.
 Then, Lemma~\ref{lemma: g_epsilon positive} follows.
\end{remark}

 \section{An application: $Q$-tensor fields and line fields.}
 \label{sec:application}
 
  \newcommand{\Sz}{\mathscr S_0}
 \newcommand{\Q}{\mathbf Q}
 \renewcommand{\P}{\mathbb P}
 
In the mathematical modelling of Liquid Crystals two different theories are eminent. 
In the Frank-Oseen theory the molecules are represented by the unit vector field $n$ which appears in the energy~\eqref{eq:energy}. 
The main drawback of this approach is to neglect the natural head-to-tail symmetry of the crystals.  
The theory of Landau-de Gennes takes this symmetry into account by introducing a tensor-valued field, called \emph{Q-tensor}, 
to which is associated a scalar parameter~$s$ that represents the local average ordered/disordered state of the molecules. 
In the particular, but physically relevant, case when the order parameter is a positive constant, there is a bijection betweenQ-tensors and line fields. 
The differences between the vector-based and the line field-based theory have been studied in \cite{ball_zarnescu}, in two- and three-dimensional Euclidean domains. 
In this Section we have two aims: firstly we apply some of the results obtained in Section~\ref{sect: VMO index} to line fields on a compact 
surface, obtaining the VMO-analogue of Poincar\'e-Kneiser Theorem (see Theorem~\ref{theor:cont_line} below). 
In particular, we restrict to the case of a surface without boundary. Clearly, it would be 
interesting to extend Theorem~\ref{th: VMO morse} to VMO line fields. Moreover, we show how the question of orienting a line field, 
studied in \cite{ball_zarnescu}, has generally a negative answer on a compact surface. 
As it happens for liquid crystals in Euclidean domains, the elastic part 
 of the Landau-de Gennes energy for nematic shells is, at least in some simplified situations, 
 proportional to a Dirichlet type energy. See on this regard \cite{KRV11} and \cite{NapVer12E}.
Therefore, owing to the embedding of Sobolev spaces in VMO spaces,~\eqref{eq:embedding}, Proposition~\ref{prop:ultima} establishes a relation between the existence of 
finite energy Q-tensors with strictly positive order parameter and the topology of the underlying surface, thus extending our application scope from the Frank-Oseen theory 
to the (constrained) Landau-de Gennes one, for uniaxial nematic shells.

 
  \subsection{Q-tensors and line fields} 
 Nematic shells are the datum of a compact, connected
 and without boundary surface $N\subset \R^3$ coated with a thin film
  of rod-shaped, head-to-tail symmetric particles of nematic liquid crystal.
 At a given point $x\in N$, the local configuration is represented by
  a probability measure $\mu_x$ on the unit circle $C_x$ in $T_x N$.
 More precisely, for each Borel set $A \subset C_x$, $\mu_x(A)$ is
  the probability of finding a particle at $x$, with direction contained in $A$.
 To account for the symmetry of the particles, we require
  \begin{equation} 
  \label{eq:symmtr}
  \mu_x(A) = \mu_x(-A)
 \end{equation}
 for each Borel set $A \subset C_x$.
 Due to this constraint, the first-order momentum of $\mu_x$ vanishes.
 Hence, we are naturally led to consider the second-order momentum
 \begin{equation} \label{def Q}
  Q = \sqrt 2 \int_{C_x} \left(p^{\otimes 2} -  \frac12 \P_x \right) \, \d\mu_x(p) ,
 \end{equation}
 where $(p^{\otimes 2})_{ij}:=p^i p^j$ and $\P_x$ denotes the orthogonal projection on $T_xN$. Note that 
 $Q$ has been suitably renormalized, so that $Q = 0$ when $\mu_x$ is the uniform measure, and $|Q|=1$ when $\mu_x$ is a Dirac measure concentrated on one direction (see~\eqref{eq:normQ} and~\eqref{eq:Qxi}).
 This formula defines a real $3\times 3$ symmetric and traceless matrix called \emph{$Q$-tensor}. 
 As we are interested in fields on surfaces, we replaced the usual three-dimensional renormalization term $-\frac13 Id$ by  $-\frac12 \P_x$ (see, e.g., \cite{KRV11}). 
Once we have fixed an orientation on $N$, we let $\gamma$ denote the Gauss map. By definition~\eqref{def Q}, $Q\gamma(x) = 0$,  which translates
the intuitive fact that the probability of finding a particle in the normal direction of the surface is zero. 
We call this type of anchoring a {\itshape degenerate (tangent) anchoring} (see \cite{NapVer12E}).

For any $x\in N$ we define the class of ``admissible tensors'' at $x$ as
 \begin{equation} \label{Q_x}
  \Q_x := \left\{ Q\in \Sz \colon Q\gamma(x) = 0 \right\},
 \end{equation}
 where $\Sz$ is the space of $3\times 3$ real, symmetric, and traceless matrices, endowed with the scalar product $Q \cdot P = \sum_{ij} Q_{ij}P_{ij}$.
 It is clear from the definition that $\Q_x$ is a linear subspace of $\Sz$ of dimension $2$ (this can be easily checked, e.g., by proving that the map $\Sz \to \R^3$ given by 
 $Q \mapsto Q \gamma(x)$ is surjective).
 Moreover, $\Q_x$ varies smoothly with $x$.
 
 \begin{lemma} \label{lemma: Q bundle}
  The set
  \[
   \Q := \coprod_{x\in N} \Q_x ,
  \]
  equipped with the natural projection $(x, \, Q) \mapsto x$, is a smooth vector bundle on $N$.
 \end{lemma}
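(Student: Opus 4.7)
The plan is to realize $\Q$ as a rank-$2$ smooth subbundle of the trivial bundle $N\times \Sz$. The first step is to verify the fiberwise linear algebra. For every $x\in N$, the map $\Phi_x\colon \Sz\to \R^3$ defined by $\Phi_x(Q) := Q\gamma(x)$ is linear, and since $\gamma(x)\neq 0$ one checks (by writing down explicit preimages of an orthonormal basis of $\R^3$) that $\Phi_x$ is surjective. Thus $\Q_x = \ker\Phi_x$ has constant dimension $\dim\Sz - 3 = 5 - 3 = 2$. Because $\gamma$ is smooth, the family $\{\Phi_x\}_{x\in N}$ assembles into a smooth bundle morphism $\Phi\colon N\times\Sz \to N\times \R^3$ of constant rank, and by a standard result of differential topology its kernel is a smooth subbundle.

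To make this explicit (and to exhibit local trivializations directly), I would construct local frames for $\Q$ as follows. Fix $x_0\in N$ and a neighborhood $U\subset N$ on which a smooth orthonormal frame $(e_1,e_2)$ of $TN$ is defined; such a frame exists on any sufficiently small coordinate patch by Gram--Schmidt applied to the coordinate basis. Set
\[
E_1(x) := \frac{1}{\sqrt{2}}\bigl(e_1(x)\otimes e_2(x) + e_2(x)\otimes e_1(x)\bigr),\qquad
E_2(x) := \frac{1}{\sqrt{2}}\bigl(e_1(x)\otimes e_1(x) - e_2(x)\otimes e_2(x)\bigr).
\]
Both are smooth $\Sz$-valued maps on $U$. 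They are symmetric by construction, traceless because $|e_1|=|e_2|=1$ and $e_1\cdot e_2=0$, and they annihilate $\gamma(x)$ since $e_i(x)\cdot\gamma(x)=0$; hence $E_j(x)\in \Q_x$. A short computation with the scalar product $Q\cdot P = \sum_{ij}Q_{ij}P_{ij}$ shows $(E_1,E_2)$ is orthonormal in $\Sz$, so it is a basis of the $2$-dimensional space $\Q_x$.

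The associated local trivialization is then
\[
\Psi_U\colon U\times \R^2 \longrightarrow \Q|_U,\qquad \Psi_U\bigl(x,(a,b)\bigr) := \bigl(x,\, a E_1(x) + b E_2(x)\bigr),
\]
which is smooth, fiberwise linear, and commutes with the projection to $N$. Given two overlapping trivializations associated with frames $(e_1,e_2)$ and $(e_1',e_2')$, the transition map is a smooth $\mathrm{GL}(2,\R)$-valued function on $U\cap U'$, as it is obtained from the smooth change-of-frame matrix in $TN$ through the polynomial assignment $(e_1,e_2)\mapsto (E_1,E_2)$. This verifies the cocycle condition and shows that $\Q$ is a smooth rank-$2$ vector bundle over $N$.

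The only delicate point is the very existence of a local orthonormal tangent frame, but this is entirely local and causes no trouble; the (possibly nontrivial) global topology of $TN$ is precisely what gets encoded in the transition functions. No step is a substantive obstacle: once fiberwise dimension is pinned down by the surjectivity of $Q\mapsto Q\gamma(x)$, the construction of $(E_1,E_2)$ is a direct computation and the bundle axioms follow.
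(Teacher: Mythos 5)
Your proof is correct, and the core of it — building a smooth local orthonormal frame for $\Q$ from a local orthonormal tangent frame — is the same device the paper uses (your $E_1$, $E_2$ are, up to the $1/\sqrt2$ normalization and a swap, exactly the paper's $Y$, $X$). The main thing you add is the preliminary ``abstract'' argument: realizing $\Q$ as the kernel of the constant-rank bundle morphism $\Phi\colon N\times\Sz\to N\times\R^3$, $(x,Q)\mapsto (x,Q\gamma(x))$, and invoking the general fact that the kernel of a constant-rank morphism of vector bundles is a smooth subbundle. That argument is cleaner and requires no coordinate computations; the explicit frame that you (and the paper) then write down is really just making the local trivializations concrete, which is useful later since the frame $(X,Y)$ is used elsewhere (e.g.\ in the subsequent discussion and in \cite{INSZ}). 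So: same approach at heart, with a more self-contained high-level justification on top.
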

 \proof
 Consider a smooth orthonormal frame $(n, \, m, \, \gamma)$ defined on a coordinate neighbourhood of $N$, where $(n, \, m)$ is a basis for the tangent bundle of $N$.
 With straightforward computations, one can see that the matrices
 \[
  X_{ij} := n_i n_j - m_i m_j, \, \qquad Y_{ij} := n_i m_j + m_i n_j,
 \]
 \[
  E_{ij} := \gamma_i\gamma_j - \frac13 \delta_{ij}, \qquad F_{ij} := n_i \gamma_j + \gamma_i n_j, \qquad G_{ij} := m_i\gamma_j + \gamma_i m_j
 \]
 define an orthogonal frame for $\Sz$. Moreover, $(X(x), \, Y(x))$ is a basis for $\Q_x$, at each point $x$
 (see \cite{INSZ} for a use of this basis with a particular choice for $(n,m,\gamma)$).
 The lemma follows easily.
 \endproof

We can now analyze the special structure of the matrices in $\Q_x$. 
 Fix $Q\in \Q_x$,  from~\eqref{Q_x} it follows that $\gamma(x)$ is an eigenvector of $Q$, corresponding to the zero eigenvalue.
 Since $Q$ is symmetric and traceless, there exists an orthonormal basis $(n, \, m)$ of $T_x N$, whose elements are eigenvectors of $Q$, and the corresponding eigenvalues are opposite.
 Thus, denoting by $n$ the eigenvector corresponding to the positive eigenvalue, $Q$ can be written in the form 
 \begin{equation}
 \label{eq:useless}
  Q = \frac s2 \left(n^{\otimes 2} - m^{\otimes 2} \right)
 \end{equation}
 for some $s \geq 0$ (If $s=0$, then $Q=0$ and any choice of $n$ is allowed).
 Using the identity $n^{\otimes 2} + m^{\otimes 2} = \P_x$, we conclude that for each $Q\in \Q_x$ there exist a number $s\geq 0$ and a unit vector $n\in T_x N$ such that
 \begin{equation} 
 \label{representation}
  Q = s \left(n^{\otimes 2} - \frac12 \P_x \right) .
 \end{equation}
 The number $s$, called the order parameter, is uniquely determined, and from~\eqref{eq:useless} we obtain
 \begin{equation}
 \label{eq:normQ}
  \abs{Q}^2 =Q\cdot Q = \frac{s^2}{4}\left(n^{\otimes 2} - m^{\otimes 2} \right)\cdot \left(n^{\otimes 2} - m^{\otimes 2} \right) 
  =\frac{s^2}{4}\left(n^{\otimes 2}\cdot n^{\otimes 2}+ m^{\otimes 2}\cdot m^{\otimes 2} \right)=\frac{s^2}{2}.
 \end{equation}
 When $Q \neq 0$, $n$ is also uniquely determined, up to a sign.
 Thus, each $Q\in \Q_x \setminus \{0\}$ identifies a positive number and a (un oriented) direction in $T_x N$, that is, a \emph{line field}.
 
 A \textit{line field} on $N$ (also called \textit{1-distribution}) is an assignment of a (non zero) tangent direction --- but not an orientation --- to each point
  of the submanifold $N$.
 More precisely, following \cite[Chapter 6]{Spivak} a line field $L$ is a function
  that assigns to each point $x$ of a manifold $N$ a one-dimensional subspace $L(x) \subset T_xN$.  Then $L$ is spanned by a vector field \textit{locally}; 
  that is, we can choose a vector field $v$ such that $0\neq v(x)\in L(x)$ for all $x$ in some neighbourhood of $x$. 
  We say that $L$ is a smooth (continuous) 1-distribution if the vector field $v$ can be chosen to be smooth (continuous) in a neighbourhood of each point. 

 Conversely, to a given line $\ell\subset T_x N$ generated by a unit vector $\xi\in T_x N$
 it is possible to associate the measure 
 $\mu_x := \frac 12 \delta_\xi + \frac 12 \delta_{-\xi}$ and thus by~\eqref{def Q}
%
the direction $\xi$ corresponds to
 \begin{equation}
 \label{eq:Qxi}
  	Q = \sqrt2 \left(\xi^{\otimes 2} -  \frac12 \P \right) ,
 \end{equation}
 which is a unit $Q$-tensor.
 The reason for associating to the direction $\xi$ the measure $\mu_x 
 = \frac 12 \delta_\xi + \frac 12 \delta_{-\xi}$, instead of simply $\delta_\xi$, is to be found in the head-to-tail symmetry of the molecules expressed by~\eqref{eq:symmtr}.
%
 Thus, line fields on $N$ can be identified with sections of the bundle $\Q$, having modulus one.
 
 In the following, we relax the condition $\abs{Q}=1$, by requiring $|Q|$ to be bounded and uniformly positive.

 \subsection{Existence of $\VMO$ line fields} 
 
In what follows, we assume that $N\subset \R^3$ is a smooth, compact, connected 
 surface, without boundary.
 Based on Proposition~\ref{prop: unbounded} and on the results of Section~\ref{sect: VMO index}, 
 in Proposition~\ref{prop:ultima} 
we prove that the existence of
 a $\VMO$ line field is subject to the same topological 
 obstruction that holds for continuous vector fields.
 If we restrict to the continuous setting, 
the following result is classical (see, e.g., \cite[Theorem 2.4.6, p. 24]{HectorHirsch})
 \begin{theor}[Poincar\'e-Kneiser]
 \label{theor:cont_line}
 Let $N$ be a compact, connected submanifold of $\R^{n+1}$. Then
 a continuous line field exists if and only if $\chi(N) = 0$.
 \end{theor}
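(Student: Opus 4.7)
My approach is to split the equivalence into two implications, handling the easy \emph{if} direction via results already developed in the paper, and reducing the \emph{only if} direction to the Poincar\'e--Hopf theorem for vector fields by passing to the orientation double cover of the line field.

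The \emph{if} direction is essentially immediate. If $\chi(N) = 0$, then Proposition~\ref{prop: unbounded} already guarantees the existence of a continuous, nowhere vanishing tangent vector field $v$ on $N$ (in the continuous case, one does not even need the $\VMO$ machinery, only the classical Poincar\'e--Hopf theorem). The pointwise spans $L(x) := \R\, v(x)$ then define a continuous line field on $N$.

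For the converse, suppose $L$ is a continuous line field on $N$. After a standard smoothing argument (one can approximate $L$ uniformly on $N$ by a smooth rank one subbundle of $TN$, using a partition of unity and the compactness of the space of tangent lines), I may assume $L$ is a smooth rank one subbundle of $TN$. Let
\[
 \pi\colon \tilde N \to N, \qquad \tilde N := \{(x, \, \xi) \colon x\in N, \ \xi\in L(x), \ |\xi| = 1\},
\]
be the unit sphere bundle of $L$. Then $\tilde N$ is a compact manifold of the same dimension as $N$, and $\pi$ is a smooth $2$-sheeted covering map. The tautological assignment $\tilde s(x, \, \xi) := \xi$ is a continuous, nowhere vanishing section of the pulled back bundle $\pi^* L \subset \pi^* TN$. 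Since $\pi$ is a local diffeomorphism, $d\pi$ gives a fibrewise isomorphism $T\tilde N \to \pi^* TN$, and hence $\tilde v := (d\pi)^{-1}\circ \tilde s$ is a continuous, nowhere vanishing tangent vector field on $\tilde N$.

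Applying the classical Poincar\'e--Hopf theorem (equivalently, the already established Proposition~\ref{prop: unbounded} applied to $\tilde N$ after a suitable embedding) yields $\chi(\tilde N) = 0$. Combining this with the multiplicativity of the Euler characteristic under finite coverings, $\chi(\tilde N) = 2\,\chi(N)$, one concludes $\chi(N) = 0$. I expect the only genuinely delicate point to be the reduction from a continuous to a smooth line field, which is needed to make the sphere bundle $\tilde N$ a smooth manifold and to use $d\pi$; this is routine but not entirely a one-line citation, and it can alternatively be bypassed by working with topological tangent microbundles, for which the identity $\chi(M) = \langle e(\tau M), \, [M]\rangle$ and the resulting obstruction-theoretic formulation of Poincar\'e--Hopf remain valid.
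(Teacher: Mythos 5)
Your proof is correct, but note that the paper does not actually prove Theorem~\ref{theor:cont_line}: it is stated as a classical fact and attributed to Hector--Hirsch, so there is no ``paper proof'' to compare against. Your argument is the standard one for this result. The \emph{if} direction is exactly as you say: a nowhere vanishing continuous tangent vector field exists when $\chi(N)=0$, and its pointwise span is the desired line field. For the \emph{only if} direction, the passage to the orientation double cover of $L$ is the right idea, and the computation $\chi(\tilde N)=2\chi(N)$ (multiplicativity of $\chi$ under finite covers) together with Poincar\'e--Hopf on $\tilde N$ closes the argument. Two small remarks. First, your parenthetical ``equivalently, Proposition~\ref{prop: unbounded} applied to $\tilde N$'' is slightly misleading: Proposition~\ref{prop: unbounded} is stated for hypersurfaces of $\R^{n+1}$, and the double cover $\tilde N$ need not embed in codimension one; you should just invoke the classical Poincar\'e--Hopf theorem directly (which needs no such embedding, nor orientability of $\tilde N$). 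Second, the smoothing step you flag as the delicate point can actually be bypassed: even for a merely continuous line field $L$, the set $\tilde N=\{(x,\xi):\xi\in L(x),\ |\xi|=1\}$ is a topological $2$-sheeted cover of the smooth manifold $N$, so it inherits a unique smooth structure making $\pi$ a smooth covering map; then $\tilde v=(d\pi)^{-1}\tilde s$ is a \emph{continuous} nowhere vanishing tangent field on $\tilde N$, and Poincar\'e--Hopf in the continuous setting (e.g., as developed in Appendix~\ref{app: cont index} of the paper) already gives $\chi(\tilde N)=0$. This makes the proof self-contained without any approximation of $L$.
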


 \begin{defn} \label{def: Q}
  A $\VMO$ line field on $N$ is a map $Q\in \VMO(N, \, \Sz)$, such that
  \begin{equation} \label{Q}
  Q(x) \in \Q_x \qquad \textrm{and} \qquad c_1 \leq \abs{Q(x)} \leq c_2
 \end{equation}
 for some constants $c_1, c_2 > 0$ and $\H^2$-a.e. $x\in N$. 
 \end{defn}
 
 The condition $Q\in \VMO(N, \, \Sz)$ makes perfectly sense, 
 because $\Sz\simeq \R^5$ is a finite-dimensional linear space.

 \begin{prop}
 \label{prop:ultima}
 If a $\VMO$ line field on $N$ exists, then $\chi(N) = 0$, that is, $N$ has genus $1$.
 \end{prop}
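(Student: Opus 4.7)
The plan is to regularize the VMO line field~$Q$ into a continuous, nonvanishing section of the bundle~$\Q$, and then reduce to the classical Poincar\'e--Kneser Theorem~\ref{theor:cont_line}.

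Given~$Q\in\VMO(N,\,\Sz)$ satisfying~\eqref{Q}, define
\[
 \bar Q_\varepsilon(x) := \fint_{B_\varepsilon(x)} Q(y) \, \d\sigma(y), \qquad Q_\varepsilon(x) := \proj_{\Q_x} \bar Q_\varepsilon(x),
\]
where the projection is well-defined since $\Q_x$ is a two-dimensional linear subspace of~$\Sz$ by Lemma~\ref{lemma: Q bundle}. The continuity of~$Q_\varepsilon$ on~$N$ follows from the dominated convergence theorem (giving continuity of $(x,\varepsilon)\mapsto \bar Q_\varepsilon(x)$) together with the smooth dependence of $\Q_x$ on~$x$, exactly as in the proof of Lemma~\ref{lemma: constant}.

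The key step is to show that, for $\varepsilon$ small, $|Q_\varepsilon(x)|$ is uniformly bounded away from zero. Setting $S_x := \{M\in\Q_x \colon c_1 \leq |M| \leq c_2\}$, we adapt Lemma~\ref{lemma: g_epsilon positive} by writing
\[
 \dist(\bar Q_\varepsilon(x),\,S_x) \leq \fint_{B_\varepsilon(x)} |\bar Q_\varepsilon(x) - Q(y)| \, \d\sigma(y) + \fint_{B_\varepsilon(x)} \dist(Q(y),\,S_x)\, \d\sigma(y).
\]
The first term vanishes uniformly as~$\varepsilon\to 0$ by Lemma~\ref{lemma: VMO}. For the second, compactness of~$N$ and smoothness of the bundle~$\Q\to N$ (Lemma~\ref{lemma: Q bundle}) yield the analog of~\eqref{geom conv}: $\sup_{\dist(x,y)\leq\varepsilon}\sup_{M\in S_y}\dist(M,\,S_x)\to 0$. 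Next, the analog of Lemma~\ref{lemma: overline} gives $\sup_{x\in N}|Q_\varepsilon(x) - \bar Q_\varepsilon(x)|\to 0$: using a smooth local orthonormal frame for the complement of~$\Q$ inside~$\Sz$, the argument of~\eqref{overline1}--\eqref{overline2} applies verbatim, with the BMO--$L^{p'}$ embedding controlling the relevant integral. Combining these, $Q_\varepsilon(x)$ lies in an arbitrarily small neighbourhood of~$S_x$ when~$\varepsilon$ is small, hence $|Q_\varepsilon(x)|\geq c_1/2$ uniformly in~$x$.

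Finally, the continuous, nonvanishing section~$Q_\varepsilon$ of~$\Q$ defines a continuous line field on~$N$: by the representation~\eqref{representation}, each~$Q_\varepsilon(x)$ has a well-defined positive eigenspace in~$T_x N$, and this line varies continuously with~$x$ since $Q_\varepsilon$ is continuous and stays away from~$0$. Theorem~\ref{theor:cont_line} then forces $\chi(N)=0$. The main obstacle in this plan is the key step, which requires transferring the scalar estimates of Lemmas~\ref{lemma: g_epsilon positive} and~\ref{lemma: overline} to sections of the non-trivial smooth vector bundle~$\Q\to N$; the translation is mechanical, but uses in an essential way the smoothness of~$\Q$ provided by Lemma~\ref{lemma: Q bundle}.
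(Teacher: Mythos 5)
Your proposal is correct and follows essentially the same route as the paper: mollify $Q$ to $\bar Q_\varepsilon$, project onto the fibre $\Q_x$, adapt Lemmas~\ref{lemma: g_epsilon positive} and~\ref{lemma: overline} to the smooth bundle $\Q\to N$ (Lemma~\ref{lemma: Q bundle}) to get a uniform lower bound $|Q_\varepsilon|\geq c_1/2$, extract the continuous line field $n^{\otimes 2}$ via the representation~\eqref{representation}, and invoke Theorem~\ref{theor:cont_line}. The paper merely says ``arguing as in Lemma~\ref{lemma: g_epsilon positive}'' where you spell out the bundle-valued adaptation, but the argument is the same.
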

 
 \proof
 The proof is based on the arguments of Section~\ref{sect: VMO index}, with straightforward adaptations.
 We approximate $Q$ with a family of continuos functions, by setting
 \[
  \bar Q_\varepsilon (x) := \fint_{B^n_\varepsilon(x)} Q(y) \, \d\sigma(y)
 \]
 for each $x\in N$ and $\varepsilon\in (0, \,r_0)$.
 Then, we define
 \[
  Q_\varepsilon(x) := \proj_{\Q_x} \bar Q_\varepsilon(x) \qquad \textrm{for } x\in N.
 \]
 The functions $Q_\varepsilon$ are continuous, since the $Q_x$'s vary smoothly (see Lemma~\ref{lemma: Q bundle}).
 Owing to~\eqref{Q}, and arguing as in Lemma~\ref{lemma: g_epsilon positive}, 
  it can be proved that
 \[
  \frac{c_1}{2} \leq \abs{Q_\varepsilon(x)} \leq 2c_2
 \]
 for all $x\in N$ and $\varepsilon$ small enough.
 In view of formula~\eqref{representation}, each $Q_\varepsilon$ induces a continuous line field on $N$.
 In fact, the continuity of $Q_\varepsilon$ gives the continuity of $\vert Q_\varepsilon\vert$. Consequently, we have that
 $s$ is a continuous function, thanks to~\eqref{eq:normQ}. On the other hand,
 the representation formula
~\eqref{representation} gives that 
 $$
  n^{\otimes 2}(x) = \frac{Q(x)}{s(x)} + \frac12 \P_x,
 $$
 which implies the continuity of $n^{\otimes 2}$ thanks to the assumed strict positivity 
 of $s$ and thanks to the continuity of the projection operator. The tensor $n^{\otimes 2}$
 is the line field we were looking for. 
 Thus, by Theorem~\ref{theor:cont_line}, it must be $\chi(N) = 0$.
 \endproof
 \bigskip

 \subsection{Orientability of line fields} 
 A typical problem in the study of line fields is to understand in which circumstances
 a Q-tensor can be described in terms of a vector, that is 
when, given a tensor field $Q$ with a specified regularity, one can find
 a unit vector field $n$ with the same regularity, such that (in three dimensions)
 \begin{equation}
 \label{eq:orientation}
 	Q = s \left(n^{\otimes 2} - \frac13 \mathbb{I} \right) 
 \end{equation}
 for some positive constant $s$. In other words, we are trying to prescribe an orientation
 for the Q-tensor without
 creating artificial discontinuities in the vector $n$. 
 If for a given tensor $Q$ we can find a vector $n$ for which the 
 representation~\eqref{eq:orientation} holds, we say that $Q$ is 
 {\itshape orientable}, otherwise {\itshape non-orientable}.
 The problem of the orientability of a Q-tensor 
 has been addressed and solved by Ball and Zarnescu in
  \cite{ball_zarnescu}, in the case of two- and three-dimensional Euclidean domains.
They showed that 
 the conditions for 
 orienting a given tensor field are of topological as well as of
 analytical nature. Precisely, they require a Sobolev-type regularity,
 i.e. $Q\in W^{1,p}(\Omega)$ with $pge 2$, together with
 the condition that the domain $\Omega$ be
 simply connected.  
 
Regarding Q-tensor fields on manifolds (which we assume here to be compact, connected, without boundary), we observe that
 there exists no two-dimensional surface $N$ and exponent $p\geq 2$ such that 
\[
	Q\in W^{1,p}(N)\quad \Rightarrow \text{Q is orientable}.
\]
Indeed, by Proposition~\ref{prop: unbounded} 
 the only surface which allows for the existence
 of a unit vector field with regularity at least $W^{1,2}$ is the torus, which is not simply connected, and on which  
simple examples of smooth nonorientable line fields can easily be constructed (see Fig.~\ref{fig_one}).
\begin{figure}[ht] 
\centering 
\parbox{.46\textwidth}{
	\centering
	\includegraphics[height=3.5cm]{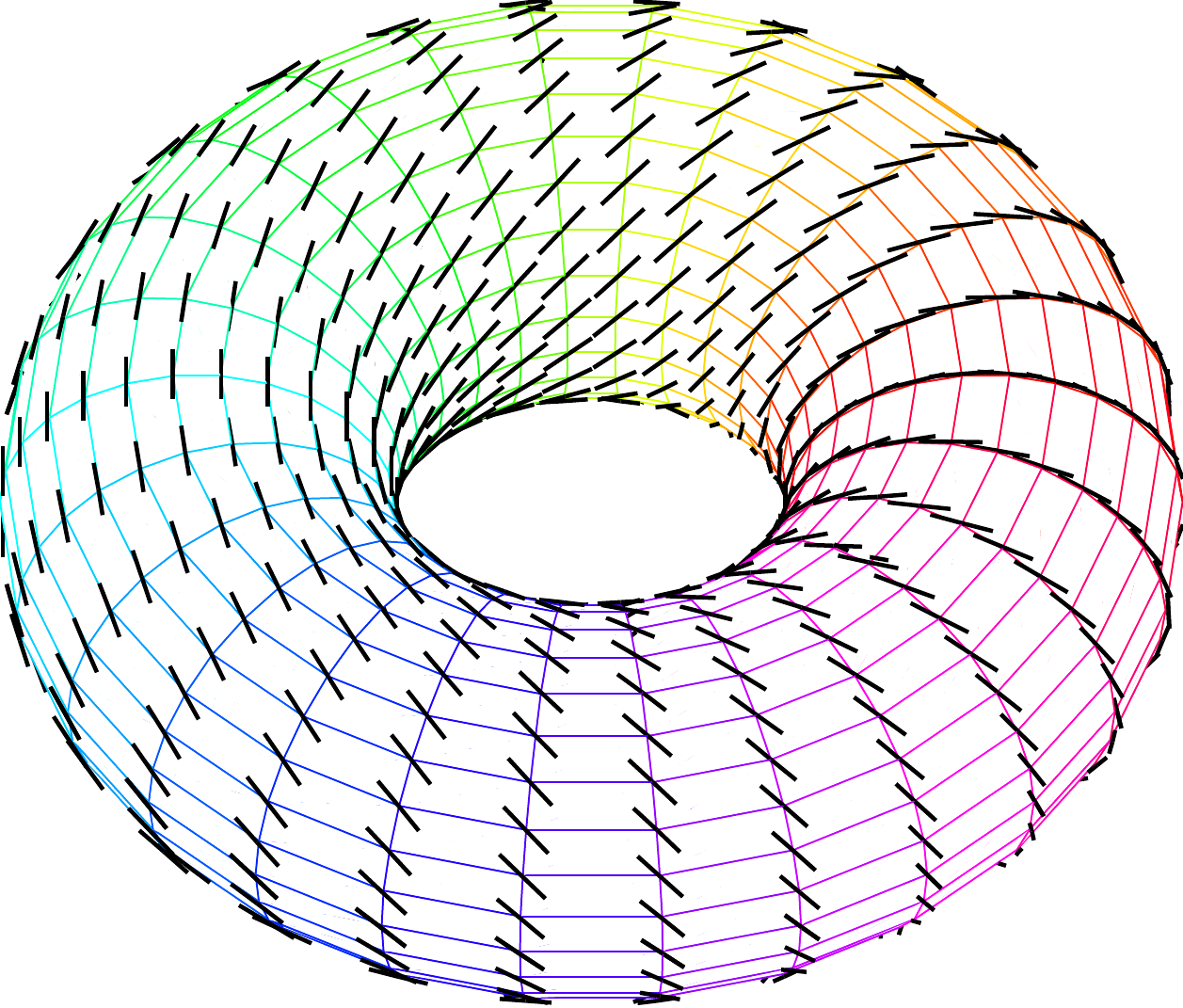} \\ \vspace{5pt}
	\mbox{$n_0^{\otimes 2}$} 
}
	\qquad 
\begin{minipage}{.46\textwidth}
	\centering
	\includegraphics[height=3.5cm]{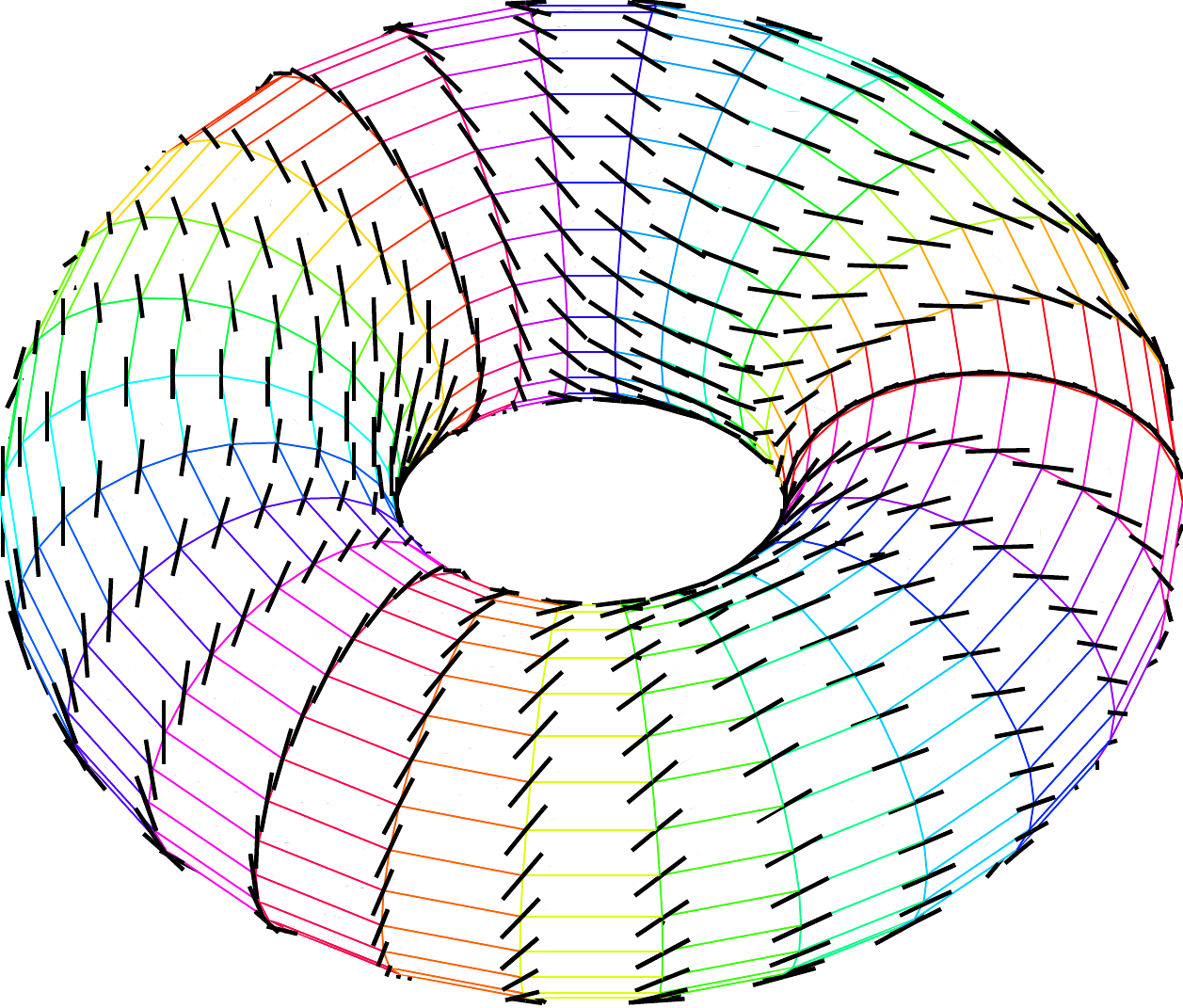} \\ \vspace{5pt}
	\mbox{$n_1^{\otimes 2}$}
\end{minipage}
		\caption{The case of an axisymmetric torus, with radii $R=2,r=1$, parametrized by $X(\theta,\phi) =  	((2+\cos \theta)\cos \phi, 
			(2+ \cos \theta)\sin \phi, 
			\sin \theta)$, on $[0,2\pi[ \times [0,2\pi[$. Let $e_\theta:=\partial_\theta X/|\partial_\theta X|$, $e_\phi:=\partial_\phi X/|\partial_\phi X|$. 
			We give a schematic representation of the two line fields defined via
\[
	n_i(\theta,\phi):= \cos\left(\frac{2i+1}{2}\phi\right)e_\theta +\sin\left(\frac{2i+1}{2}\phi\right)e_\phi,\quad i=0,1.
\]
}
\label{fig_one}
\end{figure}

 \section*{Acknowledgments}
The authors are grateful to Francesco Bonsante, Fabrice Bethuel and Jean Van Schaftingen, for inspiring conversations and suggestions.
We also acknowledge the anonymous referee for his/her careful reading of the manuscript and for his/her
precise comments which surely improved the presentation of the results.
A.S. gratefully acknowledges the Isaac Newton Institute for Mathematics in Cambridge where part of this work has been done during his participation to the program 
\textit{Free Boundary Problems and Related Topics}. Finally, A.S. and M.V. have been supported by the Gruppo Nazionale per l'Analisi Matematica, la Probabilit\`a e le loro Applicazioni (GNAMPA) 
of the Istituto Nazionale di Alta Matematica (INdAM).

 \appendix
 

\section{The index of a continuous field}
\label{app: cont index}

 In this Appendix we collect some known results (see, e.g.,~\cite{hirsch}) on the index for continuous vector fields without assuming the finiteness of its zeros. 
 This goal can be achieved quite straightforwardly, by applying a fundamental tool of differential geometry: the transversality theorem. 
 Such a construction is usually given for granted but, for the reader's convenience, in this section we present its main features. 
 As a consequence of the transversality theorem, we are able to extend some properties of the classical index of a vector field, namely excision, invariance under homotopy, 
 and stability, to continuous vector fields with any number of zeros.
 In Propositions~\ref{prop: cont excision} and~\ref{prop: general homotopy} and in Corollary~\ref{cor: stability} we give the corresponding statements.
 
 Let us start by recalling the definition of transversality. 
 Throughout this section, we denote by $X\subset\R^d$ a compact, connected and oriented manifold without boundary.
 Also, let $E$ be a smooth manifold (without boundary), $\varphi\colon X \to E$ a map of class $\mathscr C^1$, and $Y \subset E$ a submanifold.
 \begin{defn} \label{def: transverse}
  The map $\varphi$ is said to be transverse to $Y$ if and only if, for all $x\in \varphi^{-1}(Y)$, we have 
 \[
  \d \varphi_{x}(T_x X) + T_{\varphi(x)} Y = T_{\varphi(x)} E .
 \]
 \end{defn}
 In our case of interest, $E = TX$ is the tangent bundle of $X$, equipped with the natural projection $\pi\colon E \to X$ given by $(x, \, w)\mapsto x$.
 We take $\varphi$ to be a section of $\pi$ --- that is, a map $\varphi\colon X \to E$ such that $\pi\circ \varphi = \Id_X$.
There is a natural bijection between sections of~$\pi$ and vector fields, i.e. maps~$v\colon X\to\R^d$ which satisfy~$v(x)\in T_x N$ for any~$x\in X$.
 For each section~$\varphi$ can be written in the form
 \begin{equation*} \label{phi-v}
   \varphi(x) = (x, \, v(x)) \qquad \textrm{for all } x\in X
 \end{equation*}
 for a unique vector field~$v$, which is as regular as~$\varphi$. Conversely, given~$v$ this formula uniquely defines a section~$\varphi$ of~$\pi$.
 Finally, we take $Y$ as the image of the zero section, that is,
 \[
  Y := \left\{(x, \, 0) \colon x\in X \right\} \subset E .
 \]
 Clearly, $Y$ is a submanifold of $E$, diffeomorphic to $X$, and $\varphi(x)\in Y$ if and only if $v(x) = 0$.
 
 Fix a point~$x\in X$ and consider a chart $f \colon V \to \R^n$ defined in an open neighbourhood~$V$ of~$x$.
 The map~$f$ naturally induces a chart~$F\colon TV \to \R^{2n}$ of~$TX$, by setting~$F(y, \, v) := (f(y), \, \d f_y(v))$ for any~$y\in V$ and~$v\in T_y X$.
 Let~$f_* v\colon f(V) \subset \R^n \to \R^n$ be defined as in Equation~\eqref{v-coordinates}. Then, there holds
 \[
  \left( F\circ \varphi\circ f^{-1}\right) (z) = \left( z, \, f_*v(z) \right) \qquad \textrm{for } z\in f(V)\subset\R^n
 \]
 and, by interpreting Definition~\ref{def: transverse} through the chart~$F$, we deduce the
 \begin{prop}
 The map $\varphi$ is transverse to $Y$ if and only if for all $x\in v^{-1}(0)$ the differential $\d (f_*v)_{f(x)}$ is invertible.
 \end{prop}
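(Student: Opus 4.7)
My plan is to reduce the claim to a linear-algebra statement after passing to coordinates through the chart $F$ induced by~$f$. First I would note that $\varphi^{-1}(Y) = v^{-1}(0)$, so transversality needs to be checked only at the zeros of~$v$, and since the condition is intrinsic, it is equivalent to its translation in any smooth chart on~$TX$.

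Fix $x\in v^{-1}(0)$ and set $z_0 := f(x)$. Using the identity $F\circ \varphi\circ f^{-1}(z) = (z, f_*v(z))$ recorded just before the statement, differentiation at~$z_0$ gives
\[
 \d(F\circ \varphi\circ f^{-1})_{z_0}(a) = \left(a, \, \d(f_*v)_{z_0}(a)\right) \qquad \textrm{for } a\in\R^n ,
\]
so, read through~$F$, the subspace $\d\varphi_x(T_x X)$ corresponds to the graph of $\d(f_*v)_{z_0}\colon\R^n\to\R^n$ inside $\R^n\times\R^n$. Since $F$ sends $Y\cap TV$ onto $f(V)\times\{0\}$, the subspace $T_{\varphi(x)}Y$ corresponds to $\R^n\times\{0\}$, while $T_{\varphi(x)}E$ corresponds to all of $\R^{2n}$. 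With these identifications, transversality at~$x$ becomes the equality
\[
 \left\{\left(a, \, \d(f_*v)_{z_0}(a)\right) \colon a\in\R^n\right\} + \R^n\times\{0\} = \R^n\times \R^n ,
\]
and a one-line inspection shows that this holds if and only if $\d(f_*v)_{z_0}$ is surjective. As $\d(f_*v)_{z_0}$ is an endomorphism of~$\R^n$, surjectivity coincides with invertibility, which yields the claimed characterization.

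The argument is essentially bookkeeping, and I do not expect any real obstacle. The only point worth spelling out carefully is the compatibility of the chart~$F$ with the smooth structure on~$TX$, so that the intrinsic transversality condition of Definition~\ref{def: transverse} really does correspond to the coordinate statement above; this is standard, but it is the one place where a casual reader might want more detail.
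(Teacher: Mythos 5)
Your argument is correct and is exactly the computation the paper has in mind: the text introduces the chart $F$ and the identity $F\circ\varphi\circ f^{-1}(z)=(z,\,f_*v(z))$ and then simply asserts the proposition ``by interpreting Definition~\ref{def: transverse} through the chart $F$,'' leaving the bookkeeping implicit. You have filled in precisely that bookkeeping — the graph description of $\d\varphi_x(T_xX)$, the identification of $T_{\varphi(x)}Y$ with $\R^n\times\{0\}$, and the observation that surjectivity of the endomorphism $\d(f_*v)_{z_0}$ is equivalent to invertibility — so there is no gap and no divergence from the paper's route.
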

 
 Vector fields in these conditions will simply be called transverse fields.
 As noted in Subsection~\ref{subsect: cont index}, if~$U\subset X$ is an open set and~$v$ is a transverse field on~$\overline U$ such that
 \[
  0 \notin v(\partial U) ,
 \]
 then the index of~$v$ on~$U$ is well-defined by the formula
 \begin{equation} \label{transv index-app}
  \ind(v, \, U) := \sum_{x\in v^{-1}(0)\cap U} \mathrm{sign } \det \d (f_* v)_{f(x)}
 \end{equation}
 or, equivalently, by Formula~\eqref{smooth index}.
 Since we want to extend the definition of index to any continuous field, it is natural to ask whether a continuous field can be approximated by transverse fields.
 The transversality theorem gives a positive answer.
 This result, due to Thom (see \cite{Thom1, Thom2}), states that transverse mappings are a dense subset of continuous mappings. 
 The statement that we present here is \cite[Theorem 14.6]{BJ}.
 This formulation is convenient for our purposes, because it guarantees that if $\varphi$ is a section of $\pi$, then the approximating transverse maps can be chosen to be sections as well.
 
 \begin{theor}[Transversality theorem] \label{th: transversality}
 Let $\pi\colon E \to X$ be a smooth vector bundle, $Y$ a submanifold of $E$, and $\varphi\colon X \to E$ a smooth section of $\pi$.
 Then, given any continuous function $\varepsilon \colon X \to (0, \, +\infty)$, there exists a section $\psi$ of $\pi$ which is transverse to $Y$ and satisfies
 \[
  \norm{\varphi(x) - \psi(x)}_{T_x E} \leq \varepsilon(x)  \qquad \textrm{for all } x\in X .
 \]
 Moreover, if $A\subset X$ is a closed set such that $\varphi_{|A}$ is of class $\mathscr C^1$ and transverse to $Y$, 
 then one can choose $\psi$ so that $\psi_{|A} = \varphi_{|A}$.
 \end{theor}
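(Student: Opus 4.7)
The plan is to deduce the statement from Sard's theorem via the standard \emph{parametric transversality} principle, after reducing the problem to finite dimensions using compactness. Since $X$ is compact, cover it by finitely many open sets $U_1,\ldots,U_N$ over which $E$ is trivialized by diffeomorphisms $\Phi_i\colon \pi^{-1}(U_i)\to U_i\times\R^k$ (with $k$ the rank of $E$), chosen so that the corresponding closed sets $K_i\csubset U_i$ still cover $X$. Let $\{\rho_i\}_{i=1}^N$ be a smooth partition of unity subordinate to $\{U_i\}$. In the relative setting, fix a smooth function $\chi\colon X\to [0,1]$ that vanishes on $A$ and is positive on $X\setminus A$, and replace $\rho_i$ by $\rho_i\chi$; this costs nothing, because by hypothesis $\varphi$ is already $\mathscr C^1$ and transverse to $Y$ on $A$.

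Next, for parameters $s=(s_1,\ldots,s_N)\in S:=(\R^k)^N$, define the perturbed section
\[
 \psi_s(x) \;:=\; \varphi(x) \;+\; \sum_{i=1}^N \rho_i(x)\,\chi(x)\,\Phi_i^{-1}\bigl(x, s_i\bigr)_{\mathrm{fib}},
\]
where $(\,\cdot\,)_{\mathrm{fib}}$ denotes the fiber component in the trivialization (the sum is interpreted using the vector bundle structure). Consider the evaluation map $\mathrm{ev}\colon X\times S\to E$, $(x,s)\mapsto \psi_s(x)$. I claim $\mathrm{ev}$ is transverse to $Y$: for any $(x,s)$ with $\mathrm{ev}(x,s)\in Y$, one of the $\rho_i(x)\chi(x)$ is strictly positive (using $X=\bigcup K_i$ and taking $x\notin A$ if necessary, since on $A$ transversality is assumed), and varying the corresponding $s_i$ sweeps out the entire fiber direction at $\mathrm{ev}(x,s)$, which is precisely the normal space to $Y$ in $E$ at that point.

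Once $\mathrm{ev}\pitchfork Y$ is established, the parametric transversality principle---a direct corollary of Sard's theorem applied to the projection $\mathrm{ev}^{-1}(Y)\to S$---ensures that the set of $s\in S$ for which $\psi_s\pitchfork Y$ has full Lebesgue measure in $S$. To meet the size constraint, observe that $\|\psi_s(x)-\varphi(x)\|_{T_xE}$ is bounded by $C|s|$ for a constant $C$ depending only on the trivializations and the partition of unity. Since $\varepsilon$ is continuous and positive on the compact set $X$, it has a positive minimum; picking a transverse $s$ with $|s|$ smaller than $\min_X\varepsilon/C$ yields the required approximation. By construction $\psi_s=\varphi$ on $A$, completing the relative statement.

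The main technical obstacle is the verification that $\mathrm{ev}\pitchfork Y$ is global: one must check that partial-derivative directions coming from the parameters $s_i$, pulled back through the local trivializations, genuinely surject onto the normal space of $Y$ at each zero of $\psi_s$. This is where the bundle structure enters, and it relies on the observation that at a point where $\psi_s$ vanishes, the tangent space to $Y\subset E$ splits $T_{\mathrm{ev}(x,s)}E$ complementarily to the fiber, so hitting every fiber direction with a parameter perturbation is exactly what transversality requires. Apart from this step (which is essentially linear algebra inside each trivialization), the remainder of the argument is a routine application of Sard's theorem together with a partition-of-unity bookkeeping.
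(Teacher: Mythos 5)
The paper does not prove this statement; it is quoted verbatim as \cite[Theorem~14.6]{BJ} and used as a black box. Your sketch is therefore best compared to the textbook proof, and it is essentially the standard one: trivialize $E$ over a finite cover, perturb $\varphi$ by a parametrized family of local sections weighted by a partition of unity, show the evaluation map is transverse to $Y$, and invoke Sard through the parametric transversality lemma. The use of a smooth $\chi\ge 0$ with $\chi^{-1}(0)=A$ to freeze the section on $A$ is also the standard device for the relative statement; the key point, which you should state explicitly, is that $\d\chi=0$ on $A$ (each point of $A$ is a minimum of $\chi\ge 0$), so that $\d\psi_s=\d\varphi$ on $A$ and transversality there is genuinely inherited, not merely the equality of values.

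There is one place where the justification you give is narrower than the theorem. You argue that the image of the $s$-derivatives of $\mathrm{ev}$ is the vertical (fiber) direction and that this is complementary to $T_{\mathrm{ev}(x,s)}Y$; but that is only true when $Y$ is the zero section, whereas the statement allows $Y$ to be an arbitrary submanifold of $E$, for which the vertical subspace may meet $TY$ nontrivially or even lie inside it (e.g.\ if $Y$ contains a fiber). The fix is simpler and more robust than what you wrote: because each $\psi_s$ is a section, $\d\pi_{\psi_s(x)}\circ\d(\psi_s)_x=\Id_{T_xX}$, so $\d(\psi_s)_x(T_xX)$ is automatically a complement of the vertical subspace in $T_{\mathrm{ev}(x,s)}E$. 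Combined with the observation that the $s$-derivatives span the vertical subspace whenever some $\rho_i(x)\chi(x)>0$, this shows that $\d\,\mathrm{ev}_{(x,s)}$ is in fact \emph{surjective} at every $(x,s)$ with $x\notin A$, hence transverse to \emph{any} submanifold $Y$; on $A$ the $s$-derivatives vanish and transversality reduces to the assumed transversality of $\varphi_{|A}$. With that correction, your argument matches the standard proof underlying the citation and is complete.
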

 
 The smoothness assumption on $\varphi$ is not really a restriction, because every continuous section can be approximated with smooth sections (e.g., working in coordinate charts which trivialize $\pi$).
 Hence, from this theorem we immediately obtain the result we need about vector fields.
 
 \begin{cor} \label{cor: approx field} 
  Let $U$ be an open subset of $X$, and let $v$ be a continuous vector field defined on~$\overline U$. 
  If $v$ satisfies $0 \notin v(\partial U)$, then there exists a transverse field $u$ on $\overline U$, such that
  \begin{align} 
   &  u \text{ has finitely many zeros}, \label{approx finite}\\ 
   & \sup_{x\in \overline U}\abs{v(x) - u(x)} < \inf_{x\in \partial U}\abs{v(x)}.\label{approx field}
  \end{align}
 \end{cor}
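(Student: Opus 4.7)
The strategy is to reduce to Theorem~\ref{th: transversality} by first regularizing $v$ to a smooth field and then transversalizing it, with all approximations controlled by the positive number
\[
 m := \min_{x\in \partial U} \abs{v(x)} > 0 ,
\]
which is well-defined since $\partial U$ is compact and $v(\partial U) \not\ni 0$ by hypothesis.

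The first step is to produce a smooth vector field $\tilde v$ on $X$ which is uniformly close to $v$ on $\overline U$. Using Tietze's extension theorem componentwise, extend $v$ to a continuous $\R^d$-valued map on all of $X$, and then apply the smooth nearest-point projection $P_X$ onto the tangent bundle (which is well-defined on a tubular neighbourhood of $X$ in $\R^d$) to recover the tangency constraint. A standard mollification argument in local trivializations, combined with a finite partition of unity, then yields a smooth tangent field $\tilde v$ on $X$ with
\[
 \sup_{x\in X}\, \abs{\tilde v(x) - v^{\mathrm{ext}}(x)} < m/3 ,
\]
where $v^{\mathrm{ext}}$ is the continuous tangent extension of $v$ to $X$. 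In particular $\sup_{\overline U} |v - \tilde v| < m/3$.

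The second step is to apply Theorem~\ref{th: transversality} to the smooth section of $\pi\colon TX\to X$ associated with $\tilde v$, with the constant tolerance function $\varepsilon \equiv m/3$. This produces a smooth section $u$ of $\pi$ (equivalently, a smooth tangent vector field on $X$) which is transverse to the zero section $Y$ and satisfies $\sup_X |u - \tilde v| < m/3$. Combining the two estimates, the triangle inequality yields
\[
 \sup_{x\in\overline U}\, \abs{v(x) - u(x)} \leq \sup_{\overline U} |v - \tilde v| + \sup_{\overline U} |\tilde v - u| < \frac{2m}{3} < m = \inf_{x\in \partial U} |v(x)| ,
\]
which is exactly~\eqref{approx field}.

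It remains to verify~\eqref{approx finite}. Transversality of $u$ to the zero section translates, in any local chart $f$, into the invertibility of $\d (f_*u)_{f(x)}$ at each zero $x$; by the inverse function theorem each such $x$ is isolated in $u^{-1}(0)$. Since $\overline U$ is compact, the discrete set $u^{-1}(0)\cap\overline U$ is finite. The only minor technical point I foresee — more bookkeeping than genuine obstacle — is making sure the preliminary extension-and-smoothing step preserves the tangency constraint so that the output of Theorem~\ref{th: transversality} is honestly a tangent vector field on $X$; this is handled cleanly by the projection $P_X$, which is smooth and contracts distances, so the final $m/3$ approximation survives the projection step.
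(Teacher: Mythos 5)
Your proof is correct and matches the paper's own argument, which simply observes that a continuous section can be uniformly approximated by a smooth one and then invokes Theorem~\ref{th: transversality}; you have supplied the explicit $m/3$ bookkeeping that the paper leaves implicit. One minor imprecision: the tangential projection $P_X$ is the fiber-wise map $P_X v(x) := \proj_{T_x X} v(x)$ defined for every $x\in X$, so no tubular neighbourhood of $X$ in $\R^d$ is needed --- that device would be required for the nearest-point projection onto $X$ itself, not onto its tangent spaces.
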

 
 Now we can define the index of an arbitrary field.
 
 \begin{defn} \label{def: cont index}
  Let $v$ be a continuous vector field on $U$, such that $0\notin v(\partial U)$. If $v$ is transverse, we define $\ind(v, \, U)$ by formula~\eqref{transv index}.
  Otherwise, we define 
  \[
   \ind(v, \, U) := \ind(u, \, U),
  \]
  where $u$ is any transverse field satisfying~\eqref{approx field}.
 \end{defn}
 
The well-posedness of this definition follows from the homotopy invariance of the index for transverse vector fields, and can be proved by arguing exactly as for Corollary~\ref{cor: stability}.

 The definition of index closely resembles Brouwer's construction of the degree. This similarity is not coincidental.
 Indeed, as we mentioned in the Introduction, an equivalent way of making sense of the index for an arbitrary continuous field is to define it as the degree of an appropriate map.
 
 \begin{remark} \label{remark: tubular}
  More precisely, consider a tubular neighbourhood $M\subset \R^d$ of the manifold $X$, i.e., an open neighbourhood of $X$ in $\R^d$ such that any point
  $y\in M$ can be uniquely decomposed as $y = x + \nu$, where $x\in X$ and $\nu$ is orthogonal to $T_x X$.
  Let $\tau\colon M \to X$ be the map given by $y \mapsto x$, which is smooth if $M$ is small enough. 
  Consider the normal extension of $v$, that is, the continuous function $w\colon M \to \R^d$ given by
  \[
  w (y) := v(\tau(y)) + y - \tau(y) \qquad \textrm{for all } y\in M .
  \]
  Then, we can set
  \begin{equation} \label{ind-deg}
  \ind(v, \, U):= \deg(w, \, \tau^{-1}(U), \, 0) .
  \end{equation}
  It is not hard to see that this quantity coincides with the index in the sense of Definition~\ref{def: cont index}.
  Actually, by means of Brezis and Nirenberg degree theory, the right-hand side in this formula makes sense when $v$ is just $\VMO$ (and satisfies a suitable nonvanishing condition near the boundary).
  Thus, one could consider taking~\eqref{ind-deg} as a general definition of index.
  However, for a $\VMO$ field $v$ this approach does not allow to define the quantity $\ind_-(P_{\partial N}v, \, \partial_- N[v])$, which occurs in Morse's formula, because $\partial_-N [v]$ may not be open.
  Henceforth, one would still have to consider continuous fields at first, then take care of the $\VMO$ case by an approximation procedure.
 \end{remark}
 
Due to this strong link between the index and the degree, it is not surprising that some important properties of the degree have a counterpart for the index.
 We collect them in the next Propositions, 
 leaving the proofs to the reader.
 The first property we consider here is excision.

  \begin{prop}[Excision] \label{prop: cont excision}
  Let $U_1 \subset U$, $U_2 \subset U$ be two disjoint open sets in $X$, and let $v$ be a continuous vector field on $X$. If $0 \notin v(\overline U \setminus (U_1\cup U_2))$, then
  \[
   \ind(v, \, U) = \ind(v, \, U_1) + \ind(v, \, U_2).
  \]
 \end{prop}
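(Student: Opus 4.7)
The plan is to reduce to the transverse case via Corollary~\ref{cor: approx field}, where excision follows directly from the zero-counting formula~\eqref{transv index-app}.

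First, I would set $K := \overline U \setminus (U_1 \cup U_2)$, which is compact (as $U_1, U_2$ are open), and define
\[
 m := \min\left\{ \inf_{x\in K} \abs{v(x)}, \, \inf_{x\in\partial U_1}\abs{v(x)}, \, \inf_{x\in\partial U_2}\abs{v(x)} \right\} > 0,
\]
where the positivity uses the hypothesis $0\notin v(K)$ (and observes $\partial U_1, \partial U_2\subset K$ since $U_1, U_2$ are disjoint open subsets of $U$, so their boundaries lie in $\overline U \setminus (U_1\cup U_2)$). By Corollary~\ref{cor: approx field}, applied on $\overline U$, I can find a transverse vector field $u$ on $\overline U$ with finitely many zeros such that
\[
 \sup_{x\in\overline U} \abs{v(x) - u(x)} < m .
\]

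Next, I would observe that this choice forces all zeros of $u$ in $\overline U$ to lie in $U_1 \cup U_2$: if $x\in K$, then $\abs{u(x)} \geq \abs{v(x)} - \abs{v(x) - u(x)} > 0$. Moreover, the same estimate applied at points of $\partial U$, $\partial U_1$, $\partial U_2$ shows that $u$ satisfies the non-vanishing boundary condition on each of $\partial U$, $\partial U_1$, $\partial U_2$, so Definition~\ref{def: cont index} yields
\[
 \ind(v, \, U) = \ind(u, \, U), \quad \ind(v, \, U_1) = \ind(u, \, U_1), \quad \ind(v, \, U_2) = \ind(u, \, U_2) .
\]

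Finally, I would apply the explicit formula~\eqref{transv index-app} for the transverse field $u$. Since $u^{-1}(0)\cap U \subset U_1 \cup U_2$ and the sets $U_1$, $U_2$ are disjoint, the sum splits:
\[
 \ind(u, \, U) = \sum_{x\in u^{-1}(0)\cap U_1} \mathrm{sign}\det \d(f_*u)_{f(x)} + \sum_{x\in u^{-1}(0)\cap U_2} \mathrm{sign}\det \d(f_*u)_{f(x)} = \ind(u, \, U_1) + \ind(u, \, U_2),
\]
and combining with the previous identities gives the claim. There is no real obstacle here; the only point requiring a bit of care is verifying that $\partial U_1, \partial U_2 \subset K$ so that a \emph{single} choice of $u$ simultaneously approximates $v$ well enough to compute all three indices, but this is immediate from $U_1, U_2 \subset U$ being disjoint and open.
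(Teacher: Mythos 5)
Your proof is correct, and it is the natural argument. The paper itself leaves the proof of this excision property to the reader, so there is no in-paper argument to compare against; the route you take --- pass to a single transverse approximant $u$ of $v$ and then split the finite zero-counting sum in \eqref{transv index-app} --- is exactly the one the appendix's framework is designed to support. The key observation that $\partial U_1,\partial U_2\subset K:=\overline U\setminus(U_1\cup U_2)$ (because $U_1,U_2$ are disjoint and open), so a single approximation threshold controls all three indices simultaneously, is the right thing to isolate.

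One small imprecision: Corollary~\ref{cor: approx field} as stated only guarantees $\sup_{\overline U}\abs{v-u}<\inf_{\partial U}\abs{v}$, whereas you need the stronger bound $\sup_{\overline U}\abs{v-u}<m$ with $m=\inf_{K}\abs{v}$; since $\partial U\subset K$, one has $m\leq\inf_{\partial U}\abs{v}$ and the inequality may be strict, so the corollary's conclusion need not suffice. The fix is immediate --- invoke Theorem~\ref{th: transversality} directly with a constant $\varepsilon<m$ (after first smoothing $v$, as in the corollary's own derivation), which does yield a transverse $u$ with $\sup_{\overline U}\abs{v-u}<m$. With that adjustment, every step of your argument (the three applications of Definition~\ref{def: cont index} and the splitting of the sum over $u^{-1}(0)\cap U=(u^{-1}(0)\cap U_1)\sqcup(u^{-1}(0)\cap U_2)$) is sound.
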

 
 The second property is the invariance of the index under a continuous homotopy. 
 We state a first version of this principle, in which we allow both the vector field and the underlying domain to vary continuously. 
 The proof is analogous to~\cite{Samelson}.
 
 \begin{prop}[General homotopy principle] \label{prop: general homotopy}
 Let $\{M_t\}_{0 \leq t \leq 1}$ be a family of compact, oriented $n$-manifolds in $\R^d$, without boundary, such that the set
 \[
  M := \coprod_{0 \leq t \leq 1} M_t \times \{t\}
 \]
 is a $(n + 1)$-submanifold of $\R^d\times[0, \, 1]$.
 Let $V$ be an open,  connected subset of $M$, and set $V_t := V \cap (\R^d \times \{ t \})$.
 Let $v\colon \overline V \to \R^d$ be a continuous map such that, for each $0 \leq t \leq 1$,
 \begin{itemize}
  \item[(i)] $v(\cdot, \, t)$ is a tangent field to $M_t$, and
  \item[(ii)] $0 \notin v(\partial V_t)$.
 \end{itemize}
 Then, for any $0 \leq t_1 , \, t_2 \leq 1$ such that $V_{t_1}\neq \emptyset$, $V_{t_2}\neq \emptyset$, we have
 \[
  \ind(v(\cdot, \, t_1), \, V_{t_1}) = \ind(v(\cdot, \, t_2), \, V_{t_2}) .
 \]
 \end{prop}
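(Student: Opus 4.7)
The plan is to reduce to a smooth, parametrically transverse setting so that the zero set of the field becomes a compact $1$-dimensional cobordism between the zero sets on the two slices, and then to run the standard signed cobordism argument.

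\textbf{Step 1: reduction to smooth transverse approximants on the slices.} Assume without loss of generality that $t_1 < t_2$ and restrict the whole picture to $t\in [t_1, t_2]$. By Corollary~\ref{cor: approx field} applied on $\overline{V_{t_1}}$ and on $\overline{V_{t_2}}$, fix transverse smooth tangent vector fields $u_i$ on $\overline{V_{t_i}}$ with $\sup_{\overline{V_{t_i}}}|u_i - v(\cdot, t_i)|$ smaller than $\tfrac{1}{4}\inf_{\partial V_{t_i}}|v(\cdot, t_i)|$, so that Corollary~\ref{cor: stability} gives $\ind(u_i, V_{t_i}) = \ind(v(\cdot, t_i), V_{t_i})$.

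\textbf{Step 2: parametric transversality.} The family $\{M_t\}$ being a smooth $(n+1)$-submanifold, the assignment $(x,t)\mapsto T_x M_t$ is a smooth rank-$n$ subbundle $E\subset TM$, and the map $v$ is a continuous section of $E$ over $\overline V\cap(\R^d\times[t_1,t_2])$. Smooth out $v$ in the interior, prescribe it to coincide with $u_i$ on a closed neighbourhood of $V_{t_i}$ (where $u_i$ is already smooth and transverse to the zero section of $E$ restricted to that slice), and invoke the relative form of Theorem~\ref{th: transversality} to obtain a smooth section $\tilde v$ of $E$, transverse to the zero section, equal to $u_i$ on $V_{t_i}$, and as close to $v$ as we wish in the uniform norm. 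By choosing the approximation fine enough, using the hypothesis $0\notin v(\partial V_t)$ together with uniform continuity of $v$, we also get $0\notin \tilde v$ on a neighbourhood of $\bigcup_{t\in[t_1,t_2]}\partial V_t$.

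\textbf{Step 3: the zero set as a compact $1$-manifold.} Since $\tilde v$ is transverse to the zero section of the rank-$n$ bundle $E$ over an $(n+1)$-manifold, the zero set $Z:=\tilde v^{-1}(0)\subset \overline V\cap(\R^d\times[t_1,t_2])$ is a smooth $1$-dimensional submanifold of the interior, compact by Step~2, with
\[
\partial Z \subset V_{t_1}\cup V_{t_2},
\]
and in fact $\partial Z = u_1^{-1}(0)\cup u_2^{-1}(0)$.

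\textbf{Step 4: signed cobordism.} Orient $M$ by the orientations of $M_t$ together with the standard orientation of $[0,1]$. A standard local computation (in a chart trivialising $E$) shows that at a transverse zero $x_0\in V_{t_i}$ of $u_i$, the sign $\varepsilon(x_0):=\mathrm{sign}\det d(f_* u_i)_{f(x_0)}$ entering~\eqref{transv index-app} equals $\pm 1$ according as the natural orientation of $Z$ at $x_0$ points into or out of the slab $\R^d\times[t_1,t_2]$; the sign flip for the two slices is captured by the fact that $V_{t_1}$ is the ``incoming'' and $V_{t_2}$ the ``outgoing'' boundary. Each component of $Z$ is either a circle (contributing nothing to $\partial Z$) or an arc with two endpoints; along any such arc the orientation is coherent, so the two endpoints of the arc contribute with opposite boundary orientations. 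Summing over components yields
\[
\sum_{x\in u_1^{-1}(0)}\varepsilon(x) \;=\; \sum_{x\in u_2^{-1}(0)}\varepsilon(x),
\]
which is precisely $\ind(u_1, V_{t_1})=\ind(u_2, V_{t_2})$. Combined with Step~1 this is the desired equality.

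\textbf{Main obstacle.} The only delicate point is the orientation bookkeeping in Step~4: one must identify the sign $\mathrm{sign}\det d(f_*u_i)$ used in~\eqref{transv index-app} with the sign prescribed on $\partial Z$ by a coherent orientation of $Z$, with opposite conventions on $V_{t_1}$ and $V_{t_2}$. Everything else is a routine application of Theorem~\ref{th: transversality} (in its relative form), Corollary~\ref{cor: approx field}, and Corollary~\ref{cor: stability}.
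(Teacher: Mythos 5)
The paper does not actually present a proof of Proposition~\ref{prop: general homotopy}; it only remarks that the argument is ``analogous to~\cite{Samelson}''. Your cobordism plan is the natural implementation of that idea, but Step~3 contains a genuine gap, and it traces back to an imprecision in the statement that you should not gloss over. To conclude $\partial Z \subset V_{t_1}\cup V_{t_2}$ you need $\tilde v$ to be nonvanishing on the whole lateral boundary $(\overline V\setminus V)\cap(\R^d\times(t_1,t_2))$. Hypothesis~(ii), $0\notin v(\partial V_t)$, only gives you control on $\bigcup_t \partial V_t\times\{t\}$, where $\partial V_t$ is the boundary of $V_t$ inside the slice $M_t$, and this set can be a \emph{proper} subset of $\overline V\setminus V$ (a point of $\overline V\setminus V$ at level $t$ need not be a limit of points of $V_t$, only of points of $V_{t'}$ for $t'\to t$). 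A zero of $\tilde v$ can therefore escape sideways, and then $\partial Z \neq u_1^{-1}(0)\cup u_2^{-1}(0)$, which ruins the signed count.

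This is not a technicality you can wave away; with the literal reading of the hypotheses the conclusion itself can fail. Take $M=\S^1\times[0,1]$, $V=\bigl((-\pi/2,\pi/2)\times[0,1/2)\bigr)\cup\bigl((0,\pi/2)\times[0,1]\bigr)$, which is open and connected, and the tangent field $v(\theta,t)=\sin(\theta+\pi/4+t-1/2)\,e(\theta)$ on $\overline V$, where $e(\theta)$ is the unit tangent to $\S^1$. One checks $0\notin v(\partial V_t)$ for every $t\in[0,1]$, yet $\ind(v(\cdot,0),V_0)=1$ while $\ind(v(\cdot,1),V_1)=0$: the zero of $v$ slides to $(-\pi/4,1/2)\in\overline V\setminus V$, a point belonging to no $\partial V_t$, and leaves the domain through the lateral boundary. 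To make both the statement and your Step~3 correct, strengthen (ii) to $0\notin v(\overline V\setminus V)$, or note that in the only place the paper uses this result --- Corollary~\ref{cor: stability}, where $V_t$ is a fixed open set $U$ --- one has $\overline V\setminus V=\partial U\times[0,1]=\bigcup_t\partial V_t\times\{t\}$ so the issue does not arise. A further, smaller point: Theorem~\ref{th: transversality} is stated for a compact $X$ without boundary, whereas in Step~2 you invoke it on the slab $M\cap(\R^d\times[t_1,t_2])$, which has boundary; a word is needed (pass to the double, or use a relative version of transversality that respects the boundary slices).
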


 In case the domain is fixed, from this general principle we can derive the stability of the index with respect to small perturbations of the fields. 
 
 \begin{cor}[Stability] \label{cor: stability}
 Let $v_0, \, v_1$ be two continuous vector fields on $\overline U$, satisfying $0\notin v_0(\partial U)$, $0 \notin v_1(\partial U)$. If
 \begin{equation} \label{smallpert}
   \abs{v_0(x) - v_1(x)} < \abs{v_0(x)} \qquad \textrm{for all } x\in \partial U ,
 \end{equation}
 then $\ind(v_0, \, U) = \ind(v_1, \, U)$.
 \end{cor}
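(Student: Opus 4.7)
The plan is to deduce the stability corollary from the general homotopy principle (Proposition~\ref{prop: general homotopy}) by interpolating linearly between $v_0$ and $v_1$. Concretely, I would take the constant family $M_t := X$ for $t \in [0, 1]$, set $V := U \times [0, 1]$ (an open connected subset of $M = X \times [0,1]$), and define the continuous map $v \colon \overline V \to \R^d$ by
\[
 v(x, \, t) := (1 - t) v_0(x) + t \, v_1(x) .
\]
Since each fiber $T_xX$ is a linear subspace and $v_0(x), v_1(x) \in T_x X$, the convex combination $v(\cdot, t)$ is again a tangent vector field on $X$, so hypothesis (i) of Proposition~\ref{prop: general homotopy} holds.

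The main (and essentially only) step is to verify hypothesis (ii), namely $0 \notin v(\partial V_t) = v(\partial U \times \{t\})$ for all $t \in [0, 1]$. Fix $x \in \partial U$ and $t\in[0,1]$; if by contradiction $v(x, t) = 0$, then $v_0(x) = t\,(v_0(x) - v_1(x))$, whence
\[
 \abs{v_0(x)} = t\,\abs{v_0(x) - v_1(x)} \leq \abs{v_0(x) - v_1(x)} ,
\]
contradicting the strict inequality~\eqref{smallpert} (note the case $t = 0$ is excluded since then $v(x, 0) = v_0(x) \neq 0$ by assumption). Once (i) and (ii) are established, Proposition~\ref{prop: general homotopy} applied to $t_1 = 0$ and $t_2 = 1$ yields $\ind(v_0, U) = \ind(v(\cdot, 0), V_0) = \ind(v(\cdot, 1), V_1) = \ind(v_1, U)$, which is the desired conclusion.

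The only delicate point is the non-vanishing of the straight-line homotopy on $\partial U$; the algebraic manipulation above is routine, and no approximation by transverse fields is needed at this stage because Proposition~\ref{prop: general homotopy} is already stated for arbitrary continuous fields. Consequently the proof reduces to this short one-line estimate, and the stability corollary follows with no further obstacle.
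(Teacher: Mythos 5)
Your proof is correct and follows exactly the route the paper intends: the text asserts, without spelling it out, that the corollary follows from Proposition~\ref{prop: general homotopy} once the domain is held fixed, and your linear interpolation $v(x,t) = (1-t)v_0(x) + t v_1(x)$ together with the one-line estimate showing $v(\cdot,t)$ never vanishes on $\partial U$ is precisely the intended derivation. The only point worth being pedantic about is that Proposition~\ref{prop: general homotopy} is stated for $V$ connected; if $U$ fails to be connected one simply applies the proposition component by component and sums the indices, a remark which does not affect the substance of your argument.
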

 

An important consequence of Corollary~\ref{cor: stability} is that all the continuous vector fields have the same index on $X$.
This agrees with the Poincar\'e-Hopf formula, which yields $\ind(v, \, X) = \chi(X)$.
 
\medskip
In view of the previous discussion, we can give a meaning to the quantities defined in Subsection~\ref{subsect: cont index} for any continuous field~$v$ satisfying~$0 \notin v(\partial N)$.
To define the index of~$v$, we apply Definition~\ref{def: cont index}, taking as~$X$ the topological double of~$N$ and~$U = N\setminus\partial N$. 
To define the inward boundary index, we apply Definition~\ref{def: cont index} to the vector field~$P_{\partial N}v$ and take~$X = \partial N$, $U = \partial_- N[v]$.

We conclude our discussion by giving the proofs of Propositions~\ref{prop: stability} and~\ref{prop: continuous index formula}.

\begin{proof}[Proof of Proposition~\ref{prop: stability}]
Let $v$, $w$ be two continuous fields on~$N$, satisfying~$0\notin v(\partial N)$, $0\notin w(\partial N)$ and
\begin{equation} \label{inw stability}
  \norm{v - w}_{\mathscr C(\partial N)} < \varepsilon_1 : = \frac{\sqrt 5 -1}{4}\min_{\partial N}{|v|}.
\end{equation}
First of all, note that since  $\varepsilon_1 < \min_{\partial N}|v|$, Corollary~\ref{cor: stability} applies and so~$\ind(v, \, N) = \ind(w, \, N)$. 
Therefore, it only remains to prove that the boundary indices of~$v$ and~$w$ agree.

For the sake of simplicity, set~$c := \min_{\partial N}|v| >0$. Due to~\eqref{inw stability}, we deduce
 \begin{equation} \label{inw stability 1}
  \abs{\frac{v(x) \cdot \nu(x)}{\abs{v(x)}} - \frac{w(x)\cdot \nu(x)}{\abs{w(x)}}} \leq \frac{2\varepsilon_1}{c -  \varepsilon_1} \qquad \textrm{for all } x\in \partial N .
 \end{equation}
 Indeed, for a fixed $x\in \partial N$ we suppose, e.g.,  that $\abs{w(x)} \leq \abs{v(x)}$. Then
\[
\begin{split}
  \abs{\frac{v(x) \cdot \nu(x)}{\abs{v(x)}} - \frac{w(x)\cdot \nu(x)}{\abs{w(x)}}} 
  &\leq \abs{\frac{v(x) \cdot \nu(x)}{\abs{v(x)}} - \frac{v(x)\cdot \nu(x)}{\abs{w(x)}}} + \abs{\frac{v(x) \cdot \nu(x)}{\abs{w(x)}} - \frac{w(x)\cdot \nu(x)}{\abs{w(x)}}} \\
  &\leq \abs{v(x)}\left(\frac{1}{\abs{w(x)}} - \frac{1}{\abs{v(x)}} \right) + \frac{\abs{v(x) - w(x) }}{\abs{w(x)}}   \\
  &=\frac{\abs{v(x)} - \abs{w(x)}}{\abs{w(x)}} + \frac{\abs{v(x) - w(x) }}{\abs{w(x)}} \\
  &\leq 2 \frac{\abs{v(x) - w(x) }}{\abs{w(x)}} ,
\end{split}
 \]
 whence the desired inequality~\eqref{inw stability 1}. Thus, setting
 \[
   U_+ := \left\{ x\in \partial N \colon \frac{w(x)\cdot\nu(x)}{\abs{w(x)}} < \frac{2\varepsilon_1}{c -  \varepsilon_1} \right\}
 \]
 and
 \[
   U_- := \left\{ x\in \partial N \colon \frac{w(x)\cdot\nu(x)}{\abs{w(x)}} < -\frac{2\varepsilon_1}{c -  \varepsilon_1} \right\} ,
 \]
from~\eqref{inw stability 1} it follows that 
 \[
   U_- \subset \partial_- N[v] \subset U_+ \qquad \textrm{and} \qquad \partial(\partial_- N[v]) \subset \overline{U_+} \setminus U_- .
 \]
 Moreover, for all $x\in \overline{U_+} \setminus U_-$ the conditions~\eqref{inw stability} and~\eqref{inw stability 1} imply
 \begin{equation} \label{inw stability 2}
  \abs{P_{\partial N} w(x)} \geq \abs{w(x)} \sqrt{1 - \frac{4\varepsilon_1^2}{(c - \varepsilon_1)^2}} \geq \sqrt{(c - \varepsilon_1)^2 - 4\varepsilon_1^2}.
 \end{equation}
 By definition, $\varepsilon_1$ is a solution to
 \[
  \varepsilon_1 = \sqrt{(c - \varepsilon_1)^2 - 4\varepsilon_1^2} 
 \]
 and so, in $\overline{ U_+}\setminus U_{-}$ there holds 
 \[
	 |P_{\partial N}v - P_{\partial N}w|\leq |v-w| \stackrel{\eqref{inw stability}}{<} \sqrt{(c - \varepsilon_1)^2 - 4\varepsilon_1^2} \stackrel{\eqref{inw stability 2}}{\leq} |P_{\partial N}w|. 
 \]
The condition~\eqref{smallpert} is thus satisfied, so that we can apply Corollary~\ref{cor: stability} to $P_{\partial N}v, \, P_{\partial N}w$, to infer
 \[
   \ind_- (v, \, \partial N) = \ind(P_{\partial N}v, \, \partial_- N[v]) = \ind(P_{\partial N}w, \, \partial_- N[v]) .
 \]
On the other hand, by~\eqref{inw stability 2} there is no zero of $P_{\partial N} w$ in the region $\overline{U_+}\setminus U_-$, 
which contains the symmetric difference between $\partial_- N[v]$ and $\partial_- N[w]$. 
Hence, Proposition~\ref{prop: cont excision} gives
 \[
  \ind(P_{\partial N}w, \, \partial_- N[v]) = \ind(P_{\partial N}w, \, \partial_- N[w]) = \ind_- (w, \, \partial N).
 \]
This concludes the proof.
\end{proof}
 
 We can now prove Proposition~\ref{prop: continuous index formula}.
 This result can be obtained combining the classical Morse's index formula (Proposition~\ref{prop: morse index}) with~\cite[$\Gamma$-existence Theorem]{Pugh}, 
 which provides a transverse vector field~$v$ such that~$P_{\partial N} v$ is also transverse.
 However, for the convenience of the reader, we present here a proof which does not rely on the results in~\cite{Pugh}.
%
 \begin{proof}[Proof of Proposition~\ref{prop: continuous index formula}]
 We show that it is possible to approximate both~$v$ and~$P_{\partial N} v$ using the same transverse field~$u$. 
 Then, the proposition will follow by applying the classical Morse's formula to~$u$.

 Owning to the continuity of~$v$, we find a number~$c > 0$ and a neighbourhood~$U$ of~$\partial N$ in~$N$ such that
\begin{equation} \label{morse 1}
 \abs{v(x)} \geq c \qquad \textrm{for all } x\in U.
\end{equation}
Let~$\varepsilon > 0$ be a small parameter, to be chosen later.
We fix a smooth vector field~$\tilde v$ on~$N$ such that
\begin{equation} \label{morse 5}
 \norm{v - \tilde v}_{\mathscr C(N)} \leq \varepsilon .
\end{equation}
Then, by Theorem~\ref{th: transversality}, we approximate~$P_{\partial N}\tilde v$ with a transverse vector field~$\xi$ on~$\partial N$, such that~$\xi$ has finitely many zeros on~$\partial N$ and
\begin{equation} \label{morse 6}
 \norm{P_{\partial N}\tilde v - \xi}_{\mathscr C(\partial N)} \leq \varepsilon.
\end{equation}
We claim that there exists a continuous vector field~$w$ on~$N$, which is smooth on~$U$, satisfies
\[
  w = \begin{cases}
       \xi + \tilde v - P_{\partial N} \tilde v \qquad &\textrm{on } \partial N \\
       \tilde v                                     \qquad &\textrm{on } N \setminus U
      \end{cases}
\]
and 
\begin{equation} \label{morse 3}
 \norm{v - w}_{\mathscr C(N)} \leq C\varepsilon ,
\end{equation}
for some constant~$C$ depending only on~$N$.
(Remark that the prescribed boundary value for~$w$ is compatible with the condition~\eqref{morse 3}, as it follows from~\eqref{morse 5} and~\eqref{morse 6}).
We are giving the details of this construction in a moment, but first, we show how to conclude the proof.

By construction,~$w_{|\partial N}$ is a smooth function and 
$w(x)\in T_x N$ for any~$x\in N$. For~$\varepsilon$ small enough,~\eqref{morse 3} and 
Proposition~\ref{prop: stability} entail that
\begin{equation} \label{morse 2}
 \ind_-(v, \, \partial N) = \ind_- (w, \, \partial N) .
\end{equation}

Take~$\varepsilon < c/C$. Then,~\eqref{morse 1} and~\eqref{morse 3} together imply that~$w$ does not vanish on~$U$.
In particular,~$w$ is vacuously transverse on~$U$.
Using Theorem~\ref{th: transversality}, we modify~$w$ out of~$U$ to get a transverse vector field~$u$, such that~$u_{|U} = w_{|U}$.
As~$u$ can be taken arbitrarily close to~$w$ in the~$\mathscr C$-norm, we can assume that~\eqref{approx field} is satisfied.
Hence,
\begin{equation} \label{morse 4}
 \ind(v, \, N) = \ind(u, \,  N) .
\end{equation}
Since~$u$ is a transverse field, with finitely many zeros, Morse's identity applies to~$u$. 
Then, using~\eqref{morse 2} and~\eqref{morse 4}, the proposition follows.

 Now, let us explain how to construct the map~$w$.
 Taking a smaller~$U$ if necessary, we can assume that~$U$ is a collar of~$\partial N$.
 This means,~$U$ is of the form
 \[
  U = \left\{x\in N \colon \dist(x, \, \partial N) \leq \delta \right\}
 \]
 for some~$\delta > 0$, each point~$x\in U$ has a unique nearest projection~$\sigma(x)\in \partial N$, and the mapping~$\varphi$ given by
 \[
  \varphi(x) := (\sigma(x), \abs{x - \sigma(x)}) \qquad \textrm{for } x\in U
 \]
 is a diffeormorphism~$U \to \partial N \times [0, \, \delta]$.
 For each~$x\in U$, the differential~$d\varphi_x$ is an isomorphism
 \[
  T_x N \simeq T_{\sigma(x)} \partial N \oplus \R ,
 \]
 so~$T_xN$ can be decomposed into a tangential and a normal subspace, with respect to~$\partial N$.
 To keep the notation simple, we assume here that~$U = \partial N \times [0, \, \delta]$, and~$\varphi = \Id_U$.
 
 To define~$w$, we interpolate linearly between~$\xi$ and the tangential component of~$\tilde v$, but we leave the normal component of~$\tilde v$ unchanged.
 More precisely, given~$x = (y, \, t)\in \partial N \times [0, \, \delta]$ we define 
\begin{align*}
  w(x) &:= \left(1 - \frac{t}{\delta}\right)\xi(y) + \frac{t}{\delta} P_{\partial N} \tilde v(x) + \tilde v(x) - P_{\partial N} \tilde v(x)\\
       &\,= \left(1 - \frac{t}{\delta}\right)\Big(\xi(y) - P_{\partial N} \tilde v(x)\Big) + \tilde v(x) 
 \end{align*}
 whereas we set
 \[
  w(x) := \tilde v(x) \qquad \textrm{for } x\in N \setminus U . 
 \]
 Then~$w$ is of class~$\mathscr C^1$ on~$U$, continuous on~$N$, satisfies~$w = \xi + \tilde v - P_{\partial N} \tilde v$ on~$\partial N$.
 Moreover, for~$x = (y, \, t)\in U$ we have
 \[
 \begin{split}
  \abs{\tilde v(x) - w(x)} & \leq \left(1 - \frac{t}{\delta}\right)\big|\xi(y) -  P_{\partial N} \tilde v(x)\big|\\
  		&\leq \left(1 - \frac{t}{\delta}\right) \left(\big|\xi(y) -  P_{\partial N} \tilde v(y,0)\big| 
			+ \big| P_{\partial N} \tilde v(y,0)-P_{\partial N} \tilde v(y,t)\big|\right)\\
		&\stackrel{\eqref{morse 6}}{\leq} \left(1 - \frac{t}{\delta}\right)\varepsilon +t\left(1 - \frac{t}{\delta}\right)
			 \text{Lip}_U(P_{\partial N} \tilde v)\\
		&\stackrel{\eqref{morse 6}}{\leq} \varepsilon +\delta C .
  \end{split}
 \]
 By choosing~$\delta$ small, and combining this inequality with~\eqref{morse 5}, we deduce~\eqref{morse 3}.
\end{proof}

\bibliographystyle{acm}
\bibliography{Index}

\end{document}